\newtheorem{theorem}{Theorem}[section]
\newtheorem{lemma}[theorem]{Lemma}
\newtheorem{cor}[theorem]{Corollary}
\theoremstyle{definition}
\newtheorem{defn}[theorem]{Definition}
\newtheorem{remark}[theorem]{Remark}
\newtheorem{hypothesis}[theorem]{Hypothesis}
\numberwithin{equation}{theorem}
\newcommand{\CC}{\mathbb{C}}
\newcommand{\FF}{\mathbb{F}}
\newcommand{\QQ}{\mathbb{Q}}
\newcommand{\RR}{\mathbb{R}}
\newcommand{\ZZ}{\mathbb{Z}}
\newcommand{\calE}{\mathcal{E}}
\newcommand{\calF}{\mathcal{F}}
\newcommand{\calH}{\mathcal{H}}
\newcommand{\calL}{\mathcal{L}}
\newcommand{\calM}{\mathcal{M}}
\newcommand{\calO}{\mathcal{O}}
\newcommand{\calR}{\mathcal{R}}
\newcommand{\frako}{\mathfrak{o}}
\newcommand{\bv}{\mathbf{v}}
\DeclareMathOperator{\bd}{bd}
\DeclareMathOperator{\et}{et}
\DeclareMathOperator{\FEt}{\mathbf{FEt}}
\DeclareMathOperator{\GL}{GL}
\DeclareMathOperator{\Gr}{Gr}
\DeclareMathOperator{\Hom}{Hom}
\DeclareMathOperator{\Pfd}{\mathbf{Pfd}}
\DeclareMathOperator{\prof}{prof}
\DeclareMathOperator{\rank}{rank}
\DeclareMathOperator{\Spa}{Spa}
\DeclareMathOperator{\Spd}{Spd}
\DeclareMathOperator{\Spec}{Spec}
\begin{document}

\title{Simple connectivity of Fargues--Fontaine curves}
\thanks{The author was supported by NSF (grants DMS-1501214, DMS-1802161),  UC San Diego (Warschawski Professorship),
and IAS (Visiting Professorship 2018--2019).}
\author{Kiran S. Kedlaya}
\date{December 18, 2020}

\begin{abstract}
We show that the Fargues--Fontaine curve associated to an algebraically closed field of characteristic $p$
is geometrically simply connected; that is, its base extension from $\QQ_p$ to any complete algebraically closed overfield admits no nontrivial connected finite \'etale covering. 
We then deduce from this an analogue for perfectoid spaces (and some related objects)
of Drinfeld's lemma on the fundamental group of a product of schemes in characteristic $p$.
\end{abstract}

\maketitle

Let $F$ be an algebraically closed field of characteristic $p$ which is complete with respect to a nontrivial multiplicative norm. The construction of Fargues--Fontaine \cite{fargues-fontaine} associates to $F$ a pair of geometric objects, a scheme over $\QQ_p$ and an adic space over $\QQ_p$ in the sense of Huber, which together play a central role in $p$-adic Hodge theory.
The two spaces are related by a morphism from the adic space to the scheme which has some formal features of an analytification map, such as a form of the GAGA principle for coherent sheaves.

Various aspects of the geometry of the spaces constructed by Fargues--Fontaine justifies their use of the term
\emph{curves} with reference to these spaces. 
At the level of local geometry, the scheme is noetherian and regular of dimension 1, while (as shown by the present author \cite{kedlaya-noetherian})
the adic space is noetherian and admits a neighborhood basis consisting of the adic spectra of principal ideal domains.
At the level of global geometry, if one defines the \emph{degree} of an effective divisor to be its length as a scheme, then every principal divisor has degree 0 and the degree map defines a bijection of the Picard group with $\ZZ$.
One also has an interpretation of $p$-adic representations of the Galois group of $F$ over a subfield in terms of vector bundles over a corresponding quotient of the curve, in analogy with the theorem of Narasimhan--Seshadri \cite{narasimhan-seshadri} on unitary representations of the fundamental group of a compact Riemann surface.

Going further, one finds indications that the correct analogy is not with arbitrary curves, but with curves of genus 0.
Most notably, the celebrated theorem of Grothendieck that every vector bundle on a projective line splits into line bundles \cite{grothendieck-vb} has a close analogue for Fargues--Fontaine curves, on which every vector bundle splits into summands each of which is the pushforward of a line bundle along a finite \'etale morphism. (This is materially a reformulation of a prior result of the present author \cite{kedlaya-annals, kedlaya-revisited}, but with a much more transparent statement and an independent proof.)
Using this result, Fargues--Fontaine showed (see Lemma~\ref{L:splitting cover discrete})
that the curves satisfy a form of geometric simple connectivity: if one performs a base extension from $\QQ_p$ to a completed algebraic closure $\CC_p$, then the profinite fundamental group of the curve becomes trivial. (A similar observation had previously been made by Weinstein \cite{weinstein-gqp}.) 

The purpose of this paper is to extend the result of Fargues--Fontaine by establishing the following result.
\begin{theorem}\label{T:simple connectivity}
The Fargues--Fontaine curve associated to $F$ is \emph{geometrically simply connected}:
for any complete algebraically closed overfield $K$ of $\mathbb{Q}_p$, the base extension of the curve from $\QQ_p$ to $K$ admits no nontrivial connected finite \'etale covering.
\end{theorem}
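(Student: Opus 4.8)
The plan is to deduce the general case from the case $K=\CC_p$, which is Lemma~\ref{L:splitting cover discrete}, by two limit--descent steps followed by a semicontinuity argument. I will work with the scheme version of the curve; this loses nothing, since by the GAGA principle a finite \'etale algebra over the scheme is the same as a coherent sheaf of algebras on the adic curve that is finite \'etale, so the two curves have equivalent categories of finite \'etale coverings. Write $X$ for the scheme over $\QQ_p$ associated to $F$. Any complete algebraically closed overfield $K$ of $\QQ_p$ contains the closure of $\overline{\QQ_p}$, which is (isometrically) $\CC_p$, so I may assume $\CC_p\subseteq K$; let $g\colon Y\to X_K:=X\times_{\QQ_p}K$ be a connected finite \'etale covering of degree $d$, and let us force $d=1$.

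First I descend $Y$ to a coefficient field of finite transcendence degree over $\CC_p$. The field $K$ is the filtered union of its finitely generated $\QQ_p$-subalgebras $R_\alpha$, so $X_K=\varprojlim_\alpha(X\times_{\QQ_p}R_\alpha)$ along affine transition maps; as $X$ is quasi-compact and quasi-separated and finite \'etale morphisms are of finite presentation, $Y$ descends to a finite \'etale covering $Y_\alpha\to X\times_{\QQ_p}R_\alpha$ for some $\alpha$. Put $L':=\Frac(R_\alpha)\cdot\CC_p\subseteq K$, a field finitely generated over $\CC_p$, and let $Y'$ be the base change of $Y_\alpha$ to $X_{L'}:=X\times_{\QQ_p}L'$; then $Y'$ is finite \'etale of degree $d$ and $Y'\times_{X_{L'}}X_K\cong Y$. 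Choosing an algebraic closure $\overline{L'}$ of $L'$ inside $K$, the scheme $Y'\times_{L'}\overline{L'}$ has the connected scheme $Y$ as a base change, hence is itself connected; that is, $Y'$ is geometrically connected over $L'$, so $h^0(Y',\calO_{Y'})=1$. If $L'=\CC_p$ then $Y'\to X_{\CC_p}$ is trivial by Lemma~\ref{L:splitting cover discrete} and $d=1$; so assume $L'$ has positive transcendence degree over $\CC_p$.

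Write $L'=\Frac(B')$ with $B'$ a domain of finite type and smooth over $\CC_p$, and put $U=\Spec B'$. After replacing $U$ by a dense open subscheme, $Y'$ extends to a finite \'etale covering $q\colon\mathcal Y\to X\times_{\QQ_p}U$ of degree $d$ with generic fibre $Y'$. Here $\mathcal Y$ is integral: it is reduced, being \'etale over $X\times_{\QQ_p}U$, which is reduced because $X$ is geometrically reduced over the perfect field $\QQ_p$ and $U$ is reduced; and it is irreducible, being flat over the integral scheme $U$ with irreducible generic fibre $Y'$ (the latter being connected and, as it is \'etale over the regular scheme $X_{L'}$, regular). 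Let $f\colon X\times_{\QQ_p}U\to U$ be the structure morphism. The sheaf $\mathcal V:=q_*\calO_{\mathcal Y}$ is an $\calO_{X\times_{\QQ_p}U}$-algebra, locally free of rank $d$ and flat over $U$; since $f$ is cohomologically proper, the semicontinuity theorem shows that the function $v\mapsto h^0(\mathcal Y_v,\calO_{\mathcal Y_v})=\dim_{\kappa(v)}H^0(X_{\kappa(v)},\mathcal V|_{X\times\{v\}})$ is upper semicontinuous on $U$. Over the generic point this equals $h^0(Y',\calO_{Y'})=1$, so the locus where it is $\le1$ is a nonempty, hence dense, open subscheme of $U$; as $U$ is a variety of positive dimension over the algebraically closed field $\CC_p$, this locus contains a point $v_0\in U(\CC_p)$. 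But $\mathcal Y_{v_0}\to X_{\CC_p}$ is a finite \'etale covering of degree $d$, hence trivial by Lemma~\ref{L:splitting cover discrete}, so $h^0(\mathcal Y_{v_0},\calO_{\mathcal Y_{v_0}})=d$. Therefore $d\le1$, i.e. $d=1$.

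I expect the main obstacle to be the ingredient that the structure morphism $X\times_{\QQ_p}U\to U$ of a relative Fargues--Fontaine curve over a Noetherian base $U$ is cohomologically proper --- concretely, that $Rf_*$ of a $U$-flat coherent sheaf is a perfect complex on $U$ --- since this is what licenses the semicontinuity step and is the only genuinely curve-specific analytic input; the other reductions are formal limit and descent arguments. A subsidiary point to verify en route is that $X$ is geometrically regular (in particular geometrically reduced) over the perfect field $\QQ_p$, which is used to control the base changes $X\times_{\QQ_p}U$ and the family $\mathcal Y$.
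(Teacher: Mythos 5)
Your proposal founders on two points, each essential. First, it targets the wrong object. The theorem, as the paper makes precise in Theorem~\ref{T:simple connectivity2}, is about the \emph{analytic} base extension $X_{L,E} \times_E K$, built from the completed tensor products $B^I_{L,E} \widehat{\otimes}_E K$; you work throughout with the scheme-theoretic fibre product $X \times_{\QQ_p} K$ formed with ordinary tensor products, and the opening claim that GAGA makes the two interchangeable is unsupported. GAGA for Fargues--Fontaine curves compares the schematic and adic curves over the base field itself; it says nothing about the uncompleted scheme $X \times_{\QQ_p} K$ (a non-noetherian scheme, not of finite type over $K$, to which no standard GAGA applies) versus the completed adic space $X_{L,E} \times_E K$, and relating finite \'etale covers of these two objects is essentially the difficulty the theorem addresses, not a formal preliminary: in particular a cover of the completed object is not known to descend to the uncompleted one, so your argument does not even get started on the statement actually needed (e.g.\ for Remark~\ref{R:Pfd product redux}). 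The same mismatch recurs at your special fibre: Lemma~\ref{L:splitting cover discrete} concerns $X_{L,E} \times_E F$ with $F$ a \emph{completed} algebraic closure of $E$, and its proof uses continuity of $\FEt$ along the completed colimit over finite subextensions; your fibre $\mathcal{Y}_{v_0}$ is a cover of the uncompleted $X \times_{\QQ_p} \CC_p$, and since $\CC_p$ is not algebraic over $\overline{\QQ}_p$, neither that lemma nor a limit argument splits it.

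Second, the step you flag as "the main obstacle" is not a verification but a missing theorem, and it is exactly the point where the naive reduction to the $\CC_p$ case breaks. The morphism $f\colon X \times_{\QQ_p} U \to U$ is not proper, not even of finite type, and its total space is not noetherian: $\overline{\QQ}_p \otimes_{\QQ_p} \CC_p$ has infinitely many idempotents, so rings like $C \otimes_{\QQ_p} \CC_p$ (with $C$ the complete algebraically closed residue field of a closed point of $X$), which occur as quotients of the affine rings of your fibres, are non-noetherian. Hence the EGA~III semicontinuity and cohomology-and-base-change theorems do not apply, and no substitute (perfectness of $Rf_*$ of a $U$-flat coherent sheaf, identification of $H^0$ of fibres with fibres of the pushforward, or even the finite-dimensionality of $H^0(Y',\calO_{Y'})$ that your "$h^0(Y')=1$" presupposes) is available; over $E$ itself such finiteness is extracted from the classification of vector bundles (Theorem~\ref{T:vb classification}), precisely the tool the introduction warns cannot be used once the coefficient field is no longer discretely valued. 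Several auxiliary claims (geometric regularity of $X \times_{\QQ_p} U$, irreducibility of $\mathcal{Y}$) likewise lean on noetherian or finite-type hypotheses that fail here. This is why the paper does not spread out and specialize to $\CC_p$: its proof is a transfinite induction on the coefficient field, adjoining one transcendental at a time (Theorem~\ref{T:simple connectivity2}), with $\ZZ/p\ZZ$-covers handled by the Artin--Schreier argument (Lemma~\ref{L:no abelian cover}) and the inductive step by the $p$-adic differential equation arguments of \S\ref{sec:pde}--\S\ref{sec:elimination} (Lemma~\ref{L:full splits}). To salvage your route you would first need a relative finiteness/semicontinuity theorem for coherent cohomology of relative Fargues--Fontaine curves over positive-dimensional bases, together with a comparison between completed and uncompleted base change --- each a substantial result not available in the literature, rather than a routine check.
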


We then apply this result to obtain a form of \emph{Drinfeld's lemma} in $p$-adic analytic geometry,
expanding on a line of inquiry initiated by Scholze \cite{s-berkeley} and continued by this author in \cite[Lecture~4]{kedlaya-aws}.
The original statement by this name is a result about schemes of characteristic $p$,
and specifically about the effect of formation of products on fundamental groups. For context,
recall that (under suitable ``reasonableness'' hypotheses) the formation of the profinite \'etale fundamental group of a scheme commutes with taking products over an algebraically closed field of characteristic 0 
(see for example \cite[Corollary~4.1.23]{kedlaya-aws}).
This fails in characteristic $p$ without stronger hypothesis (e.g., properness of one of the factors);
Drinfeld's lemma is a replacement statement in which one forms the fiber product directly over $\FF_p$, rather than over an algebraically closed field, but then takes a formal quotient by the action of relative Frobenius on one of the two factors. See Theorem~\ref{T:drinfeld schemes} for the precise statement.

We prove an analogue of Drinfeld's lemma in which the category of schemes over $\FF_p$ (or equivalently for the statement, of perfect schemes over $\FF_p$) is replaced by the category of perfectoid spaces;
see Theorem~\ref{T:drinfeld lemma perfectoid}. We also obtain a formally stronger statement in which the category of
perfectoid spaces is replaced by a certain category of stacks, which can be Scholze's category of \emph{diamonds}
or the even larger category of \emph{small v-sheaves} (both terms used in the sense of \cite{s-berkeley});
see Theorem~\ref{T:drinfeld lemma v-sheaves}. In both cases, Theorem~\ref{T:simple connectivity} amounts to the special case where the two input spaces are both geometric points, and (as for schemes) it is a formal reduction to pass from this case to the general case.

As an aside, we note that Drinfeld's lemma for perfectoid spaces has some notable consequences in $p$-adic Hodge theory, particularly for giving descriptions of the representation theory of products of $p$-adic Galois groups analogous to the description for singleton Galois groups given by the theory of $(\varphi, \Gamma)$-modules.
See \cite{ckz} for a treatment of this topic.

We conclude this introduction with some words on the proof of Theorem~\ref{T:simple connectivity}.
We begin with two approaches that we were unable to make work. One is to somehow reduce to Drinfeld's lemma for schemes; this fails because the nature of absolute products is rather different in the category of perfectoid spaces than in the category of schemes, and in particular quasicompactness is not preserved (see Lemma~\ref{L:Pfd products}).
Another is to adapt the proof in the case $K = \CC_p$; this fails because the classification of vector bundles on a Fargues--Fontaine curve makes crucial use of the discreteness of the valuation on finite extensions of $\QQ_p$. (An exception to this occurs when $F$ is the completed algebraic closure of $\FF_p((t))$, as then one can deduce the claim from Lemma~\ref{L:splitting cover discrete} using a certain symmetry between $F$ and $K$;
see Remark~\ref{R:symmetry applied} and Remark~\ref{R:symmetry}.)

To circumvent these points, we use a two-pronged approach. First,
we give a direct argument using the Artin--Schreier construction
to rule out the existence of nontrivial $\ZZ/p\ZZ$-covers
(Lemma~\ref{L:no abelian cover}); it may be of independent interest that essentially the same argument applies for schemes (see Lemma~\ref{L:drinfeld schemes pro-p}).
We then use this to establish an inductive statement: if simple connectivity holds for a particular coefficient field $K$, then it is also true for any completed algebraic closure of $K(t)$.
(It should also be possible to prove Drinfeld's lemma for schemes by such an inductive approach, but we do not know of a reference where this is done; see Remark~\ref{R:special case}.)
In light of the argument for $\ZZ/p\ZZ$-covers, this reduces to the construction of ``ramification filtrations'' on representations of the \'etale fundamental group using tools from the theory of $p$-adic differential equations \cite{kedlaya-book};
this is in the spirit of previous work of the author 
\cite{kedlaya-swan1, kedlaya-swan2} and Xiao \cite{xiao-swan1, xiao-swan2} on differential Swan conductors.
(It should be possible to replace the use of $p$-adic differential equations with a somewhat more elementary argument using ramification theory of covers of Berkovich curves, as described in \cite{cohen-temkin-trushkin, temkin-curves}; we did not attempt to do this.)

\subsection*{Acknowledgments} Thanks to Peter Scholze for helpful discussions.

\section{Some algebraic preliminaries}

We start with some assorted algebraic preliminaries.

\begin{lemma} \label{L:integral tensor product}
Let $k$ be an algebraically closed field. Let $A$ and $B$ be two integral $k$-algebras. Then
$A \otimes_k B$ is again integral.
\end{lemma}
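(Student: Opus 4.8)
The plan is to reduce to a statement about finite-type schemes over $k$ and then prove irreducibility by a fibral argument. (Here ``integral'' means ``an integral domain.'') First I would reduce to the case where $A$ and $B$ are finitely generated over $k$. Writing $A$ as the filtered union of its finitely generated $k$-subalgebras $A_0$ -- each a domain, being a subring of $A$ -- we have $A \otimes_k B = \varinjlim_{A_0}(A_0 \otimes_k B)$, and the transition maps $A_0 \otimes_k B \to A_0' \otimes_k B$ are injective since $B$ is free, hence flat, as a $k$-module. A filtered colimit of integral domains along injective ring maps is an integral domain, so it suffices to prove that each $A_0 \otimes_k B$ is a domain; running the same argument in the other variable, we may assume both $A$ and $B$ are finitely generated.

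Put $X = \Spec A$ and $Y = \Spec B$, which are integral affine $k$-schemes of finite type, so the claim becomes that $X \times_k Y$ is integral, i.e.\ reduced and irreducible. Reducedness follows from the hypothesis on $k$: since $k$ is algebraically closed it is perfect, so the reduced $k$-algebras $A$ and $B$ are geometrically reduced, and it is a standard fact that the tensor product over $k$ of a geometrically reduced $k$-algebra with a reduced one is reduced.

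The heart of the matter is irreducibility of $X \times_k Y$. Let $\pi\colon X\times_k Y \to X$ be the first projection. As the base change of $Y \to \Spec k$, the map $\pi$ is flat, of finite presentation, and surjective; in particular it is an open map. Suppose $X \times_k Y = Z_1 \cup Z_2$ with $Z_1, Z_2$ closed, and set $V_i = \pi\bigl((X\times_k Y)\setminus Z_i\bigr)$, an open subset of $X$. If $x \in X$ is a closed point then $\kappa(x) = k$ (Nullstellensatz), so the scheme-theoretic fibre $\pi^{-1}(x)$ is isomorphic to $Y$ and hence irreducible; being covered by the closed subsets $\pi^{-1}(x)\cap Z_1$ and $\pi^{-1}(x)\cap Z_2$, it lies entirely in one of them, so $x$ cannot belong to both $V_1$ and $V_2$. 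Since $X$ is of finite type over the algebraically closed field $k$, every nonempty open subset of $X$ contains a closed point, whence $V_1 \cap V_2 = \emptyset$; as $X$ is irreducible, one of $V_1, V_2$ is empty, and since $\pi$ is surjective this forces the corresponding $Z_i$ to be all of $X\times_k Y$.

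The reductions and the reducedness step are routine once one invokes the standard facts about geometrically reduced algebras over a perfect field and openness of flat, finitely presented morphisms; the one place that requires care is assembling the fibral argument for irreducibility, in particular the reduction to finitely generated algebras so that the fibres over closed points are honest copies of $Y$.
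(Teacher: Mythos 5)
Your argument is correct, but it is a genuinely different route from the paper's. The paper disposes of this lemma in two lines by citing Bourbaki: over an algebraically closed field every integral $k$-algebra is \emph{absolutely integral} (its extension of scalars to any field extension remains integral), and Bourbaki's proposition then gives directly that the tensor product with any integral $k$-algebra is integral. You instead give an essentially self-contained geometric proof: reduce to finitely generated algebras by writing $A \otimes_k B$ as a filtered colimit along injections (flatness of $B$ over $k$), check reducedness via the standard fact that over a perfect field a reduced algebra is geometrically reduced and that geometrically reduced tensor reduced is reduced, and prove irreducibility of $\Spec A \times_k \Spec B$ by the fibral argument over closed points, where the Nullstellensatz makes each fibre a copy of $\Spec B$ and Jacobsonness supplies closed points in the relevant open sets. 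All the steps are sound; two small remarks: surjectivity of the projection is not actually needed at the end (a nonempty open set has nonempty image, which already forces the other $Z_i$ to be everything), and your reducedness step leans on a standard fact of roughly the same depth as the Bourbaki citation, so your proof is only partially more elementary than the paper's. What your approach buys is transparency --- it exposes the geometric content (irreducibility of fibre products via closed fibres) rather than outsourcing it --- at the cost of length; the paper's citation is shorter and keeps the preliminaries section lean.
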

\begin{proof}
Since $k$ is algebraically closed, $A$ is an \emph{absolutely integral} $k$-algebra
in the sense of Bourbaki \cite[\S V.17, Proposition 8, Corollaire 2]{bourbaki-algebra}.
By \cite[\S V.17, Proposition 2]{bourbaki-algebra}, this forces $A \otimes_k B$ to be integral.
\end{proof}

\begin{defn}
By a \emph{nonarchimedean} field $F$, we will mean a field equipped with a \emph{specified} nontrivial multiplicative norm with respect to which $F$ is complete. (Specifying a norm on $F$, in addition to its topology, amounts to fixing  the norm of some topologically nilpotent unit.)
Let $\frako_F$, $\kappa_F$, $\Gamma_F$ denote the valuation ring, residue field, and value group of $F$ (written additively), respectively.
\end{defn}

\begin{defn} \label{D:spherically complete}
For $k$ a field and $\Gamma$ a totally ordered subgroup of $\RR$ (written additively), the field of 
Hahn--Mal'cev--Neumann generalized power series $k((t^\Gamma))$ is defined to be the set of formal sums
$\sum_{i \in \Gamma} x_i t^i$ with coefficients in $k$ having well-ordered support.
For $x$ such a sum, we write $x_-, x_0, x_+$ for the sums of $x_i t^i$ over indices $i$ with respectively $i<0$, $i=0$, $i>0$. Some key facts about this construction are the following; these include results of 
Kaplansky \cite{kaplansky, kaplansky2}.
\begin{itemize}
\item
For the $t$-adic absolute value, the field $k((t^\Gamma))$ is not only complete but \emph{spherically complete}
(every decreasing sequence of balls has nonempty intersection).
\item
The field $k((t^\Gamma))$ is algebraically closed if and only if $k$ is algebraically closed and $\Gamma$ is divisible. 
\item
Any nonarchimedean field $L$ of equal characteristic which is spherically complete is isomorphic to
$\kappa_L((t^{\Gamma_L}))$.
\item
A nonarchimedean field is spherically complete if and only if it is \emph{maximally complete}, that is,
it admits no nontrivial algebraic extension with the same value group and residue field. 
\end{itemize}
\end{defn}

\begin{defn} \label{D:types of points}
Let $D$ be a one-dimensional affinoid space over an algebraically closed nonarchimedean field $K$, in the sense of Huber's category of adic spaces \cite{huber-book}. We classify points of $D$ into \emph{types 1,2,3,4,5} as in \cite[Example~2.20]{scholze1}; note that this classification is preserved by lifting a point along a finite morphism.
\end{defn}

We recall also the following additional features of the classification. 
\begin{lemma}
With notation as in Definition~\ref{D:types of points}, the following statements hold.
\begin{enumerate}
\item
Each point of type 1 corresponds to a classical rigid-analytic point of $D$.
\item
Each point of type 2 is the generic point on some one-dimensional affinoid space of good reduction. The reduction of this affinoid space is a curve $C$ over the residue field $\kappa$ of $K$; if $C$ is proper over $\kappa$, we say that $D$ is a \emph{strict neighborhood} of the original point. 
The genus of the smooth compactification of $C$ is called the \emph{residual genus} of the original point.
\item
Each point of type 3 is the intersection of a descending sequence of annuli.
\item
Each point of type 4 is the intersection of a descending sequence of discs.
\item
Each point of type 5 is a specialization of a type 2 point; we may thus again speak of the \emph{residual genus} of a type 5 point.
The points of type 5 are the only points whose associated valuations are of rank 2 rather than rank 1. 
\end{enumerate}
\end{lemma}
\begin{proof}
The maximal Hausdorff quotient of $D$ is a one-dimensional Berkovich analytic space, which consists of points of type 1--4 with the specified properties;
see \cite[Example~1.4.1]{berkovich} for the case where $D$ is contained in the projective line and
\cite[\S 3.3]{ducros} for the general case.

To complete the argument from this, it suffices to check that every point of $D$ not in the associated Berkovich space is a specialization of a type 2 point. For this, one may reduce 
to the case where $D$ is a disc or annulus and then argue as in \cite[Example~2.20]{scholze1}.
\end{proof}

\section{Drinfeld's lemma for schemes}

We next recall from \cite[\S 4.2]{kedlaya-aws} the formulation of Drinfeld's lemma for schemes.
This will be used primarily as the basis of an analogy with the setup for perfectoid spaces;
our approach to Drinfeld's lemma for perfectoid spaces does not depend on the full result for schemes,
but only on a special case which can be handled more directly (Lemma~\ref{L:drinfeld schemes pro-p}).
On the other hand, it is possible to go the other way, deriving the result from schemes from the result for perfectoid spaces; see \S\ref{sec:drinfeld diamonds}.

\begin{defn}
Let $X$ be a scheme or adic space, and let $\Gamma$ be a group of automorphisms of $X$.
We say that $X$ is \emph{$\Gamma$-connected} if $X$ is nonempty and its only $\Gamma$-stable closed-open subsets are itself and the empty set. 

Let $\FEt(X/\Gamma)$ be the category of finite \'etale coverings of $X$
equipped with $\Gamma$-actions (i.e., finite \'etale coverings of the stack-theoretic quotient of $X$ by $\Gamma$).
This is a Galois category  in the sense of \cite[Tag~0BMQ]{stacks-project}.

For $X$ $\Gamma$-connected and $\overline{x}$ a geometric point of $X$, let
$\pi_1^{\prof}(X/\Gamma, \overline{x})$ be the automorphism group of the fiber functor $Y \mapsto \left| Y_{\overline{x}} \right|$ on $\FEt(X/\Gamma)$. As usual, the choice of the basepoint $\overline{x}$
is needed to resolve a conjugation ambiguity in the definition; when this ambiguity is not an issue, we may omit the choice of $\overline{x}$ from the notation. 

When $\Gamma$ is the cyclic group generated by a single automorphism $\varphi$, we typically write
$X/\varphi$ in place of $X/\Gamma$.
When $\Gamma$ is the trivial group, we typically write $X$ in place of $X/\Gamma$; this recovers the usual definition of the
profinite (\'etale) fundamental group.
\end{defn}

\begin{remark}
In the setting of adic spaces, a geometric point of $X$ is a morphism $\Spa(K,K^+) \to X$ where $K$ is algebraically closed, but we do not require that $K^+$ equal $K^\circ$; that is, the valuation ring of $K$ may have rank greater than 1. However, the choice of $K^+$ has no effect on the resulting fiber functor.
\end{remark}

\begin{lemma} \label{L:same pi1}
Let $f: Y \to X$ be a morphism of schemes or adic spaces, such that both $X$ and $Y$ are qcqs 
(quasicompact and quasiseparated). 
Suppose that the base change functor $\FEt(X) \to \FEt(Y)$ is an equivalence of categories.
\begin{enumerate}
\item[(a)]
The map $\pi_0(X) \to \pi_0(Y)$ is a homeomorphism.
\item[(b)]
Suppose that one of $X$ or $Y$ is connected. Then so is the other, and for any geometric point $\overline{y}$ of $Y$ the map $\pi_1^{\prof}(Y, \overline{y}) \to \pi_1^{\prof}(X, \overline{y})$ is a homeomorphism.
\end{enumerate}
\end{lemma}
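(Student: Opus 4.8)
The plan is to reduce both parts to soft, formal properties of Galois categories, using crucially that the equivalence in the hypothesis is the base change functor $f^{*}\colon\FEt(X)\to\FEt(Y)$, $W\mapsto W\times_X Y$, rather than an abstract equivalence.

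For part (a), I would first recall that the underlying topological space of a qcqs scheme or adic space is spectral (for adic spaces this is due to Huber \cite{huber-book}), so that $\pi_0(X)$ is a profinite set, canonically the Stone space of the Boolean algebra $\operatorname{Clopen}(X)$ of clopen subsets of $X$, and likewise for $Y$. Next I would argue that $\operatorname{Clopen}(X)$ is visible inside the bare category $\FEt(X)$: a clopen $U\subseteq X$ determines the decomposition $X=U\sqcup(X\setminus U)$ in $\FEt(X)$, exhibiting $U$ as a subobject of the terminal object, while conversely any subobject of the terminal object of $\FEt(X)$ is represented by a clopen immersion $V\hookrightarrow X$ (a finite \'etale monomorphism, hence an open immersion with closed, because proper, image); the Boolean operations match on the two sides, and the analogous statements hold for adic spaces. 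As $f^{*}$ carries the covering $U\hookrightarrow X$ to $f^{-1}(U)\hookrightarrow Y$, the equivalence induces the isomorphism of Boolean algebras $\operatorname{Clopen}(X)\xrightarrow{\ \sim\ }\operatorname{Clopen}(Y)$, $U\mapsto f^{-1}(U)$; by Stone duality $\pi_0(f)\colon\pi_0(Y)\to\pi_0(X)$ is then a homeomorphism, whose inverse is the asserted map $\pi_0(X)\to\pi_0(Y)$.

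For part (b): by (a) the profinite sets $\pi_0(X)$ and $\pi_0(Y)$ are homeomorphic, so one of $X$, $Y$ is connected (that is, has $\pi_0$ a single point) exactly when the other is; assume both are, so that $\FEt(X)$ and $\FEt(Y)$ are connected Galois categories in the sense of \cite[Tag~0BMQ]{stacks-project}. Set $\overline x:=f\circ\overline y$, a geometric point of $X$; in the adic case its associated fiber functor is independent of the choice of $K^{+}$, by the remark above. For $W\in\FEt(X)$ there is a canonical bijection $\lvert W_{\overline x}\rvert=\lvert(W\times_X Y)_{\overline y}\rvert$, so the fiber functor $F_{\overline x}$ of $\FEt(X)$ agrees with the composite $F_{\overline y}\circ f^{*}$. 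The map in the statement is obtained by whiskering automorphisms of $F_{\overline y}$ with $f^{*}$, i.e.\ it is $\operatorname{Aut}(F_{\overline y})\to\operatorname{Aut}(F_{\overline y}\circ f^{*})=\operatorname{Aut}(F_{\overline x})$, that is $\pi_1^{\prof}(Y,\overline y)\to\pi_1^{\prof}(X,\overline x)$ (the basepoint written $\overline y$ in the statement being understood as $\overline x$); since $f^{*}$ is an equivalence of categories, this whiskering map is an isomorphism of profinite groups, hence a homeomorphism.

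Since everything here is formal, I do not anticipate a serious obstacle. The two points that genuinely require an argument, albeit a short one, are the spectrality of qcqs adic spaces, which is what allows schemes and adic spaces to be handled uniformly and which I would simply cite, and the claim that subobjects of the terminal object of $\FEt(X)$ are precisely the clopen subsets of $X$ --- equivalently, that passing from $X$ to its category of finite \'etale coverings creates no ``exotic'' idempotents. I expect this last verification (together with its adic counterpart) to be the main, modest, hurdle.
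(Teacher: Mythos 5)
Your argument is correct and is essentially the paper's: the paper simply cites the Stacks Project [Tag 0BQA] and observes that qcqs adic spaces have spectral underlying spaces, and your proof just unpacks that citation (clopens as subobjects of the terminal object plus the formal fiber-functor/whiskering argument), resting on the same two inputs. No substantive divergence or gap.
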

\begin{proof}
For schemes, this is \cite[Tag~0BQA]{stacks-project}; the same proof applies for adic spaces
once we observe that the underlying topological space of a qcqs adic space is a spectral space.
(This is true by construction for adic affinoid spaces, and a qcqs adic space is covered by finitely
many adic affinoid spaces spaces with the pairwise intersections being quasicompact.)
\end{proof}

We now restrict to the case of schemes. For the corresponding discussion for analytic spaces,
see \S\ref{sec:drinfeld diamonds}.
\begin{defn} \label{D:Phi quotient}
Let $X_1,\dots,X_n$ be  schemes over $\FF_p$ and put
$X = X_1 \times_{\FF_p} \cdots \times_{\FF_p} X_n$. 
Write $\varphi_i$ as shorthand for $\varphi_{X_i}$, the automorphism of $X$ induced by the absolute ($p$-power) Frobenius on $X_i$.
We say that $X$ is \emph{$\Phi$-connected} if $X$ is $\langle \varphi_1,\dots, \widehat{\varphi_i}, \dots,\varphi_n \rangle$-connected for some (and hence any) $i \in \{1,\dots,n\}$. 

Define the category
\[
\FEt(X/\Phi) := \FEt(X/ \langle \varphi_1, \dots, \varphi_n \rangle)
\times_{\FEt(X/\varphi_X)} \FEt(X);
\]
for each $i \in \{1,\dots,n\}$, there is a canonical equivalence
\[
\FEt(X/\Phi) \cong \FEt(X/\langle \varphi_1,\dots, \widehat{\varphi_i}, \dots,\varphi_n \rangle).
\]
\end{defn}

\begin{remark}
The condition that $X$ is $\Phi$-connected is equivalent to saying that the algebraic stack $X/\Phi$ is connected;
similarly, $\FEt(X/\Phi)$ is naturally (in $X$) equivalent to the category of finite \'etale coverings of the stack $X/\Phi$. Of course $X/\Phi$ is generally not a scheme; by contrast, the corresponding construction for perfectoid spaces will give an object in the same category (see Lemma~\ref{L:Pfd products}).
\end{remark}

The key nonformal input into the proof of Drinfeld's lemma is the following.
\begin{lemma} \label{L:drinfeld schemes point}
Let $X$ be any scheme over $\FF_p$, let $k$ be an algebraically closed field over $\FF_p$, put
$X_k := X \times_{\FF_p} k$, and let $\varphi_k: X_k \to X_k$ be the morphism induced by the Frobenius on $k$. Then
the functor
\[
\FEt(X) \to \FEt(X_k/\varphi_k)
\]
is an equivalence of categories.
\end{lemma}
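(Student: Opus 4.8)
The strategy is to reduce to the fundamental theorem of Galois descent along the ``$\varphi_k$-quotient'' by unwinding what an object of $\FEt(X_k/\varphi_k)$ is. An object there is a finite \'etale covering $Y \to X_k$ together with an isomorphism $\sigma\colon \varphi_k^* Y \xrightarrow{\sim} Y$ (a ``Frobenius-equivariant structure''), and morphisms are morphisms of coverings commuting with $\sigma$. The essential point is that such a descent datum against the \emph{non-flat} operation $\varphi_k$ on $X_k$ is equivalent to descent all the way down to $X$. I would first treat the affine case $X = \Spec R$, $X_k = \Spec(R \otimes_{\FF_p} k)$, so that an object is a finite \'etale $R\otimes_{\FF_p}k$-algebra $S$ with an isomorphism $S \otimes_{k,\mathrm{Frob}} k \cong S$ of $R\otimes_{\FF_p}k$-algebras; the claim is that the category of such is equivalent, via $S_0 \mapsto S_0 \otimes_{\FF_p} k$, to finite \'etale $R$-algebras.

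The key steps, in order. First, reduce to $X$ affine (hence, after checking, to $X = \Spec R$ with $R$ perfect, since finite \'etale coverings are insensitive to replacing $R$ by $R^{\mathrm{perf}}$ and likewise $R\otimes_{\FF_p}k$ by its perfection — and when $R$ is perfect, $R\otimes_{\FF_p}k$ is reduced, indeed perfect). Second, prove full faithfulness: given $S_0, S_0'$ finite \'etale over $R$, a $\varphi_k$-equivariant map $S_0\otimes k \to S_0'\otimes k$ must, because $\FF_p$ is the fixed field of $\mathrm{Frob}$ on $k$ and $S_0$ is already defined over $R$, descend to a map $S_0 \to S_0'$; here one uses that $(R\otimes_{\FF_p}k)^{\mathrm{Frob}=\mathrm{id}\ \text{on the }k\text{-factor}} = R$, which is a clean linear-algebra/Artin--Schreier-style computation once $k$ is algebraically closed (so that $k^{\mathrm{Frob}}=\FF_p$ and $k$ is a ``sufficiently large'' coefficient field). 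Third, and this is the crux, prove essential surjectivity: given $(S,\sigma)$, one must produce a finite \'etale $R$-algebra descending it. The route is to spread out: $S$, being finite \'etale over $R\otimes_{\FF_p}k$, is already defined over $R\otimes_{\FF_p}k_0$ for some finitely generated (hence finite, being a domain and algebraic over $\FF_p$... — no: $k_0$ finitely generated over $\FF_p$, not finite) subfield $k_0 \subseteq k$, and the $\varphi_k$-action lets one ``untwist'' the field of definition: iterating $\sigma$ identifies $S$ with its Frobenius pullbacks, and a limit/colimit argument over the tower $k_0 \subseteq k_0^{1/p} \subseteq \cdots$ together with $\varphi_k$-equivariance forces the field of definition down to $\FF_p$, i.e. produces $S_0$ over $R$ with $S_0 \otimes_{\FF_p} k \cong S$ compatibly with $\sigma$. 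Finally, glue: for general $X$, cover by affines, apply the affine case, and descend along the (now \'etale, hence effective) cover — equivalently invoke that $\FEt(-/\varphi_k)$ is a stack for the \'etale (even fpqc) topology on $X$, matching the stack property of $\FEt(-)$.

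The main obstacle will be essential surjectivity, specifically the ``descent of the field of definition'' step: one must show that an arbitrary finite \'etale $R\otimes_{\FF_p}k$-algebra equipped with a Frobenius descent datum is necessarily base-changed from $R$, and the danger is that the spreading-out over a finitely generated subfield $k_0$ interacts badly with $\varphi_k$ (which does not fix $k_0$). The resolution is that $\varphi_k$-equivariance of $\sigma$ gives isomorphisms between the descent over $k_0$ and over $k_0^{p}$, $k_0^{1/p}$, etc., and one plays these against the finiteness of the data to conclude the algebra is defined over $\bigcap_n k_0^{1/p^n}\cap\bigcup_n k_0^{1/p^n}$-type fixed field, which is $\FF_p$ because $k/\FF_p$ is a colimit of such perfections and Frobenius acts without fixed points beyond $\FF_p$. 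This is precisely the mechanism by which ``quotient by Frobenius'' on the $k$-factor undoes the base change $-\otimes_{\FF_p}k$, and it is exactly the schematic shadow of the perfectoid Drinfeld's lemma; I would present it as a lemma on descent of finite \'etale algebras along $R \to R\otimes_{\FF_p}k$ equivariant for $\mathrm{Frob}\otimes k$, and then the theorem is immediate.
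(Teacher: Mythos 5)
The reduction to the affine case and your full-faithfulness argument are fine: since $S_0'$ is finite projective over $R$, one has $\Hom_{R\otimes_{\FF_p}k}(S_0'\otimes k, S_0\otimes k)=\Hom_R(S_0',S_0)\otimes_{\FF_p}k$, and taking invariants under $\mathrm{id}\otimes\mathrm{Frob}$ recovers $\Hom_R(S_0',S_0)$ because $k^{\mathrm{Frob}}=\FF_p$. (A small slip: $\varphi_k$ is not ``non-flat''; it is the base change of the flat map $\varphi\colon k\to k$, hence faithfully flat, though this by itself gives no descent to $\FF_p$.) The problem is essential surjectivity, which is the entire content of the lemma, and there your argument is not a proof. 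Knowing that $S$ is defined over some finitely generated subfield $k_0$ and that $\sigma$ identifies $S$ with its Frobenius pullback does not let you ``force the field of definition down to $\FF_p$'': an abstract isomorphism with each Frobenius twist is not a descent datum (no cocycle condition is ever produced, and descent along $\Spec k\to\Spec\FF_p$ would require compatible data over $k\otimes_{\FF_p}k$), and minimal fields of definition do not pass to intersections of subfields without a rigidity input. The tell is that your sketch uses nothing about $k$ beyond $k^{\mathrm{Frob}}=\FF_p$, and with only that hypothesis the statement is false: take $X=\Spec\FF_p$ and $k$ the perfect closure of $\FF_p(t)$ (so $k^{\mathrm{Frob}}=\FF_p$); then $S=k[y]/(y^p-y-t)$ is a degree-$p$ field extension of $k$, and the absolute Frobenius of $S$ is a $\varphi$-semilinear automorphism, so $(S,\sigma)$ is an object of $\FEt(X_k/\varphi_k)$; it is not in the essential image, since objects coming from $\FEt(\FF_p)$ are products of constant-field extensions $k\cdot\FF_{p^n}$, which are unramified, whereas $S$ is an Artin--Schreier extension ramified above $t=\infty$. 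So algebraic closedness of $k$ must enter essential surjectivity in a serious way, and your mechanism never invokes it.

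For comparison, the paper does not reprove the lemma; it cites \cite[Lemma~4.2.6]{kedlaya-aws}, and Remark~\ref{R:special case} of the paper indicates where the genuine work lies: the case $k=\overline{\FF}_p$ is easy (spread out to a finite subfield $\ell\subset\overline{\FF}_p$ and split after a base change), while the general case is handled in the reference by spreading $(S,\sigma)$ out over a finitely generated $\FF_p$-subalgebra $A\subseteq k$, specializing at a closed point with finite residue field (which embeds in $k$ precisely because $k\supseteq\overline{\FF}_p$), using the iterated Frobenius structure to build an honest Galois descent datum for a finite extension of $\FF_p$, and then a rigidity/constancy argument over $\Spec A$ to identify the specialized object with the original one over $k$. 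Your outline is missing exactly these steps; as written, the ``descent of the field of definition'' paragraph asserts the conclusion rather than proving it.
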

\begin{proof}
See \cite[Lemma~4.2.6]{kedlaya-aws}.
\end{proof}

This then leads to the precise statement of Drinfeld's lemma;
note that the additional hypotheses on $X_1,\dots,X_n$ arise from Lemma~\ref{L:same pi1}.
For the mechanism to pass from Lemma~\ref{L:drinfeld schemes point} and this result, see the proof of Theorem~\ref{T:drinfeld lemma perfectoid}.
\begin{theorem}[Drinfeld's lemma for schemes] \label{T:drinfeld schemes}
Let $X_1,\dots,X_n$ be connected qcqs (quasicompact quasiseparated) schemes over $\FF_p$ and put
$X = X_1 \times_{\FF_p} \cdots \times_{\FF_p} X_n$. 
\begin{enumerate}
\item[(a)]
The scheme $X$ is $\Phi$-connected.
\item[(b)]
For any geometric point
$\overline{x}$ of $X$, the map
\[
\pi_1^{\prof}(X/\Phi, \overline{x}) \to \prod_{i=1}^n \pi_1^{\prof}(X_i, \overline{x})
\]
is an isomorphism of topological groups.
\end{enumerate}
\end{theorem}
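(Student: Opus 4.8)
The plan is to prove both parts simultaneously by induction on $n$. The base case $n=1$ is immediate: there $\varphi_1=\varphi_{X_1}$, so by Definition~\ref{D:Phi quotient} the category $\FEt(X_1/\Phi)$ is just $\FEt(X_1)$, and $\Phi$-connectedness is exactly the hypothesis that $X_1$ is connected. The engine of the inductive step is Lemma~\ref{L:drinfeld schemes point}, in an equivariant form, applied fibrewise over the last coordinate $X_n$ and fed by the inductive hypothesis for $n-1$ factors; Lemma~\ref{L:same pi1} then converts the categorical conclusion into the stated isomorphism of profinite groups. (If convenient one may first reduce to the case where each $X_i$ is of finite type over $\FF_p$, since every connected qcqs $\FF_p$-scheme is a cofiltered limit of connected finite-type $\FF_p$-schemes with affine transition maps and both $\pi_0(-)$ and $\FEt(-)$ commute with such limits.)

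For the step from $n-1$ to $n$, write $V=X_1\times_{\FF_p}\cdots\times_{\FF_p}X_{n-1}$, so $X=V\times_{\FF_p}X_n$. Using the canonical equivalence of Definition~\ref{D:Phi quotient} to delete $\varphi_n$, I replace $\FEt(X/\Phi)$ by the equivariant category $\FEt(X/\langle\varphi_1,\dots,\varphi_{n-1}\rangle)$; since these automorphisms move only the factor $V$, the two projections descend to morphisms $X/\langle\varphi_1,\dots,\varphi_{n-1}\rangle\to V/\langle\varphi_1,\dots,\varphi_{n-1}\rangle$ and $q\colon X/\langle\varphi_1,\dots,\varphi_{n-1}\rangle\to X_n$. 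Over a geometric point $\Spec k\to X_n$, the fibre of $q$ is $V_k/\langle\varphi_1,\dots,\varphi_{n-1}\rangle$; because the group $\langle\varphi_1,\dots,\varphi_{n-1}\rangle$ already contains the absolute Frobenius $\varphi_1\cdots\varphi_{n-1}$ of $V$, the residual $\varphi_k$-equivariance is automatic and the equivariant form of Lemma~\ref{L:drinfeld schemes point} identifies $\FEt$ of this fibre with $\FEt(V/\langle\varphi_1,\dots,\varphi_{n-1}\rangle)$. Once the precise Frobenius-quotient bookkeeping of Definition~\ref{D:Phi quotient} is sorted out, the inductive hypothesis describes this category in terms of $\prod_{i=1}^{n-1}\pi_1^{\prof}(X_i)$, and in particular all fibres of $q$ are connected. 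Combining this with the other projection one obtains the candidate homomorphism $\pi_1^{\prof}(X/\Phi)\to\prod_{i=1}^{n}\pi_1^{\prof}(X_i)$ of the theorem, and it remains to see that it is an isomorphism of topological groups and that $X/\langle\varphi_1,\dots,\varphi_{n-1}\rangle$ is connected.

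The main obstacle is precisely this passage from the fibrewise picture to the global one: one must show that a finite étale covering of $X/\langle\varphi_1,\dots,\varphi_{n-1}\rangle$ whose restriction to every fibre of $q$ is pulled back from $V$ is globally an external product of a covering of $V/\langle\varphi_1,\dots,\varphi_{n-1}\rangle$ with one of $X_n$, and dually that every $\langle\varphi_1,\dots,\varphi_{n-1}\rangle$-stable open-closed subset of $|X|$ is the preimage of such a subset of $X_n$. This combines a rigidity statement — a family of finite étale covers of $V$ parametrized by the connected scheme $X_n$ is constant, up to twisting by a covering of $X_n$ — with a descent along $q$; it would follow at once from proper base change if $V$ were proper over $\FF_p$, and in the general case one handles the non-properness by a constructibility argument (spreading the fibrewise data over a dense open of $X_n$, controlling the boundary using quasicompactness, and gluing). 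With this in hand, Lemma~\ref{L:same pi1} yields the asserted topological isomorphism of profinite groups, and it is the hypotheses of that lemma that force the quasicompactness and quasiseparatedness assumptions on the $X_i$.

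An alternative organization replaces each $X_i$ at the outset by its pro-universal cover $\widetilde X_i\to X_i$, a connected and simply connected pro-scheme carrying a $\pi_1^{\prof}(X_i)$-action that commutes with Frobenius; by Galois-category formalism the whole theorem then reduces to the single assertion that $(\widetilde X_1\times_{\FF_p}\cdots\times_{\FF_p}\widetilde X_n)/\Phi$ is connected and simply connected. In the case where the $X_i$ are geometric points $\Spec C_i$ this assertion is exactly Lemma~\ref{L:drinfeld schemes point} (with connectedness of the relevant tensor products coming, after a base change, from Lemma~\ref{L:integral tensor product}, the subtlety being that $\FF_p$ itself is not algebraically closed), but the general case of the assertion still comes down to the same fibrewise rigidity step, so this route does not bypass the main obstacle.
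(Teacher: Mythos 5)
Your skeleton (induction on $n$, base case $n=1$, Lemma~\ref{L:drinfeld schemes point} applied over geometric points of the last factor, Lemma~\ref{L:same pi1} to convert categorical statements into statements about $\pi_1^{\prof}$, reduction to finite type by noetherian approximation) is the right frame, and it matches the mechanism the paper points to: Theorem~\ref{T:drinfeld schemes} is quoted from \cite[Lemma~4.2.11, Theorem~4.2.12]{kedlaya-aws}, with the passage from Lemma~\ref{L:drinfeld schemes point} to the theorem spelled out in the proof of Theorem~\ref{T:drinfeld lemma perfectoid}. But the step you yourself flag as the main obstacle is left unproved, and the method you propose for it does not work. You want the rigidity statement that a finite \'etale cover of $X/\langle \varphi_1,\dots,\varphi_{n-1}\rangle$ which is fibrewise (over $X_n$) pulled back from $V$ is globally an external product, and you propose to get it ``as in proper base change'' in the proper case and by spreading out over a dense open of $X_n$ plus quasicompactness in general. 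That is precisely the kind of argument that fails in characteristic $p$: for non-proper factors, fibrewise constancy of covers does not spread out (Artin--Schreier covers of $\mathbb{A}^1 \times \mathbb{A}^1$ already defeat the naive K\"unneth statement), and the partial Frobenius quotient --- the one ingredient that makes Drinfeld's lemma true --- plays no role anywhere in your sketch of this step. An argument that never uses $\Phi$-invariance at the crucial moment cannot prove a statement that is false without it, so ``constructibility + gluing'' is a gap, not a compression.

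The actual mechanism (visible in the proof of Theorem~\ref{T:drinfeld lemma perfectoid}, which is the template for the scheme case) avoids your global external-product claim entirely. Connectedness (part (a)) is proved by taking a $\varphi$-stable disconnection, showing fibrewise over geometric points of $X_n$ that it is pulled back from the other factor (Lemma~\ref{L:same pi1} plus the point lemma), and then using an inverse limit of spectral spaces in the constructible topology to see that the images in $X_n$ disconnect $X_n$. Part (b) is then a homotopy-exact-sequence argument: the composite $\pi_1^{\prof}(\text{fibre}) \to \pi_1^{\prof}(X/\Phi) \to \pi_1^{\prof}(V/\Phi)$ is an isomorphism by the point lemma; surjectivity onto $\pi_1^{\prof}(X_n)$ follows from connectedness of $(V \times X_n')/\Phi$ for connected $X_n' \in \FEt(X_n)$; and exactness in the middle uses the existence of a maximal subcover pulled back from $X_n$ (the scheme analogue of \cite[Proposition~16.3.3]{s-berkeley}), not a statement that every fibrewise-trivialized cover is an external product. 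If you rewrite your inductive step along these lines --- keeping your reduction to $n=2$ and your use of Lemma~\ref{L:drinfeld schemes point}, but replacing the rigidity/spreading step with the connectedness-plus-maximal-subcover argument --- you get a complete proof; also tidy the bookkeeping in your fibre identification, since the fibre over $\Spec k \to X_n$ should be matched with $\FEt(V/\Phi)$ in the sense of Definition~\ref{D:Phi quotient} (one Frobenius omitted), which is what the inductive hypothesis actually controls.
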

\begin{proof}
See \cite[Lemma~4.2.11]{kedlaya-aws} for (a) and \cite[Theorem~4.2.12]{kedlaya-aws} for (b).
\end{proof}

\begin{remark} \label{R:drinfeld schemes perfect}
To strengthen the analogy we are going after, we note that the forgetful functor from perfect schemes over $\FF_p$ (i.e., those on which Frobenius is an isomorphism) to arbitrary schemes over $\FF_p$ admits a right adjoint which preserves the Zariski and \'etale topologies, corresponding to the functor on rings over $\FF_p$ given by
$R \mapsto \varinjlim_{\varphi} R$. 
Consequently, Theorem~\ref{T:drinfeld schemes} does not change if we require $X_1,\dots,X_n$ to be perfect (although the proof goes through schemes of finite type over $\FF_p$).
\end{remark}

\begin{remark} \label{R:special case}
One can give a direct proof of Lemma~\ref{L:drinfeld schemes point} in the case where $k$ is an algebraic closure of $\FF_p$. In this case, any given object of $\FEt(X_k/\varphi_k)$ is the base extension of an object of $\FEt(X \times_{\FF_p} \ell)$ for some finite extension $\ell$ of $\FF_p$, and thus in turn may be viewed as an object of $\FEt(X)$. 
One shows easily that pulling back from $X$ to this covering splits the the original object of $\FEt(X_k/\varphi_k)$.

With this in mind, one plausible approach to proving Lemma~\ref{L:drinfeld schemes point} in general would be to show that the statement for a given $k$ implies the same for an algebraic closure of $k(t)$. While we do not know of a reference for a proof along these lines, our approach to Drinfeld's lemma for analytic spaces follows essentially this model, with 
Lemma~\ref{L:splitting cover discrete} playing the role of the base case. (A more precise analogue of the argument would be do this induction in the case where $X$ is a geometric point, then formally promote the statement to general $X$; compare the proof of Lemma~\ref{L:extend by geometric point}.)
\end{remark}

As discussed above, we do not know of a way to apply Theorem~\ref{T:drinfeld schemes} directly to deduce
Drinfeld's lemma for perfectoid spaces. However, our proof of the latter does rely upon a limited case of 
Theorem~\ref{T:drinfeld schemes}, covering the maximal pro-$p$ quotient of the profinite fundamental group.
For this, one can give a rather direct argument, which we will then expand upon in the adic setting
(\S\ref{sec:abelian}); the fact that this case of Drinfeld's lemma is easy to prove, even in the schematic case, may be of independent interest.

\begin{lemma} \label{L:drinfeld schemes pro-p}
Let $k_1, k_2$ be algebraically closed fields of characteristic $p$,
put $X_i = \Spec(k_i)$, and put $X := \Spec(k_1 \otimes_{\FF_p} k_2)$.
Then $X$ is $\Phi$-connected and for any geometric point $\overline{x}$ of $X$, 
the maximal pro-$p$ quotient of $\pi_1^{\prof}(X/\Phi, \overline{x})$
is trivial.
\end{lemma}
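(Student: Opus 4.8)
The plan is to reduce the pro-$p$ statement to the assertion that every $\ZZ/p\ZZ$-Galois covering of $X/\Phi$ is split, and then to produce such a splitting by an Artin--Schreier computation exploiting that the operator ``$\varphi_2 - 1$'' on $k_1 \otimes_{\FF_p} k_2$ is surjective with algebraically closed kernel. For the $\Phi$-connectedness: this is a special case of Theorem~\ref{T:drinfeld schemes}(a), but it can also be seen directly, since the connected components of $X$ are indexed by $\Gal(\overline{\FF_p}/\FF_p)$ with the component over $\sigma$ equal to $\Spec(k_1 \otimes_{\overline{\FF_p},\sigma} k_2)$, irreducible by Lemma~\ref{L:integral tensor product} (now applied over the algebraically closed field $\overline{\FF_p}$), on which $\varphi_1$ (equivalently $\varphi_2$) acts through the translation action of $\Gal(\overline{\FF_p}/\FF_p)$ on itself; the latter has no nonempty proper invariant clopen subset, so $X$ is $\varphi_1$-connected.

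Write $R := k_1 \otimes_{\FF_p} k_2$ and $\varphi := \varphi_2 = \mathrm{id}_{k_1} \otimes (a \mapsto a^p)$, an automorphism of $R$ (as $k_2$ is perfect) which is $k_1$-linear and commutes with the Artin--Schreier operator $\wp \colon R \to R$, $\wp(c) = c^p - c$. Since $k_2$ is algebraically closed, the map $a \mapsto a^p - a$ is surjective on $k_2$ with kernel $\FF_p$; tensoring this exact sequence of $\FF_p$-vector spaces with $k_1$ shows that $\varphi - 1 \colon R \to R$ is surjective with kernel $k_1 \otimes_{\FF_p} \FF_p = k_1$.

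By Definition~\ref{D:Phi quotient} there is a canonical equivalence $\FEt(X/\Phi) \cong \FEt(X/\varphi)$, so it suffices to show that $\FEt(X/\varphi)$ has no connected $\ZZ/p\ZZ$-Galois object; this forces $\pi_1^{\prof}(X/\Phi)$ to admit no continuous surjection onto $\ZZ/p\ZZ$, and hence to have trivial maximal pro-$p$ quotient. A routine unwinding of the Artin--Schreier classification shows that a $\ZZ/p\ZZ$-Galois object of $\FEt(X/\varphi)$ amounts to a pair $(f,b) \in R \times R$ with $\varphi(f) - f = \wp(b)$ --- the underlying covering being $\Spec R[y]/(y^p - y - f)$ with $\ZZ/p\ZZ$ acting by $y \mapsto y+1$ and $\varphi$-structure the semilinear lift $y \mapsto y + b$ --- and that such an object is split precisely when there is a single $c \in R$ with $\wp(c) = f$ and $\varphi(c) - c = b$. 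Given $(f,b)$ as above, choose $c_0$ with $\varphi(c_0) - c_0 = b$ by surjectivity of $\varphi - 1$; then $(\varphi - 1)(\wp(c_0) - f) = \wp(b) - \wp(b) = 0$, so $\wp(c_0) - f = \lambda$ for some $\lambda \in \ker(\varphi - 1) = k_1$; since $k_1$ is algebraically closed we may pick $\mu \in k_1$ with $\mu^p - \mu = \lambda$, and then $c := c_0 - \mu$ satisfies $\varphi(c) - c = b$ (as $\mu$ is $\varphi$-fixed) and $\wp(c) = \wp(c_0) - \wp(\mu) = (f + \lambda) - \lambda = f$, as required.

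I expect the main obstacle to be the bookkeeping underlying the third paragraph --- identifying $\varphi$-equivariant $\ZZ/p\ZZ$-torsors on $X$ with the pairs $(f,b)$ and making the notion of splitness precise, in particular the claim that a $\ZZ/p\ZZ$-equivariant $\varphi$-semilinear lift must have the form $y \mapsto y+b$ --- which is routine but must be carried out with care. The conceptual crux, by contrast, is short: the obstruction to descending the covering lives in the algebraically closed field $k_1 = \ker(\varphi - 1)$, where every Artin--Schreier equation has a solution.
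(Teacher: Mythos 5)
Your proof is correct, and it is at bottom the same mechanism as the paper's --- Artin--Schreier theory plus the exact sequence obtained by tensoring the Artin--Schreier sequence of one factor over $\FF_p$ with the other algebraically closed field --- but the packaging differs in two ways worth noting. For the splitting of $\ZZ/p\ZZ$-covers, the paper tensors the sequence for $k_1$ with $k_2$, applies the snake lemma to the resulting diagram with vertical maps $\varphi-1$, deduces $H^1(\varphi, k_1\otimes_{\FF_p}k_2)^{\varphi_1}=0$ from $H^1(\varphi,k_2)=0$, and then identifies this invariant group with $H^1((X/\Phi)_{\et},\ZZ/p\ZZ)$; you instead work at the level of explicit objects of $\FEt(X/\varphi_2)$, encoding a torsor with its descent datum as a pair $(f,b)$ with $\varphi_2(f)-f=b^p-b$ and splitting it by hand, first solving $(\varphi_2-1)(c_0)=b$ (surjectivity coming from tensoring the sequence for $k_2$ with $k_1$, so the roles of the two factors are swapped relative to the paper) and then correcting by an element of $\ker(\varphi_2-1)=k_1$ using that $k_1$ is algebraically closed. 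The cocycle-level argument buys you something concrete: you never need to identify the stacky $H^1$ with the invariant cokernel (which in the paper is compressed into ``by Artin--Schreier again'' and really hides a Hochschild--Serre-type step killing the $H^1(\langle\varphi\rangle, R^{\varphi})$ contribution), at the cost of the routine but necessary bookkeeping you flag --- and that bookkeeping does close: for a $\ZZ/p\ZZ$-torsor one has $S^{\ZZ/p\ZZ}=R$, so an equivariant semilinear lift indeed sends $y\mapsto y+b$ with $b\in R$. For $\Phi$-connectedness you also take a different route: the paper computes the bi-Frobenius invariants $(k_1\otimes_{\FF_p}k_2)^{\varphi,\varphi_1}=\FF_p$ (so the only $\varphi_1$-stable idempotents are $0,1$), whereas you identify $\pi_0(X)$ with $\widehat{\ZZ}$ via Lemma~\ref{L:integral tensor product} applied over $\overline{\FF}_q$ replaced by $\overline{\FF}_p$ and use density of the translation orbit; this is valid and parallels Remark~\ref{R:FF connected} rather than the paper's proof of this lemma.
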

\begin{proof}
Recall that $(k_1 \otimes_{\FF_p} k_2)^{\varphi}$ is the set of continuous functions from
$X$ to $\FF_p$ (see for example \cite[Corollary~3.1.4]{kedlaya-liu1}).
Since $k_2$ is flat over $\FF_p$,
\[
(k_1 \otimes_{\FF_p} k_2)^{\varphi,\varphi_1} 
= (k_1^{\varphi_1} \otimes_{\FF_p} k_2)^{\varphi} 
= (\FF_p \otimes_{\FF_p} k_2)^{\varphi_2} = k_2^{\varphi_2} = \FF_p;
\]
this implies that $X$ is $\Phi$-connected.

Since $k_1$ and $k_2$ are algebraically closed, we have Artin-Schreier exact sequences
\[
0 \to \FF_p \to k_i \stackrel{\varphi_1-1}{\to} k_i \to 0 \qquad (i=1,2).
\]
By tensoring the sequence with $i=1$ over $\FF_p$ with $k_2$, we obtain a commutative diagram with exact rows
\[
\xymatrix{
0 \ar[r] & k_2 \ar[r] \ar^{\varphi-1}[d] & k_1 \otimes_{\FF_p} k_2 \ar^{\varphi_1-1}[r] \ar^{\varphi-1}[d] & 
k_1 \otimes_{\FF_p} k_2 \ar[r] \ar^{\varphi-1}[d] & 0 \\
0 \ar[r] & k_2 \ar[r] & k_1 \otimes_{\FF_p} k_2 \ar^{\varphi_1-1}[r] & 
k_1 \otimes_{\FF_p} k_2 \ar[r] & 0 
}
\]
to which we may apply the snake lemma; this yields a surjective morphism
\[
0 = H^1(\varphi, k_2) \to H^1(\varphi, k_1 \otimes_{\FF_p} k_2)^{\varphi_1}
\]
whose target may be identified with $H^1((X/\Phi)_{\et}, \ZZ/p\ZZ)$ by Artin-Schreier again.
This proves that $X/\Phi$ has no nonsplit \'etale $\ZZ/p\ZZ$-cover, from which the claim follows.
\end{proof}

\section{Fargues--Fontaine curves}
\label{sec:FF curves}

We continue with various notations and statements about Fargues--Fontaine curves. We use standard notation for Huber rings and pairs and their adic spectra, as in \cite{weinstein-aws}.
 
\begin{hypothesis} \label{H:Robba rings}
For the remainder of the paper (except as specified),
let $L$ be an algebraically closed nonarchimedean field of characteristic $p$.
Fix a power $q$ of $p$ and an embedding $\FF_q \hookrightarrow \kappa_L$
(which lifts uniquely to an embedding $\FF_q \hookrightarrow L$)
and let $E$ be a local field with residue field $\FF_q$. Let $\varpi$ be a uniformizer of $E$.
Let $F$ be an algebraically closed nonarchimedean field containing $E$.
\end{hypothesis}

\begin{remark}
In \cite[Hypothesis~2.1]{kedlaya-noetherian} it is only asserted that $E$ must be a complete discretely valued field whose residue field \emph{contains} $\FF_q$, but almost every subsequent statement requires this containment to be an equality (as in \cite{fargues-fontaine}).
For instance, in \cite[Definition~2.2]{kedlaya-noetherian}, the expression of a general element of
$W(\frako_L) \otimes_{W(\FF_q)} \frako_E$ as a sum $\sum \varpi^n [\overline{x}_n]$ with $\overline{x}_n \in L$
depends on $E$ having residue field $\FF_q$.
\end{remark}

\begin{defn} \label{D:norms}
For $R$ a perfect $\FF_q$-algebra, define $W(R)_E := W(R) \otimes_{W(\FF_q)} \frako_E$.
For $I \subseteq (0, \infty)$ a closed interval, let $B^I_{L,E}$ denote the Fr\'echet completion of $W(\frako_L)_E[\varpi^{-1}][[\overline{x}]: \overline{x} \in L]$ for the family of multiplicative 
(see Lemma~\ref{L:curve over extension field}) norms
\[
\lambda_t\left( \sum_{n \in \ZZ} \varpi^n [\overline{x}_n]  \right) = \max\{ p^{-n} \left| \overline{x}_n \right|^t \}
\qquad (t \in I);
\]
this ring is a principal ideal domain \cite[Theorem~7.11]{kedlaya-noetherian}
 and a strongly noetherian Huber ring \cite[Theorem~4.10]{kedlaya-noetherian}.
For any given $x \in B^I_{L,E}$, the function $t \mapsto \log \lambda_t(x)$ is convex on $I$ \cite[Lemma~4.4]{kedlaya-noetherian} (see also Lemma~\ref{L:curve over extension field}), so $B^I_{L,E}$ is in fact a Banach ring for the norm $\max\{\lambda_r, \lambda_s\}$ for $I = [r,s]$. 
\end{defn}

\begin{lemma} \label{L:curve over extension field}
Let $\overline{\FF}_q$ be the algebraic closure of $\FF_q$ in $F$. Choose an extension of the chosen embedding $\FF_q \hookrightarrow L$ to an embedding $\overline{\FF}_q \hookrightarrow L$.
\begin{enumerate}
\item[(a)]
For $t>0$, the tensor product norm on $B^{[t,t]}_{L,E} \widehat{\otimes}_{W(\overline{\FF}_q)_E} \frako_F$ is multiplicative.
We again denote this norm by $\lambda_t$.
\item[(b)]
For $x \in B^I_{L,E} \widehat{\otimes}_{W(\overline{\FF}_q)_E} \frako_F$, the function $t \mapsto \log \lambda_t(x)$ on $I$ is continuous and convex.
\end{enumerate}
\end{lemma}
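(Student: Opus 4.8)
The plan is to deduce both parts from the multiplicativity of $\lambda_t$ on $B^{[t,t]}_{L,E}$ (Definition~\ref{D:norms}), using the associated graded ring as the main bookkeeping device. Recall the standard fact that a submultiplicative norm $\alpha$ on a Banach ring $S$ is multiplicative if and only if the associated graded ring $\mathrm{gr}^\alpha S = \bigoplus_{r>0}\{s:\alpha(s)\le r\}/\{s:\alpha(s)<r\}$ is an integral domain (it suffices to test nonvanishing of products on homogeneous elements). For (a), it is therefore enough to show that the associated graded ring of $B^{[t,t]}_{L,E}\widehat{\otimes}_{W(\overline{\FF}_q)_E}\frako_F$ for the tensor product norm is a domain. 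The inputs are: $\mathrm{gr}^{\lambda_t}(B^{[t,t]}_{L,E})$ is a domain (by the cited multiplicativity), with an explicit description from \cite{kedlaya-noetherian} as a graded algebra over $\mathrm{gr}(W(\overline{\FF}_q)_E[\varpi^{-1}])=\overline{\FF}_q[u,u^{-1}]$ ($u$ the class of $\varpi$), carrying the image of $\overline{\FF}_q$ in degree $0$; on the other side, $\mathrm{gr}(\frako_F)$ is the monoid algebra $\kappa_F[\Gamma_F^{\ge 0}]$, a domain, and it is flat over $\mathrm{gr}(W(\overline{\FF}_q)_E)=\overline{\FF}_q[u]$ since the latter is a principal ideal domain. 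One then wants to identify $\mathrm{gr}$ of the completed tensor product with the corresponding (completed) graded tensor product over $\overline{\FF}_q[u]$, and to see it is a domain: for the latter, write $\kappa_F[\Gamma_F^{\ge 0}]=\kappa_F\otimes_{\overline{\FF}_q}\overline{\FF}_q[\Gamma_F^{\ge 0}]$ and apply Lemma~\ref{L:integral tensor product} over the algebraically closed field $\overline{\FF}_q$ to the $\kappa_F$-factor, together with the fact that passing from $\overline{\FF}_q[u]$ to $\overline{\FF}_q[\Gamma_F^{\ge 0}]$ keeps a graded domain a domain because the explicit description of $\mathrm{gr}(B^{[t,t]}_{L,E})$ shows that $\varpi$ is not a proper power in the relevant graded fraction field.

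The one genuinely nonformal point — and the main obstacle — is controlling the completion, i.e.\ showing $\mathrm{gr}$ of the completed tensor product is (the completion of, hence equal to) the graded tensor product above rather than something larger. Here I would exploit that $F$, being algebraically closed and nonarchimedean, is perfectoid: in mixed characteristic one checks directly that Frobenius is surjective on $\frako_F/\varpi$ (a $p$-th root of a lift exists in the algebraically closed $F$ and automatically has norm $\le 1$), and in equal characteristic $F$ is perfect. Its tilt $F^\flat$ is then an algebraically closed nonarchimedean field of characteristic $p$ containing $\overline{\FF}_q$, and $B^{[t,t]}_{L,E}\widehat{\otimes}_{W(\overline{\FF}_q)_E}\frako_F$ is a ring of the same shape $B^{[t,t]}$ built from the $\overline{\FF}_q$-algebra $\frako_L\widehat{\otimes}_{\overline{\FF}_q}\frako_{F^\flat}$, which is an integral domain by Lemma~\ref{L:integral tensor product}; one can then run the arguments of \cite{kedlaya-noetherian} in this relative setting. (Alternatively, one may reduce to the case where $F$ is maximally complete in the sense of Definition~\ref{D:spherically complete}, in which case Banach modules over it admit orthonormal bases and the completed tensor product is transparent, and then descend along an isometric embedding of $F$ into such an extension.)

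For (b), once (a) is available for every $t\in I$, the argument is the same as for \cite[Lemma~4.4]{kedlaya-noetherian} applied to $B^I_{L,E}$ itself. Continuity and convexity of $t\mapsto \log\lambda_t(x)$ are first checked for $x$ in the dense $W(\overline{\FF}_q)_E[\varpi^{-1}]$-subalgebra generated by $W(\frako_L)_E$ and $\frako_F$: each such $x$ has an expansion whose $\lambda_t$ is a maximum, and in the limit a supremum, of functions of the form $t\mapsto c\,r^t$ with $r$ a norm of an element of $L$ or of $F$, so $\log\lambda_t(x)$ is a supremum of affine functions of $t$, hence convex; being finite on the closed interval $I$ it is then automatically continuous. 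The general case follows by passing to the limit: since $\lambda_t(x_n)\to\lambda_t(x)$ uniformly for $t\in I$ when $x_n\to x$ in $B^I_{L,E}\widehat{\otimes}_{W(\overline{\FF}_q)_E}\frako_F$, the function $t\mapsto\log\lambda_t(x)$ is a uniform limit of continuous convex functions, hence continuous and convex.
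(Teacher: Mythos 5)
Your strategy for (a) --- pass to associated graded rings and apply Lemma~\ref{L:integral tensor product} over $\overline{\FF}_q$ --- is exactly the paper's, and your concern about identifying $\Gr$ of the \emph{completed} tensor product with the graded tensor product is a legitimate one (the paper treats it tersely). However, the two devices you propose to handle it do not work as stated. For the tilting route: the ring in question is $B^{[t,t]}_{L,E}\widehat{\otimes}_{W(\overline{\FF}_q)_E}\frako_F$, not the base change $\widehat{\otimes}_E F$ covered by Remark~\ref{R:interval perfectoid}; and even granting a tilt, tilting identifies topological rings (norms only up to equivalence), whereas (a) concerns one specific norm, the tensor-product norm, so its multiplicativity is not transported by ``running the arguments of \cite{kedlaya-noetherian} in this relative setting'' --- and your claimed identification with a $B^{[t,t]}$ built from $\frako_L\widehat{\otimes}_{\overline{\FF}_q}\frako_{F^\flat}$ already presupposes a Teichm\"uller-type orthogonal expansion of the completed tensor product, which is the point at issue. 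For the maximally complete reduction, the descent step is missing: for an isometric embedding $F\hookrightarrow F'$, every $F$-presentation is an $F'$-presentation, so the tensor-product seminorm over $F'$ is a priori only $\leq$ the one over $F$; multiplicativity upstairs then gives $\lambda_t^{F}(xy)\geq\lambda_t^{F'}(x)\lambda_t^{F'}(y)$, which is weaker than what you need unless you also prove $\lambda_t^{F'}=\lambda_t^{F}$ on the image --- and that equality is essentially the explicit lower-bound statement (the analogue of Lemma~\ref{L:product norm}) whose proof was the reason for passing to $F'$ in the first place.

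Part (b) has a gap of the same nature. You treat $\log\lambda_t(x)$, for $x$ in the dense subalgebra, as a supremum of affine functions coming from ``an expansion,'' but the tensor-product seminorm is by definition an \emph{infimum} over presentations of such maxima, and an infimum of convex functions need not be convex. Your argument requires a single $t$-independent presentation (or formal expansion) computing $\lambda_t(x)$ for all $t$ simultaneously, i.e.\ precisely the attained-infimum statement of Lemma~\ref{L:product norm}, which the paper proves only in equal characteristic and which you do not establish here. The paper's proof of (b) avoids any global formula: it works locally at a single $t$, approximates $x$ by a finite sum of simple tensors modulo terms of smaller norm near $t$, and reads off the two one-sided slopes of $\log\lambda_t(x)$ from the highest and lowest $\overline{\varpi}$-degrees of the graded image, using the domain property from (a). To repair your proposal you should either adopt that local Newton-polygon argument, or prove a mixed-characteristic analogue of Lemma~\ref{L:product norm} (an orthogonalization over $W(\overline{\FF}_q)_E$), which would then also close the completion issue in (a) directly.
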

\begin{proof}
For any ring $A$ equipped with a submultiplicative norm $\alpha$, define the associated graded ring $\Gr A$
 by the formula
\[
\Gr A = \bigoplus_{r>0} \Gr^r A, \qquad \Gr^r A = \frac{\{x \in A: \alpha(x) \leq r\}}{\{x \in A: \alpha(x) < r \}}.
\]
The ring $\Gr L$ consists of one graded component for each $r$ in the value group $\left| L^\times \right|$, each of which is a one-dimensional vector space over $\kappa_L$. Since $L$ is a nonarchimedean field, $\Gr L$ is an integral domain. We may then write
\[
\Gr B^{[t,t]}_{L,E} \cong (\Gr L)[\overline{\varpi}]
\]
with $\Gr L$ rescaled by $t$ (that is, place $\Gr^r$ in degree $r^t$ rather than $r$)
and $\overline{\varpi}$ placed in degree $p^{-1}$. This is again an integral domain. Finally, 
by Lemma~\ref{L:integral tensor product}, 
\[
\Gr (B^{[t,t]}_{L,E} \widehat{\otimes}_{W(\overline{\FF}_q)_E} \frako_F)
\cong (\Gr L)[\overline{\varpi}^{\Gamma_L}] \otimes_{\overline{\FF}_q} \kappa_F
\]
is integral, and so the tensor product norm on $B^{[t,t]}_{L,E} \widehat{\otimes}_{W(\overline{\FF}_q)_E} \frako_F$ is multiplicative. This yields (a).

To check (b), we may work locally around a single $t \in I$.
In particular, we may write $x$ as a sum of simple tensors, then ignore any of those that do not
contribute to the image of $x$ in $\Gr (B^{[t,t]}_{L,E} \widehat{\otimes}_{W(\overline{\FF}_q)_E} \frako_F)$.
From the upper and lower degrees of this image, viewed as a Laurent-Puiseux polynomial in $\overline{\varpi}$,
we may read off the slopes of $t \mapsto \log \lambda_t(x)$ on either side of $t$.
\end{proof}

\begin{cor} \label{C:units}
With notation as in Lemma~\ref{L:curve over extension field}, let
$J$ be a (possibly singleton) closed interval contained in the interior of $I$. 
Then $x \in  B^I_{L,E} \widehat{\otimes}_{W(\overline{\FF}_q)_E} \frako_F$
is a unit in $B^J_{L,E} \widehat{\otimes}_{W(\overline{\FF}_q)_E} \frako_F$
if and only if $t \mapsto \log \lambda_t(x)$ is an affine function of $t$ on some neighborhood of $J$ in $I$.
\end{cor}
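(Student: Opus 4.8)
The plan is to use the multiplicative norms $\lambda_t$ and the convexity result of Lemma~\ref{L:curve over extension field}(b) to reduce the statement to a slope computation. Write $A_J := B^J_{L,E} \widehat{\otimes}_{W(\overline{\FF}_q)_E} \frako_F$, and similarly $A_I$; note that $A_J$ carries the Banach norm $\max\{\lambda_r,\lambda_s\}$ when $J=[r,s]$ (the singleton case being that of a single multiplicative norm), and that each $\lambda_t$ for $t \in J$ is multiplicative on $A_J$ by part~(a). First I would handle the easy direction: if $x$ is a unit in $A_J$, then $\lambda_t(x)\lambda_t(x^{-1}) = \lambda_t(1) = 1$ for all $t$ in a neighborhood of $J$ (enlarging $J$ slightly inside $I$ so that everything is still defined), so $\log\lambda_t(x)$ and $\log\lambda_t(x^{-1})$ are convex functions summing to the constant $0$; a sum of two convex functions that is affine forces each to be affine, giving the claim.

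For the converse, suppose $t \mapsto \log\lambda_t(x)$ is affine on a neighborhood $N$ of $J$ in $I$. The strategy is to construct the inverse directly as a convergent series. After multiplying $x$ by a unit of the form $\varpi^{m}[\overline{x}_0]$ (whose $\log\lambda_t$ is visibly affine), I may assume $\log\lambda_t(x) \equiv 0$ on $N$, i.e. $\lambda_t(x) = 1$ for all $t \in N$. Now pick a representative leading term: examining the image of $x$ in the graded ring $\Gr A_{[t,t]} \cong (\Gr L)[\overline{\varpi}^{\Gamma_L}] \otimes_{\overline{\FF}_q} \kappa_F$ as in the proof of Lemma~\ref{L:curve over extension field}, the fact that $\log\lambda_t(x)$ is affine across $t$ means this image is a single monomial in $\overline{\varpi}$ (upper and lower degrees coincide), hence a unit in the graded ring after inverting the generator of $\Gr L$. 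Lift a corresponding simple tensor $u \in A_N$ with $\lambda_t(u) = 1$ and $\lambda_t(x - u) < 1$ for all $t$ in a slightly smaller interval; then $x u^{-1} = 1 - y$ with $\lambda_t(y) < 1$ on that interval, and by compactness $\sup_{t}\lambda_t(y) =: c < 1$, so $\sum_{n \geq 0} y^n$ converges in the Banach ring $A_J$ and inverts $xu^{-1}$. Since $u$ is manifestly a unit (a single invertible simple tensor), $x$ is a unit in $A_J$.

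The main obstacle is the middle step of the converse: extracting an honest invertible ``leading term'' $u$ from the hypothesis on the Newton polygon. The affineness of $\log\lambda_t(x)$ only tells us that the graded image is a monomial in $\overline{\varpi}$ with coefficient a nonzero homogeneous element of $(\Gr L) \otimes_{\overline{\FF}_q}\kappa_F$; one must check this coefficient is actually a unit in the graded ring (it is, since $\Gr L$ is a graded field after inverting a generator, and tensoring with the field $\kappa_F$ preserves this), and then choose a lift $u \in A_N$ that is literally a product of a power of $\varpi$, a Teichm\"uller element, and a unit coming from $\frako_F$ — so that $u$ is visibly invertible and not merely invertible in the graded sense. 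Care is also needed that shrinking $J$ inside $N$ at each stage still leaves $J$ in the interior of $I$, which is exactly the hypothesis provided. Everything else is a routine geometric-series argument in a Banach ring.
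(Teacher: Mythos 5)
Your forward (``easy'') direction contains a genuine gap, and it is exactly the point where the paper has to do real work. You assert that if $x$ is a unit in $B^J_{L,E} \widehat{\otimes}_{W(\overline{\FF}_q)_E} \frako_F$ then $\lambda_t(x)\lambda_t(x^{-1})=1$ for all $t$ in a neighborhood of $J$, ``enlarging $J$ slightly inside $I$ so that everything is still defined.'' But nothing guarantees that $x^{-1}$, which a priori lies only in the ring attached to $J$, extends to the ring attached to any strictly larger interval $J'$; that extension is essentially what the corollary asserts, so the parenthetical begs the question. As written, your argument only gives affineness of $t \mapsto \log\lambda_t(x)$ on $J$ itself, which is strictly weaker than the statement and is vacuous in the crucial case where $J$ is a singleton (the case actually invoked in the proof of Lemma~\ref{L:convex}). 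The paper closes this gap with a small but essential approximation idea: choose $z \in B^{J'}_{L,E} \widehat{\otimes}_{W(\overline{\FF}_q)_E} \frako_F$, for $J'$ a closed interval containing $J$ in its interior, close enough to $x^{-1}$ that $\lambda_t(1-xz)<1$ for $t \in J$; by Lemma~\ref{L:curve over extension field}(b) this persists on a possibly smaller but still strictly larger interval, on which $\log\lambda_t(x)+\log\lambda_t(z)=0$, and convexity of both summands then forces both to be affine there --- a genuine neighborhood of $J$.

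Your converse direction follows essentially the paper's route (the paper reduces to singleton $J$, observes that affineness concentrates the image of $x$ in the graded ring in a single degree, declares that leading term invertible, and finishes with the same $1-y$ geometric-series argument), so there is no divergence of method to report there. One caution, however: your parenthetical justification that the leading coefficient is a unit ``since $\Gr L$ is a graded field \dots and tensoring with the field $\kappa_F$ preserves this'' does not hold up as stated. The ring $(\Gr L)\otimes_{\overline{\FF}_q}\kappa_F$ is a graded domain, but its homogeneous components are twists of $\kappa_L \otimes_{\overline{\FF}_q} \kappa_F$, which is a domain and not a field once both residue fields contain elements transcendental over $\overline{\FF}_q$; so a nonzero element concentrated in a single degree need not be invertible in the graded ring. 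The paper's own proof asserts invertibility of the single-degree leading term in the same brisk way, so you are tracking its argument here, but your offered reason does not actually close that step, and any careful write-up (yours or a reading of the paper's) needs to address what makes the leading coefficient, and not merely its degree, invertible.
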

\begin{proof}
Suppose first that $x$ admits the inverse $y$ in $B^J_{L,E} \widehat{\otimes}_{W(\overline{\FF}_q)_E} \frako_F$. Let $J'$ be some closed interval contained in the interior of $I$ which contains $J$ in its interior.
We can then approximate $y$ by some element $z \in B^{J'}_{L,E} \widehat{\otimes}_{W(\overline{\FF}_q)_E} \frako_F$
in such a way that $\lambda_t(y-z) < \lambda_t(x)^{-1}$ for all $t \in J$.
In particular, $\lambda_t(1-xz) = \lambda_t(x(y-z)) < 1$ for all $t \in J$; by Lemma~\ref{L:curve over extension field}(b), for a suitable choice of $J'$ this remains true for all $t \in J'$. For $t \in J'$, we then have
\[
\lambda_t(x) + \lambda_t(z) = \lambda_t(xz) = 1
\]
but by (b) the functions $t \mapsto \lambda_t(x)$ and $t \mapsto \lambda_t(z)$ are both convex. They must therefore both be affine, proving the claim.

In the other direction, it suffices to check that if $J$ is a singleton interval
and $t \mapsto \log \lambda_t(x)$ is an affine function of $t$ on some neighborhood of $J$ in $I$,
then $x$ is a unit in $B^{J'}_{L,E} \widehat{\otimes}_{W(\overline{\FF}_q)_E} \frako_F$
for some closed interval $J'$ containing $J$ in its interior.
The image of $x$ in $\Gr(B^J_{L,E} \widehat{\otimes}_{W(\overline{\FF}_q)_E} \frako_F)$
must then be an element of $(\Gr L) \otimes_{\overline{\FF}_q} \frako_F$ times some power of $\varpi$.
The element of $(\Gr L) \otimes_{\overline{\FF}_q} \frako_F$ must be placed in a single degree, and hence must be a unit.
For suitable $J'$,
we can then construct $y \in B^I_{L,E} \widehat{\otimes}_{W(\overline{\FF}_q)_E} \frako_F$
for which $\lambda_t(1-xy) < 1$ for $t \in J'$.
Then $xy$ is a unit, as then is $x$.
\end{proof}

\begin{remark}
The ring $B^{I}_{L,E} \widehat{\otimes}_{W(\overline{\FF}_q)_E} \frako_F$ does not share some of the more
refined ring-theoretic properties of $B^I_{L,E}$, essentially due to the value group of $F$ not being discrete. Notably, $B^{I}_{L,E} \widehat{\otimes}_{W(\overline{\FF}_q)_E} \frako_F$ is not noetherian.
\end{remark}

\begin{remark} \label{R:char p curve}
Suppose that $E$ is of characteristic $p$. Then for any Banach $E$-algebra $A$,
the ring $W(\frako_L)_E[\varpi^{-1}][[\overline{x}]: \overline{x} \in L] \otimes_E A$
contains $L \otimes_{\FF_q} A$ as a dense subring.
In the case where $A = F$,
the restriction to $L \otimes_{\FF_q} A$ of the norm $\lambda_t$ from Lemma~\ref{L:curve over extension field} coincides with the tensor product norm for the given norm on $F$ and the $t$-th power of the given norm on $L$.
\end{remark}

\begin{remark} \label{R:interval perfectoid}
For any perfectoid $E$-algebra $A$, $B^I_{L,E} \widehat{\otimes}_E A$ is also perfectoid.
Moreover, for any topologically nilpotent unit $t \in A^\flat$ there is a canonical isomorphism
\[
(B^I_{L,E} \widehat{\otimes}_E A)^\flat \cong
B^I_{L, \FF_q((t))} \widehat{\otimes}_{\FF_q((t))} A^\flat.
\]
\end{remark}

\begin{defn}
Let $Y_{L,E}$ be the inductive limit of the adic spaces $\Spa(B^I_{L,E}, B^{I,\circ}_{L,E})$ as $I$ varies over all
closed intervals in $(0, \infty)$. The $q$-power Frobenius maps $\varphi_L: B^I_{L,E} \to B^{I^{1/q}}_{L,E}$ induces an isomorphism
$\varphi_L^*: Y_{L,E} \to Y_{L,E}$. The group $\varphi_L^{* \ZZ}$ acts properly discontinuously on $Y_{L}$;
define the \emph{adic Fargues--Fontaine curve} $X_{L}$ to be the quotient by this action.
\end{defn}

\begin{remark} \label{R:FF connected}
By Lemma~\ref{L:curve over extension field}, $B^I_{L,E} \widehat{\otimes}_{W(\FF_q)_E} \frako_F$ is not connected; its connected components are each isomorphic to $B^I_{L,E} \widehat{\otimes}_{W(\overline{\FF}_q)_E} \frako_F$
and, as a topological space, form a principal homogenous space for $G_{\FF_q} \cong \widehat{\ZZ}$.
The same description then applies to the connected components of $Y_{L,E} \times_E F$. However, the action of $\varphi_L^*$ on this space is nontrivial: it is via the action of the dense subgroup $\ZZ \subseteq \widehat{\ZZ}$.
As a result, $X_{L,E} \times_E F$ is connected.
\end{remark}

\begin{remark} \label{R:diamond interpretation}
Suppose that $E$ is of characteristic $p$ and that $A$ is a Banach $E$-algebra. 
Let
$R^+$ be the completion of $\frako_L \otimes_{\FF_q} A^\circ$ for the $(\varpi_L, \varpi)$-topology for some
(any) topologically nilpotent unit $\varpi_L$ of $L$, and put $R = R^+[\varpi_L^{-1}, \varpi^{-1}]$.
Then there is a canonical identification
\[
Y_L \times_E \Spa(A,A^\circ) \cong \{v \in \Spa(R,R^+): v(\varpi_L), v(\varpi) < 1\}.
\]
See Lemma~\ref{L:Pfd products} for an expansion of this remark.
\end{remark}

\section{Vector bundles}
\label{sec:vector bundles}

We next gather some statements about vector bundles on Fargues--Fontaine curves,
and deduce the simple connectivity of the Fargues--Fontaine curve associated to $\CC_p$.

\begin{defn} \label{D:twisting bundle1}
For $n \in \ZZ$, let $\calO(n)$ be the line bundle on $X_{L,E}$ corresponding to the $\varphi_L$-equivariant 
line bundle on $Y_{L,E}$ whose underlying bundle is trivial with a generator $\bv$ satisfying
$\varphi_L^* \bv = \varpi^{-n} \bv$.
\end{defn}

\begin{lemma} \label{L:vector bundle to phi-module}
For any adic affinoid space $U$ over $E$, the category of vector bundles on $X_{L,E} \times_{E} U$ is equivalent to the category of $\varphi$-modules over $\calO(Y_{L,E} \times_{E} U)$ (i.e., finite projective modules over the ring $\calO(Y_{L,E} \times_{E} U)$ equipped with isomorphisms with their $\varphi$-pullbacks).
\end{lemma}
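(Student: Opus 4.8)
The plan is to obtain the equivalence from descent along the quotient map $Y \to X$, where I write $Y := Y_{L,E}\times_E U$ and $X := X_{L,E}\times_E U$, so that $X = Y/\varphi_L^{*\ZZ}$ with $\varphi_L^{*\ZZ}$ acting freely (since $\varphi_L^*$ has infinite order) and properly discontinuously. The target equivalence will be the composite of two equivalences: first, vector bundles on $X$ are the same as $\varphi_L^*$-equivariant vector bundles on $Y$, i.e.\ pairs $(\calE,\iota)$ with $\iota\colon\varphi_L^*\calE\xrightarrow{\sim}\calE$; second, vector bundles on $Y$ are the same as finite projective modules over $\calO(Y)=\calO(Y_{L,E}\times_E U)$, compatibly with $\varphi_L^*$-pullback on one side and $\varphi$-pullback on the other. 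Combining the two, a $\varphi_L^*$-equivariant vector bundle on $Y$ is exactly a finite projective $\calO(Y)$-module equipped with an isomorphism to its $\varphi$-pullback, which is what is claimed.

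For the first equivalence, proper discontinuity of the free $\ZZ$-action gives an open cover of $X$ by subsets $V$ whose preimage in $Y$ is $\bigsqcup_{n\in\ZZ}\varphi_L^{*n}(\widetilde{V})$ for an open $\widetilde{V}\subseteq Y$ mapping isomorphically onto $V$. A vector bundle on $X$ is then a vector bundle on each $\widetilde{V}$ with identifications on overlaps; transporting through the $\varphi_L^*$-translates, this is precisely a vector bundle $\calE$ on $Y$ together with an isomorphism $\varphi_L^*\calE\cong\calE$ (no cocycle condition, as $\ZZ$ is free). This is a routine gluing (Zariski descent) argument.

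The second equivalence is the substantive step. Exhaust $Y$ by an increasing sequence of affinoids $Y_{I_n}=\Spa(A_{I_n},A_{I_n}^+)$ with $A_I:=B^I_{L,E}\widehat{\otimes}_E\calO(U)$, whose colimit is $Y$ and for which $\calO(Y)=\varprojlim_n A_{I_n}$. Each $A_I$ is a sheafy Huber ring with acyclic structure sheaf on $Y_I$, over which finite projective modules correspond to vector bundles on $Y_I$: when $\calO(U)$ is an affinoid $E$-algebra this follows from $B^I_{L,E}$ being strongly noetherian \cite[Theorem~4.10]{kedlaya-noetherian} (hence so is $A_I$, being a quotient of a Tate algebra over it) via the adic version of Kiehl's theorem \cite{huber-book}; when $\calO(U)$ is perfectoid, $A_I$ is perfectoid by Remark~\ref{R:interval perfectoid} and one uses instead sheafyness and acyclicity of perfectoid affinoids \cite{kedlaya-liu1}. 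The transition maps $A_I\to A_{I'}$ for $I'\subseteq I$ have dense image, since the subring used to define $B^I_{L,E}$ in Definition~\ref{D:norms} stays dense after $\widehat{\otimes}_E\calO(U)$, so $Y$ is quasi-Stein. The standard quasi-Stein argument --- approximate sections over $Y_{I_1}$ by global ones, then use vanishing of higher cohomology on the $Y_{I_n}$ to promote finitely many of them to a generating system compatibly at every level --- then shows that $\calE\mapsto\Gamma(Y,\calE)$ is an equivalence onto finite projective $\calO(Y)$-modules, with $\Gamma(Y,\calE)\otimes_{\calO(Y)}A_I\cong\Gamma(Y_I,\calE|_{Y_I})$; compatibility with $\varphi_L^*$ is immediate from how $\varphi_L$ acts on the $A_I$.

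I expect this last point, in the relative setting over $U$, to be the main obstacle. In the absolute case $U=\Spa(E,\frako_E)$ one has $A_I=B^I_{L,E}$, a principal ideal domain \cite[Theorem~7.11]{kedlaya-noetherian}, and $\calO(Y)$ is a B\'ezout domain, so ``finite projective'' collapses to ``finite free'' and the gluing across the $Y_{I_n}$ is elementary; this is essentially already in the literature on Fargues--Fontaine curves. Over a general $U$ one must instead run the argument through the coherent-sheaf formalism for strongly noetherian (or perfectoid) adic spaces, and take care with base change along $A_I\to A_{I'}$ (which need not be flat); in particular it is not a formal consequence of the case of a point. By contrast, the descent in the first equivalence and the verification that the two equivalences match the $\varphi$-structures are routine.
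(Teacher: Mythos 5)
Your first step (vector bundles on $X_{L,E}\times_E U$ are the same as $\varphi$-equivariant vector bundles on $Y_{L,E}\times_E U$) is exactly the paper's first step, and your overall skeleton matches the paper's proof; the problem is in your second equivalence, which is precisely where you flag the difficulty and then wave it through. The ``standard quasi-Stein argument'' (Theorems A and B: vanishing of higher cohomology, density of global sections in sections over each $Y_{I_n}$, generation of fibers) does \emph{not} show that $\Gamma(Y,\calE)$ is a finitely generated, let alone finite projective, $\calO(Y)$-module, nor that $\Gamma(Y,\calE)\otimes_{\calO(Y)}A_I\to\Gamma(Y_I,\calE)$ is an isomorphism; what that argument gives for free is an identification of vector bundles with \emph{coadmissible} modules (inverse limits of finite modules over the $A_{I_n}$), not with finite projective $\calO(Y)$-modules. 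Your step ``promote finitely many of them to a generating system compatibly at every level'' is exactly the nonformal finiteness assertion, and it needs a uniform control across the infinitely many $Y_{I_n}$ that cohomology vanishing alone does not provide (already over a point, line bundles on the open unit disc over a non-spherically-complete field show how delicate generation and finiteness questions are on quasi-Stein spaces). Moreover you assert the equivalence for \emph{all} vector bundles on $Y$, discarding the $\varphi$-structure, which is stronger than what the lemma requires and is not what the paper proves.

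The paper keeps the $\varphi$-structure at exactly the point where you drop it: the quasi-Stein property of $Y_{L,E}\times_E U$ (in the sense of \cite[\S 2.6]{kedlaya-liu2}) yields only a fully faithful functor from $\varphi$-modules over $\calO(Y_{L,E}\times_E U)$ to $\varphi$-equivariant bundles, and essential surjectivity is reduced to showing that the global sections of a $\varphi$-equivariant bundle are finitely generated. This finiteness is obtained from the fact that $Y_{L,E}\times_E U$ admits a \emph{locally finite} covering by the translates $\Spa(B^{[tq^n,tq^{n+1}]}_{L,E},B^{[tq^n,tq^{n+1}],\circ}_{L,E})\times_E U$ of a single affinoid, to which \cite[Lemma~2.6.15]{kedlaya-liu2} is applied; the $\varphi$-equivariance identifies the restrictions of the bundle to all members of this covering, which is what supplies the uniformity behind the finiteness. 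To repair your write-up, either invoke such a finiteness statement explicitly or restrict to $\varphi$-equivariant bundles and exploit the locally finite covering as the paper does; as written, the key finiteness claim is unjustified. (A secondary issue: your sheafiness discussion treats only $U$ topologically of finite type over $E$ or perfectoid, while the lemma allows an arbitrary adic affinoid $U$ over $E$; but this is minor compared with the finiteness gap.)
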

\begin{proof}
It is apparent that the category of vector bundles on $X_{L,E} \times_{E} U$ is equivalent to the category of vector bundles on $Y_{L,E} \times_{E} U$ equipped with isomorphisms with their $\varphi$-pullbacks.
Since $Y_{L,E} \times_{E} U$ is a quasi-Stein space in the sense of \cite[\S 2.6]{kedlaya-liu2},
we obtain a fully faithful functor from 
the category of $\varphi$-modules over $\calO(Y_{L,E} \times_{E} U)$ 
to the category of vector bundles on $Y_{L,E} \times_{E} U$ equipped with isomorphisms with their $\varphi$-pullbacks. To check that this is essentially surjective, we must show that given a vector bundle on $Y_{L,E} \times_{E} U$ equipped with isomorphisms with their $\varphi$-pullbacks, the global sections form a finitely generated module over $\calO(Y_{L,E} \times_{E} U)$ (as then \cite[Corollary~2.6.8]{kedlaya-liu2} implies that the module is also projective). For this, 
note that the space $Y_{L,E} \times_{E} U$ admits a locally finite covering by spaces of the form $\Spa(B^{I}_{L,E}, B^{I,\circ}_{L,E}) \times_{E} U$ (e.g., by taking $I = [tq^n, tq^{n+1}]$ for $t$ fixed and $n$ varying over $\ZZ$),
so we may apply \cite[Lemma~2.6.15]{kedlaya-liu2} to conclude.
\end{proof}

\begin{remark}
It is also possible to prove Lemma~\ref{L:vector bundle to phi-module} by showing that $\calO(1)$ is an ample line bundle on $X_{L,E} \times_{E} U$, as in \cite[\S 6]{kedlaya-liu1}. We omit further details here.
\end{remark}

\begin{defn} \label{D:twisting bundle2}
For $n$ a positive integer, let $X_{L,E,n}$ be the quotient of $Y_{L,E}$ by the action of 
$\varphi_L^{* n \ZZ}$. Let $\pi_n: X_{L,E,n} \to X_{L,E}$ be the natural projection; it is a connected $n$-fold \'etale cover which splits upon base extension from $E$ to $F$ (see Remark~\ref{R:FF connected}).

For $d = \frac{r}{s} \in \QQ$ written in lowest terms (so that $r,s \in \ZZ$, $\gcd(r,s) = 1$, and $s>0$),
let $\calO(d)$ be the vector bundle on $X_{L,E}$ defined as follows. Start with 
a trivial line bundle on $Y_{L,E}$ with a generator $\bv$. As in Definition~\ref{D:twisting bundle1},
promote this to a $\varphi_L^s$-equivariant line bundle by specifying that $\varphi_L^{s*} \bv = \varpi^{-r} \bv$;
this descends to a line bundle on $X_{L,E,s}$. Then push forward along $\pi_s$ to obtain $\calO(d)$; for $d \in \ZZ$, this agrees with Definition~\ref{D:twisting bundle1}.
\end{defn}

\begin{lemma} \label{L:twisting sections}
For $d \in \QQ$, the following statements hold.
\begin{enumerate}
\item[(a)]
If $d=0$, then $H^0(X_{L,E}, \calO(d)) = E$.
\item[(b)]
If $d>0$, then $H^0(X_{L,E}, \calO(d)) \neq 0$.
\item[(c)]
If $d>d'$, then $\Hom(\calO(d), \calO(d')) = 0$.
\item[(d)]
For any positive integer $m$, $\calO(d)^{\otimes m}$ is isomorphic to a direct sum of copies of $\calO(dm)$.
\end{enumerate}
\end{lemma}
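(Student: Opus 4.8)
The plan is to translate the whole statement into the language of $\varphi$-modules via Lemma~\ref{L:vector bundle to phi-module} (applied with $U = \Spa(E,\frako_E)$), under which $\calO(n)$ for $n\in\ZZ$ becomes the free rank-one module over $B := \calO(Y_{L,E}) = \bigcap_I B^I_{L,E}$ with basis $\bv$ and $\varphi_L(\bv)=\varpi^{-n}\bv$, while for $d=r/s$ in lowest terms $\calO(d)=\pi_{s*}\mathcal{L}_d$, where $\mathcal{L}_d$ is the line bundle on the Fargues--Fontaine curve $X_{L,E,s}$ (itself of the type treated above, over the unramified extension of $E$ of degree $s$) defined by $\varphi_L^s(\bv)=\varpi^{-r}\bv$. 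This yields the identifications $H^0(X_{L,E},\calO(d))=\{f\in B:\varphi_L^s(f)=\varpi^rf\}$; $\operatorname{End}(\calO_N(a))=H^0(X_{L,E,N},\calO_N(0))$ for the line bundle $\calO_N(m)$ on $X_{L,E,N}$ defined by $\varphi_L^N(\bv)=\varpi^{-m}\bv$; and $\pi_N^*\calO(d)\cong\calO_N(Nd)^{\oplus s}$ whenever $s\mid N$, the last point holding because the deck group of $X_{L,E,N}/X_{L,E}$ fixes the scalar $\varpi$, so the $s$ Galois conjugates of $\mathcal{L}_d$ all coincide.

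With this dictionary, (a) and (b) are quick. For (a): $\varpi\in\frako_E$ and the $q$-power map are both trivial on $\FF_q$, so in the Teichm\"uller expansion $f=\sum_n\varpi^n[\overline{x}_n]$ of a $\varphi_L$-fixed $f$ every $\overline{x}_n$ is fixed by $x\mapsto x^q$, hence lies in $\FF_q$; the convergence condition defining $B$ then forces $\overline{x}_n=0$ for $n\ll 0$, whence $f\in W(\FF_q)_E[\varpi^{-1}]=E$. For (b): given any fixed $\overline{y}\in L$ with $0<|\overline{y}|<1$, the series $f=\sum_{k\in\ZZ}\varpi^{-rk}[\overline{y}^{q^{sk}}]$ converges in each $B^I_{L,E}$ (the terms tend to $0$ as $k\to+\infty$ since $|\overline{y}|^{tq^{sk}}\to 0$ doubly exponentially, and as $k\to-\infty$ since $p^{rk}\to 0$), is nonzero, and satisfies $\varphi_L^s(f)=\varpi^rf$; alternatively, one may note that $\mathcal{L}_d$ has degree $r>0$ and invoke the isomorphism of the Picard group of $X_{L,E,s}$ with $\ZZ$ recalled in the introduction.

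The content is in (c). I would first reduce, by pulling $\Hom(\calO(d),\calO(d'))$ back along the faithful functor $\pi_N^*$ for $N$ a common multiple of the two denominators, to the statement that \emph{for $c>0$ and $N\geq 1$ there is no nonzero $g\in B$ with $\varphi_L^N(g)=\varpi^{-c}g$}: indeed such a pullback is a matrix of maps $\calO_N(Nd)\to\calO_N(Nd')$, i.e.\ of elements $g$ with $\varphi_L^N(g)=\varpi^{N(d'-d)}g$ and $N(d'-d)<0$. To rule out such a $g$, apply $\lambda_t$: since $\lambda_t\circ\varphi_L=\lambda_{qt}$ and $\lambda_t(\varpi^{-c}\,\cdot\,)=p^c\lambda_t(\,\cdot\,)$, we get $\log\lambda_{q^Nt}(g)=\log\lambda_t(g)+c\log p$, so $\log\lambda_{q^{-Nk}t_0}(g)\to-\infty$ as $k\to\infty$; but $t\mapsto\log\lambda_t(g)$ is a finite convex function on $(0,\infty)$ by Definition~\ref{D:norms} and Lemma~\ref{L:curve over extension field}, and such a function cannot tend to $-\infty$ at $0^+$ (compare any two points $t_1<t_2$ of its domain via the convexity inequality at points $t<t_1$). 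Hence $g=0$, which gives (c).

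For (d): by the analogue of Grothendieck's splitting theorem discussed in the introduction, write $\calO(d)^{\otimes m}\cong\bigoplus_i\calO(e_i)$, and choose $N$ divisible by $s$ and by every positive integer $\leq\rank(\calO(d)^{\otimes m})=s^m$, so that all denominators of the $e_i$ divide $N$. On $X_{L,E,N}$ we then obtain $\bigoplus_i\calO_N(Ne_i)^{\oplus s_i}\cong\pi_N^*(\calO(d)^{\otimes m})\cong(\calO_N(Nd)^{\oplus s})^{\otimes m}\cong\calO_N(Ndm)^{\oplus s^m}$. Since each $\calO_N(a)$ has endomorphism ring $H^0(X_{L,E,N},\calO_N(0))$, a field by part (a) applied to $X_{L,E,N}$, each $\calO_N(a)$ is indecomposable, so Krull--Schmidt on $X_{L,E,N}$ forces $Ne_i=Ndm$ for all $i$; thus $e_i=dm$ for all $i$, and comparing ranks identifies $\calO(d)^{\otimes m}$ with the appropriate direct sum of copies of $\calO(dm)$. (One could instead invoke that $\calO(d)$ is stable and that tensor products of semistable bundles on a Fargues--Fontaine curve are semistable.) The main obstacle I anticipate is not any single estimate but the bookkeeping behind the dictionary of the first paragraph---in particular the compatibility of Lemma~\ref{L:vector bundle to phi-module} with pushforward along $\pi_s$ and the identity $\pi_N^*\calO(d)\cong\calO_N(Nd)^{\oplus s}$; once those are in place, the convexity argument for (c) and the Krull--Schmidt argument for (d) are both short.
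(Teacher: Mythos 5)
The paper does not prove this lemma at all: it simply cites Fargues--Fontaine \cite[\S 8.2]{fargues-fontaine}, so your self-contained argument via the $\varphi$-module dictionary is a genuinely different route. Most of it is sound and in the spirit of the paper's own techniques: the identification $\pi_N^*\calO(d) \cong \calO_N(Nd)^{\oplus s}$ (valid because $\varpi$ is Frobenius-invariant), the explicit convergent series in (b), and above all the argument for (c), where the relation $\lambda_t \circ \varphi_L = \lambda_{qt}$ turns $\varphi_L^N(g) = \varpi^{-c}g$ into the affine recursion $\log\lambda_{q^Nt}(g) = \log\lambda_t(g) + c\log p$, which is incompatible with convexity and finiteness of $t \mapsto \log\lambda_t(g)$ on $(0,\infty)$ (compare the convexity arguments of Lemma~\ref{L:curve over extension field} and Corollary~\ref{C:units}). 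That gives a clean, quantitative proof of the Hom-vanishing that the paper outsources.

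Two steps need shoring up. First, in (a) you compare Teichm\"uller coefficients of a $\varphi_L$-fixed element of $B = \calO(Y_{L,E})$, but the existence and uniqueness of an expansion $\sum_n \varpi^n[\overline{x}_n]$ is only available off the shelf on $W(\frako_L)_E[\varpi^{-1}]$ and its $\varpi$-adic completion, not on the Fr\'echet completions $B^I$; as written the coefficientwise argument is not justified. The standard repair is to first use $\lambda_{qt}(f) = \lambda_t(f)$ together with convexity to show that $t \mapsto \log\lambda_t(f)$ is constant, deduce that $f$ is a bounded element of $B$ and hence lies in the subring where expansions are unique, and only then compare coefficients. Second, in (d) you invoke Krull--Schmidt on $X_{L,E,N}$, but the category of vector bundles there is not Hom-finite (e.g.\ $H^0(\calO_N(1))$ is infinite-dimensional), so Krull--Schmidt is not free; it is also unnecessary: given $\bigoplus_i \calO_N(Ne_i)^{\oplus s_i} \cong \calO_N(Ndm)^{\oplus s^m}$, the vanishing from (c) applied to the composition of each inclusion $\calO_N(Ne_i) \hookrightarrow$ (left side) with the isomorphism, and of the isomorphism with each projection, forces $Ne_i = Ndm$ for all $i$ directly. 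With these repairs the proposal is a correct alternative to the citation.
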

\begin{proof}
See for instance \cite[\S 8.2]{fargues-fontaine}.
\end{proof}

\begin{theorem} \label{T:vb classification}
Suppose that $L$ is algebraically closed. Then every vector bundle on $X_{L,E}$ splits as a direct sum $\bigoplus_i \calO(d_i)$ for some $d_i \in \QQ$.
\end{theorem}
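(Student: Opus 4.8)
The plan is to prove Theorem~\ref{T:vb classification} by induction on the rank of the vector bundle, following the strategy familiar from the Dieudonn\'e--Manin classification and its incarnation in \cite{kedlaya-annals, fargues-fontaine}. Given a vector bundle $\calV$ on $X_{L,E}$ of rank $n$, the first step is to extract from it a subbundle of the form $\calO(d)$ with $d$ chosen as large as possible; more precisely, one considers the set of $d \in \QQ$ for which $\Hom(\calO(d), \calV) \neq 0$, i.e.\ for which $\calV(-d) := \calV \otimes \calO(-d)$ (interpreted via Lemma~\ref{L:twisting sections}(d) and the slope formalism of Definition~\ref{D:twisting bundle2}) has a nonzero global section. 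One must first check this set is nonempty and bounded above: nonemptiness follows because $\calV$ becomes globally generated after a sufficiently positive twist (using that $\calO(1)$ is ample, as noted after Lemma~\ref{L:vector bundle to phi-module}), and boundedness above follows from Lemma~\ref{L:twisting sections}(c) together with a degree computation (a map $\calO(d) \to \calV$ that is generically nonzero forces $d \leq \deg(\calV)/n$ or the like, since the Fargues--Fontaine curve has a well-behaved degree function). Let $d_1$ be the maximum (that it is attained, not merely a supremum, requires the discreteness of the relevant slope set, which holds because slopes of sub-line-bundles of a rank-$n$ bundle have bounded denominators).

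The second step is to show that the resulting injection $\iota: \calO(d_1) \hookrightarrow \calV$ is a subbundle inclusion, i.e.\ that the quotient $\calV/\calO(d_1)$ is again a vector bundle (torsion-free). Here is where maximality of $d_1$ is used: if the saturation $\calW$ of $\iota(\calO(d_1))$ inside $\calV$ were strictly larger, then $\calW$ would be a line bundle $\calO(d')$ with $d' \geq d_1$, and in fact one gets a nonzero map $\calO(d_1) \to \calO(d')$ which by Lemma~\ref{L:twisting sections}(c) forces $d' \leq d_1$, hence $d' = d_1$; then $\calO(d_1) \hookrightarrow \calO(d_1)$ nonzero is an isomorphism by Lemma~\ref{L:twisting sections}(a) applied after twisting (the Hom is $H^0(X_{L,E}, \calO(0)) = E$, and a nonzero scalar is invertible). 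So $\calO(d_1)$ is already saturated, and $\calV/\calO(d_1)$ is a vector bundle of rank $n-1$.

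The third step applies the inductive hypothesis to write $\calV/\calO(d_1) \cong \bigoplus_{i \geq 2} \calO(d_i)$, and then one must show the extension
\[
0 \to \calO(d_1) \to \calV \to \bigoplus_{i \geq 2} \calO(d_i) \to 0
\]
splits, or else account for it. The extension class lives in $\bigoplus_{i\geq 2} H^1(X_{L,E}, \calO(d_1 - d_i))$ (suitably interpreted via pushforward, so really $\mathrm{Ext}^1(\calO(d_i),\calO(d_1))$). When $d_1 \geq d_i$, maximality of $d_1$ forces in fact $d_1 \leq d_i$ for the summand appearing, but actually the cleaner route is: if $d_1 > d_i$ for some $i$, then one shows $H^1(X_{L,E}, \calO(d_1 - d_i)) = 0$ (positive-slope line bundles have vanishing $H^1$, by the standard cohomology computation for Fargues--Fontaine curves via $\varphi$-modules and Lemma~\ref{L:twisting sections}), so those extensions split off. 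If $d_1 = d_i$, the extension of $\calO(d_1)$ by $\calO(d_1)$ may be nonsplit a priori, but since $L$ is algebraically closed one invokes that every such extension of semistable slope-$d_1$ bundles is itself semistable of slope $d_1$, hence isomorphic to $\calO(d_1)^{\oplus}$ or a pushforward $\calO(d)$ with the same slope — again the algebraic closedness of $L$ (equivalently, of the relevant residue field, the point where the proof of Theorem~\ref{T:simple connectivity} in the general $K$ case genuinely differs, as flagged in the introduction) guarantees there are no ``new'' slope-$d_1$ objects beyond $\calO(d_1)$ itself. And if $d_1 < d_i$ for some $i$, that contradicts maximality of $d_1$ unless no section of $\calV(-d_i)$ comes from that summand — which needs a small argument showing the section realizing $d_i$ in the quotient lifts, again using $H^1$ vanishing of $\calO(d_i - d_1)$ in the other direction. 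Assembling these cases gives $\calV \cong \bigoplus_i \calO(d_i)$.

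The main obstacle I expect is the slope-$d_1$ self-extension case in step three, i.e.\ proving that a nonsplit extension of $\calO(d)$ by $\calO(d)$ does not yield a genuinely new indecomposable bundle but rather decomposes (or is absorbed into a pushforward $\calO(d')$ with the same slope $d$). This is precisely the place where the hypothesis that $L$ is algebraically closed is essential — it is what makes the category of semistable slope-$d$ bundles equivalent to a category of vector spaces over a division algebra, with no nontrivial extensions — and it is the deepest input, ultimately resting on the results of \cite{kedlaya-annals, kedlaya-revisited} cited in the introduction as the substance behind this theorem. A secondary subtlety is making the degree/slope bookkeeping rigorous for the non-noetherian ring $B^I_{L,E} \widehat{\otimes} \frako_F$ when $E$ has positive characteristic, but Corollary~\ref{C:units} and Lemma~\ref{L:curve over extension field} provide exactly the control over units and Newton polygons needed for that.
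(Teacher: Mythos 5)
Your proposal attempts to reprove a theorem that the paper itself does not prove: the paper's ``proof'' is a citation of the original sources (\cite[Theorem~4.16]{kedlaya-annals}, \cite[Theorem~4.5.7]{kedlaya-revisited} in characteristic $0$, \cite[Theorem~11.1]{hartl-pink} in characteristic $p$, and \cite[Th\'eor\`eme~8.2.10]{fargues-fontaine} for this formulation), and the introduction explicitly flags this classification as the deep external input. Measured against that, your outline is not a proof, for two concrete reasons. First, in your step three the case ``$d_1 < d_i$ for some $i$'' is handled by a lifting argument that uses the wrong cohomology group: lifting a map $\calO(d_i) \to \calV/\calO(d_1)$ to $\calV$ is obstructed in $\operatorname{Ext}^1(\calO(d_i), \calO(d_1)) \cong H^1(X_{L,E}, \calO(d_i)^\vee \otimes \calO(d_1))$, whose summands have \emph{negative} slope $d_1 - d_i$, and $H^1$ of negative-slope bundles on the Fargues--Fontaine curve does not vanish. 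Indeed it must not vanish: the nonsplit extension $0 \to \calO \to \calV \to \calO(1) \to 0$ classified by a nonzero class in $H^1(X_{L,E}, \calO(-1))$ has $\calV \cong \calO(1/2)$, so extensions of a higher-slope quotient by a lower-slope sub genuinely fail to split --- this is exactly how the intermediate-slope bundles $\calO(r/s)$ arise, and your case analysis cannot ``assemble'' past it. Second, your resolution of the $d_1 = d_i$ case invokes that the semistable slope-$d$ bundles form a semisimple category equivalent to vector spaces over a division algebra; but that statement is equivalent to the theorem for semistable bundles, i.e.\ it is the thing to be proven, so the argument is circular precisely at the point you yourself identify as the deepest. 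The actual content of the theorem (in any of the cited proofs) is the nonformal input you never supply --- e.g.\ existence of nonzero sections, or surjectivity of period maps, or the slope-filtration descent arguments of \cite{kedlaya-annals, kedlaya-revisited} --- and no amount of Harder--Narasimhan bookkeeping substitutes for it.

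There are also smaller slips worth noting: for non-integral $d_1$ the bundle $\calO(d_1)$ has rank $s_1 > 1$, so the saturation argument in your step two (``$\calW$ would be a line bundle $\calO(d')$'') is not correct as stated, and $\Hom(\calO(d_1), \calO(d_1))$ is a division algebra over $E$ rather than $E$ itself, so the appeal to Lemma~\ref{L:twisting sections}(a) does not literally apply; likewise the existence and attainment of the maximal slope $d_1$ requires a boundedness argument you only gesture at. These are repairable, but the two gaps in the previous paragraph are not repairable within your outline: they are where the theorem lives.
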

\begin{proof}
Modulo the interpretation of vector bundles given in Lemma~\ref{L:vector bundle to phi-module}, this result
is originally due to the author when $E$ is of characteristic $0$
(see \cite[Theorem~4.16]{kedlaya-annals} in the case $L = \CC_p^\flat$ and \cite[Theorem~4.5.7]{kedlaya-revisited} in the general case)
and to Hartl--Pink when $E$ is of characteristic $p$ \cite[Theorem~11.1]{hartl-pink}.
The formulation given here is due to Fargues--Fontaine \cite[Th\'eor\`eme 8.2.10]{fargues-fontaine},
who give an independent proof.
See \cite[Theorem~3.6.13]{kedlaya-aws} for further discussion.
\end{proof}

We now use Theorem~\ref{T:vb classification} to establish a base case of simple connectivity,
following Weinstein \cite{weinstein-gqp}, Fargues--Fontaine \cite{fargues-fontaine}, and Scholze \cite{s-berkeley}. See also \cite[Lemma~4.3.10]{kedlaya-aws}.

\begin{lemma} \label{L:splitting cover discrete}
Suppose that $F$ is a completed algebraic closure of $E$.
Then every finite \'etale cover of $X_{L,E} \times_E F$ splits.
\end{lemma}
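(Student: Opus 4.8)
The plan is to reduce the statement to the classification of vector bundles (Theorem~\ref{T:vb classification}) together with the fact that $F$ is a completed algebraic closure of $E$. First I would note that a finite \'etale cover $Z \to X_{L,E} \times_E F$ has structure sheaf $\pi_* \calO_Z$, which is a coherent locally free sheaf, i.e.\ a vector bundle on $X_{L,E} \times_E F$, equipped with an $\calO$-algebra structure. The subtlety is that Theorem~\ref{T:vb classification} as stated applies to $X_{L,E}$, not to its base extension; so the first real step is to pass from $F$ down to $E$. Here I would invoke the key structural input from Remark~\ref{R:FF connected} and Definition~\ref{D:twisting bundle2}: since $F$ is algebraically closed and contains $E$, and $\overline{\FF}_q \subseteq F$, the cover $\pi_s \colon X_{L,E,s} \to X_{L,E}$ splits after base change to $F$. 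More to the point, $X_{L,E} \times_E F$ is obtained from $X_{L,E}$ by an extension with Galois-type symmetry, and one can hope that every vector bundle on $X_{L,E} \times_E F$ still splits as a sum of the pullbacks of the $\calO(d_i)$ — or at least that the relevant cohomological vanishing (Lemma~\ref{L:twisting sections}) survives base change. So I would first establish: \emph{every vector bundle on $X_{L,E} \times_E F$ is a direct sum of $\calO(d_i) \times_E F$'s}, by combining Theorem~\ref{T:vb classification} over each finite subextension of $E$ in $F$ with a limiting/descent argument, using that $\calO(Y_{L,E} \times_E F)$ is a filtered colimit (completed) of the rings attached to finite subextensions.

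Granting that, the heart of the argument is purely a slope computation. Let $\mathcal{B} = \pi_* \calO_Z$; by the above it splits as $\bigoplus_i \calO(d_i) \times_E F$. The unit section $\calO \to \mathcal{B}$ is a nonzero map, so by Lemma~\ref{L:twisting sections}(a),(c) (base-changed to $F$, which does not affect the relevant Hom-vanishing since it concerns global sections of $\calO(-d)$ for $d>0$), the summand $\calO(0)$ must occur, and no summand $\calO(d_i)$ with $d_i < 0$ can occur. On the other hand, the dual trace pairing, or the fact that a finite \'etale cover has degree-$0$ structure sheaf, forces $\sum_i d_i = 0$: indeed $\det \mathcal{B} = \calO$ because the cover is \'etale, and $\det(\bigoplus_i \calO(d_i))$ has slope $\sum d_i$, which must vanish. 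Combining $d_i \geq 0$ for all $i$ with $\sum_i d_i = 0$ gives $d_i = 0$ for all $i$, so $\mathcal{B} \cong \calO^{\oplus r}$ as a vector bundle. Then $\mathcal{B}$ is a finite \'etale $\calO$-algebra whose underlying sheaf is trivial, i.e.\ $H^0(\mathcal{B})$ is a finite \'etale $H^0(\calO)$-algebra; but $H^0(\calO_{X_{L,E} \times_E F}) = F$ is algebraically closed (since $F$ is, and $X_{L,E} \times_E F$ is connected by Remark~\ref{R:FF connected}), so $H^0(\mathcal{B}) = F^r$ and $\mathcal{B} = \calO^{\oplus r}$ as an algebra, meaning $Z$ is a disjoint union of copies of $X_{L,E} \times_E F$. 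Hence the cover splits.

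I expect the main obstacle to be the first step: promoting Theorem~\ref{T:vb classification} from $X_{L,E}$ to $X_{L,E} \times_E F$. Over $X_{L,E}$ the classification relies (as the introduction emphasizes) on discreteness of the valuation on finite extensions of $E$, which is lost over $F$; so one cannot simply reapply the theorem. The safe route is the descent route: a vector bundle on $X_{L,E} \times_E F$ is, via Lemma~\ref{L:vector bundle to phi-module}, a $\varphi$-module over $\calO(Y_{L,E} \times_E F)$; since a finite projective module over a Fr\'echet algebra that is a completed colimit descends to a finite level, and each finite level is (a base extension along a \emph{finite} extension of $E$, hence handled either by applying Theorem~\ref{T:vb classification} to the curve for that larger $E'$, noting the curve construction is functorial in $E$, or by Galois descent from $X_{L,E'}$), the bundle descends to $X_{L,E'}$ for some finite $E' \subseteq F$, where Theorem~\ref{T:vb classification} applies directly. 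The one point needing care is that $\overline{\FF}_q$ may be infinite over $\FF_q$, so one may first need to enlarge $\FF_q$ inside $\kappa_L$; but this only rescales the relevant slopes by a bounded denominator and does not affect the conclusion $d_i = 0$. Everything else is the slope bookkeeping above, which is routine given Lemma~\ref{L:twisting sections}.
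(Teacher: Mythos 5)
Your overall plan (view $\pi_*\calO_Z$ as a vector bundle, classify it, and do slope bookkeeping) matches the paper's, but the slope bookkeeping has a genuine gap. From the nonvanishing of the unit section $\calO \to \mathcal{B}$ you conclude that ``the summand $\calO(0)$ must occur, and no summand $\calO(d_i)$ with $d_i<0$ can occur.'' Lemma~\ref{L:twisting sections}(c) only says that the unit cannot map nontrivially \emph{into} a negative-slope summand; it does not prevent such a summand from existing. In fact the only constraints you impose --- a nonzero map from $\calO$ into $\mathcal{B}$, and the self-duality coming from the trace pairing (hence trivial determinant and total degree $0$) --- are all satisfied by $\calO \oplus \calO(1) \oplus \calO(-1)$, which is not trivial. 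So unit plus trace duality alone cannot force $d_i=0$ for all $i$: you must use the multiplication on $\mathcal{B}$. This is exactly how the paper closes the argument: take a nonzero global section of a summand $\calO(d_i)$ with $d_i=\max_j d_j>0$ (nonzero by Lemma~\ref{L:twisting sections}(b)); its square lands in the image of $\calO(d_i)^{\otimes 2}$, a sum of copies of $\calO(2d_i)$, and $\Hom(\calO(2d_i),\calO(d_j))=0$ since $2d_i>d_j$, so the section is a nonzero nilpotent function on a reduced space, a contradiction. This gives $d_i\le 0$ for all $i$, and only then does self-duality give $d_i\ge 0$. Some version of this multiplicative (square-zero) argument is the missing idea in your write-up.

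A secondary point: your first step is heavier than necessary and itself needs justification. You propose to descend an arbitrary vector bundle on $X_{L,E}\times_E F$ to some finite level $X_{L,E'}$ (descent of finite projective modules over a completed filtered colimit, plus gluing data) and then compare the twisting sheaves for $E$ and $E'$. The paper avoids all of this by descending the \emph{cover} rather than the bundle: $\FEt(X_{L,E}\times_E F)$ is the filtered $2$-colimit of the $\FEt(X_{L,E}\times_E E')$, so the cover comes from some $U_0 \to X_{L,E'}$, and one pushes forward along the composite $U_0 \to X_{L,E'} \to X_{L,E}$, applying Theorem~\ref{T:vb classification} only over $X_{L,E}$ itself. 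This also lets the proof finish with $H^0(X_{L,E},\calO)=E$ (a finite \'etale $E$-algebra splits over the algebraically closed $F$), rather than invoking $H^0(X_{L,E}\times_E F,\calO)=F$, which the paper only establishes later (Lemma~\ref{L:global sections}); your use of the latter is not circular, but it does import an extra Schauder-basis argument that the paper's route does not need.
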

\begin{proof}
In light of the equivalence
\[
\FEt(X_{L,E} \times_E F) \cong 2\mbox{-}\varinjlim_{E'} \FEt(X_{L,E} \times_E E')
\]
for $E'$ varying over finite extensions of $E$ within $F$
(e.g., apply \cite[Proposition~2.6.8]{kedlaya-liu1} to each term in a finite covering of $X_{L,E}$ by affinoid 
subspaces), we may start with a cover
$f: U \to X_{L,E} \times_E F$ which is the base extension of the cover
$f_0: U_0 \to X_{L,E} \times_E E' = X_{L,E'}$ for some finite extension $E'$ of $F$.
Let $g_0: U_0 \to X_{L,E} \times_E E' \to X_{L,E}$ be the composite projection.
Apply Theorem~\ref{T:vb classification} to split $g_{0 *} \calO_{U_0}$ as a direct sum $\bigoplus_i \calO(d_i)$
for some $d_i \in \QQ$. 

We claim that in fact $d_i = 0$ for all $i$. To see this, suppose by way of contradiction that $d_i > 0$ for some
$i$; since there are only finitely many such $i$, we may assume without loss of generality that $d_i = \max_j \{d_j\}$. By Lemma~\ref{L:twisting sections}, $H^0(X_{L,E}, \calO(d_i))$ is nonzero, and any nonzero element gives rise to a nonzero square-zero
element of $H^0(U_0, \calO_{U_0})$; however, this yields a contradiction because $X_{L,E}$ is reduced, as then must be $U_0$. Hence $d_i \leq 0$ for all $i$; since $g_{0 *} \calO_{U_0}$ is self-dual, we also have $d_i \geq 0$ for all $i$. 

Consequently, $d_i = 0$ for all $i$, and so $H^0(U_0, \calO_{U_0}) = H^0(X_{L,E}, g_{0 *} \calO_{U_0})$
is a finite-dimensional $E$-vector space. This vector space inherits from $U_0$ the structure of an \'etale $E$-algebra; it follows that the original cover $f$ splits.
\end{proof}

\begin{remark} \label{R:symmetry applied}
One may already deduce from Lemma~\ref{L:splitting cover discrete} that for $L = \CC_p^\flat$, $X_{L,E}$ is geometrically simply connected.
To do this, one must use the symmetry between $L$ and $F$ coming from
Remark~\ref{R:diamond interpretation};
see Remark~\ref{R:symmetry} for further discussion.
\end{remark}

\begin{remark} \label{R:Banach on sections}
For any vector bundle $\calE$ on $X_{L,E} \times_E F$, we may pull back to 
$B^I_{L,E} \widehat{\otimes}_E F$ for some interval $I$ to equip $H^0(X_{L,E} \times_E F, \calE)$
with the structure of a Banach module over $F$. This structure does not depend on the choice of $I$,
as follows from the log-convexity of $\lambda_t(x)$ as a function of $t$
(Definition~\ref{D:norms}). More precisely, it is clear that replacing $I$ with $p^n I$ for any $n \in \ZZ$
does not change the Banach module structure; then for any other interval $J \subset (0, \infty)$,
we can find integers $n_1, n_2$ such that $J$ is contained in the convex hull of
$p^{n_1} I \cup p^{n_2} I$.
\end{remark}

\begin{lemma} \label{L:global sections}
The natural map $F \to H^0(X_{L,E} \times_E F, \calO)$ is an isomorphism.
\end{lemma}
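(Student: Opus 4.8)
The plan is to compute the global sections of the structure sheaf on $X_{L,E} \times_E F$ directly in terms of $\varphi_L$-invariants, reducing the statement to a concrete claim about the graded rings introduced in Lemma~\ref{L:curve over extension field}. By Remark~\ref{R:FF connected}, $X_{L,E} \times_E F$ is connected, so $H^0(X_{L,E} \times_E F, \calO)$ is a field (in fact a finite extension of $F$, once we know it is finite-dimensional, by the argument at the end of Lemma~\ref{L:splitting cover discrete} — though here we will get dimension $1$ outright). First I would identify $H^0(X_{L,E} \times_E F, \calO)$ with the $\varphi_L$-invariants of $H^0(Y_{L,E} \times_E F, \calO)$, and then, since $Y_{L,E} \times_E F$ is covered by the affinoids $\Spa(B^I_{L,E} \widehat{\otimes}_{W(\FF_q)_E} \frako_F, \cdots)$ for $I$ ranging over closed subintervals of $(0,\infty)$, identify this ring of global functions with $\varprojlim_I B^I_{L,E} \widehat{\otimes}_{W(\FF_q)_E} \frako_F$, i.e. with the analogue of the extended Robba ring $\widetilde{\Pi}_{L,E}\widehat\otimes_E F$. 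Passing to connected components via Remark~\ref{R:FF connected} splits this as a product indexed by $G_{\FF_q}$, with $\varphi_L^*$ acting through the dense copy of $\ZZ$; so the $\varphi_L$-invariants are exactly the $\varphi_L$-invariant elements in a single factor $\varprojlim_I B^I_{L,E} \widehat{\otimes}_{W(\overline{\FF}_q)_E} \frako_F$.

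So the crux is to show: if $x$ lies in $B^I_{L,E} \widehat{\otimes}_{W(\overline{\FF}_q)_E} \frako_F$ for all $I \subseteq (0,\infty)$ and satisfies $\varphi_L^* x = x$, then $x \in F$. The $\varphi_L$-invariance means $\lambda_{qt}(x) = \lambda_t(x)$ for all $t$, so by Lemma~\ref{L:curve over extension field}(b) the convex continuous function $t \mapsto \log \lambda_t(x)$ is invariant under $t \mapsto qt$; a convex function on $(0,\infty)$ invariant under a nontrivial dilation must be constant (its slope satisfies $s'(qt) \cdot q = s'(t)$ and is monotonic, forcing $s' \equiv 0$ — or directly, a bounded-below convex function invariant under scaling is constant). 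Hence $t \mapsto \log\lambda_t(x)$ is constant, in particular affine, so by Corollary~\ref{C:units} $x$ is a unit in $B^J_{L,E}\widehat\otimes_{W(\overline\FF_q)_E}\frako_F$ for every closed subinterval $J$. Now I would pass to the associated graded: the image of $x$ in $\Gr(B^{[t,t]}_{L,E}\widehat\otimes_{W(\overline\FF_q)_E}\frako_F) \cong (\Gr L)[\overline\varpi^{\Gamma_L}]\otimes_{\overline\FF_q}\kappa_F$ must, by the unit analysis in Corollary~\ref{C:units}, be a unit times a power of $\overline\varpi$; but $\varphi_L$-invariance of the norm pins down that power to be $\overline\varpi^0$, and $\varphi_L$-equivariance forces the residual symbol to be $\varphi_L$-invariant in $\Gr L \otimes_{\overline\FF_q}\kappa_F$. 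The $\varphi_L$-invariants of $\Gr L$ are just $\overline\FF_q$ (this uses that $L$ is algebraically closed of characteristic $p$: the only $\varphi_L$-fixed elements of $\kappa_L$ are in $\overline{\FF}_q$, and nonzero elements in positive degree get their degrees scaled by $\varphi_L$), so after subtracting a suitable element of $F$ we reduce $x$ to an element of strictly smaller norm, and iterate; since $x$ is a unit this forces $x$ itself, after one correction by an element of $F$, to have strictly smaller norm, and bootstrapping (the classical successive-approximation argument for Robba-type rings) gives $x \in F$.

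I would organize this as: (1) reduce to the invariant-ring statement on a single connected component; (2) reduce via the dilation-invariant convexity argument to the case where $t\mapsto\log\lambda_t(x)$ is constant; (3) run the graded-ring successive approximation to conclude $x\in F$. The main obstacle is step (3): making the successive approximation rigorous across all intervals $I$ simultaneously — that is, checking that the correction terms one subtracts (elements of $F$, chosen from the degree-zero part of the graded ring) converge in the Fr\'echet topology on $\varprojlim_I B^I_{L,E}\widehat\otimes_{W(\overline\FF_q)_E}\frako_F$, and that the "strictly smaller norm at every $t$" is uniform enough to iterate. This is exactly the kind of argument appearing in \cite{kedlaya-noetherian} and \cite{kedlaya-liu1} for invariants of Robba rings; an alternative cleaner route, which I would fall back on if the direct computation gets unwieldy, is to invoke Lemma~\ref{L:vector bundle to phi-module} to present $H^0(X_{L,E}\times_E F,\calO)$ as $H^0$ of the trivial $\varphi$-module over $\calO(Y_{L,E}\times_E F)$ and cite the existing structure theory for the extended Robba ring, or to reduce to the case $F = \CC_p$-type coefficient field where $\calO$ is already known to have the expected sections and then spread out, but the self-contained graded-ring argument above is the one I would present.
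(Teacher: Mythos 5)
Your strategy---identify $H^0(X_{L,E}\times_E F,\calO)$ with $\varphi_L$-invariants of $\calO(Y_{L,E}\times_E F)$ and run a graded-ring successive approximation---is quite different from the paper's proof, which is much shorter: one formally reduces to the case that $F$ is the completion of a subfield of countable dimension over $E$, chooses a Schauder basis of $F$ over $E$ \cite[Proposition~2.7.2/3]{bgr}, and thereby reduces the claim to the known assertion $H^0(X_{L,E},\calO)=E$ (Lemma~\ref{L:twisting sections}(a)). Your steps (1) and (2) are essentially sound (the dilation-invariance-plus-convexity argument forcing $t\mapsto\log\lambda_t(x)$ to be constant is correct), but step (3), which you yourself flag as the main obstacle, is a genuine gap, and it sits exactly where the difficulty of the non-discretely-valued setting lives. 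The correction terms $a_i\in F$ have strictly decreasing norms $c_i$, but $\left|F^\times\right|$ is dense in $\RR_{>0}$, so nothing forces $c_i\to 0$; if $c_i\to c_\infty>0$ the series $\sum_i a_i$ diverges and the iteration yields nothing. The natural repair---enlarge $F$ (and $L$) to be spherically complete so that the nested balls around the partial sums have a common center and the approximation can be continued transfinitely---requires knowing that formation of $H^0(X_{L,E}\times_E F,\calO)$ commutes with the extension $F\hookrightarrow F'$ so that the statement descends back to $F$; but that base-change statement is itself proved by the completed-tensor/Schauder-basis argument that constitutes the paper's entire proof (compare the proof of Lemma~\ref{L:split on overfield}). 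So as written the proposal does not close, and the missing convergence input is the actual mathematical content; your fallback of ``citing the existing structure theory for the extended Robba ring'' is not a proof in the setting at hand, precisely because $F$ is not discretely valued.

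Two smaller corrections. First, $\varphi_L$-invariance does not pin the power of $\overline\varpi$ in the leading symbol to zero; what the constancy of $t\mapsto\lambda_t(x)$ forces is that the $\Gr L$-component of the symbol lies in the norm-one graded piece, so the symbol is a $\varpi$-power (possibly fractional, coming from $F$) times an element of $\kappa_L\otimes_{\overline\FF_q}\kappa_F$, whose $\varphi_L$-invariance then places it in $\kappa_F$; this is harmless since the correction term still lies in $F$, but the statement as you wrote it is off. Second, by Remark~\ref{R:FF connected} the automorphism $\varphi_L$ permutes the connected components of $Y_{L,E}\times_E F$ through the dense copy of $\ZZ\subset\widehat{\ZZ}$, so no single component is $\varphi_L$-stable; ``the $\varphi_L$-invariant elements in a single factor'' does not literally parse, and the correct formulation is in terms of $\varphi_L$-equivariant continuous families of sections indexed by $\widehat{\ZZ}$. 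This is repairable, but it needs to be said carefully before the crux computation even starts.
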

\begin{proof}
To check the claim at hand, we may formally reduce to the case where $F$ is the completion of a subfield of countable dimension over $E$. We may then construct a Schauder basis for $F$ over $E$ 
\cite[Proposition~2.7.2/3]{bgr} to reduce to
the assertion that $E \to H^0(X_{L,E}, \calO)$ is an isomorphism, which is
 Lemma~\ref{L:twisting sections}(a).
\end{proof}

\begin{lemma} \label{L:split on overfield}
Let $L'$ be an algebraically closed complete overfield of $L$.
Let $F$ be a complete algebraically closed overfield of $E$ and let $F'$ be a complete algebraically closed overfield of $F$. Let $f: U \to X_{L,E} \times_E F$ be a finite \'etale cover whose base extension to
$X_{L',E} \times_E F'$ splits completely. Then $f$ splits completely.
\end{lemma}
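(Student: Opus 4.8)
The plan is to descend the splitting of $f$ from the larger field $F'$ (and larger coefficient field $L'$) back down to $F$ by a Galois-theoretic/$\pi_1$ argument, using the fact that finite \'etale covers of a fixed cover are detected by the endomorphism algebra of a pushforward, and that this algebra is insensitive to the relevant base extensions.

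Concretely, a finite \'etale cover $f \colon U \to X_{L,E} \times_E F$ splits completely if and only if the finite $F$-algebra $H^0(U, \calO_U)$ — which by Lemma~\ref{L:global sections} (applied after the fact that $f_* \calO_U$ makes sense as a vector bundle, cf.\ Remark~\ref{R:Banach on sections}) is a finite-dimensional \'etale $F$-algebra — is isomorphic to a product of copies of $F$; equivalently, $\dim_F H^0(U, \calO_U) = \deg f$, i.e.\ the pushforward $f_* \calO_U$ is a trivial vector bundle of rank $\deg f$. So the first step is to reformulate ``$f$ splits completely'' as ``$f_* \calO_U \cong \calO^{\oplus \deg f}$ on $X_{L,E} \times_E F$,'' and similarly for the base extension to $X_{L',E} \times_E F'$.

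Next I would compare global sections across the base changes. One passes from $(L,F)$ to $(L',F)$ and then from $(L',F)$ to $(L',F')$ in two steps. For the field $F \to F'$ step, Remark~\ref{R:Banach on sections} equips $H^0$ with a Banach $F$-module structure, and base extension of the Fargues--Fontaine curve along $F \to F'$ induces $H^0(X_{L',E}\times_E F', (f')_*\calO) \cong H^0(X_{L',E}\times_E F, f_*\calO) \widehat{\otimes}_F F'$; since the right-hand factor is faithfully flat and $H^0$ of a vector bundle here is finitely generated (hence the Banach structure is just the finite-dimensional one), triviality downstairs forces triviality upstairs — or rather, the converse: splitting over $F'$ forces $\dim_F H^0 = \deg f$ already over $F$. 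For the coefficient field $L \to L'$ step, one uses that $X_{L,E}$ and $X_{L',E}$ have compatible coverings by $\Spa(B^I_{L,E},\ldots)$ and that $B^I_{L,E} \widehat{\otimes}$-ing along the relevant completed tensor product commutes with forming $H^0$ of a $\varphi$-module (Lemma~\ref{L:vector bundle to phi-module}); the key point is faithful flatness of $L \to L'$ at the level of the Robba-type rings, so that the rank of $f_*\calO_U$ as a trivial bundle cannot drop. Combining the two steps: if $f_* \calO_U$ becomes trivial of full rank over $(L',F')$, then it was already trivial of full rank over $(L,F)$, so $f$ splits completely.

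The main obstacle I expect is bookkeeping the Banach/Fr\'echet functional analysis needed to ensure that ``$H^0$ commutes with completed base extension'' for the relevant vector bundles — one must know $H^0$ is finitely generated (so that no completion subtleties arise) and that the base-change map is an isomorphism, not merely injective or dense-image. This is exactly the kind of input provided by the quasi-Stein structure of $Y_{L,E}\times_E U$ and the results of \cite{kedlaya-liu2} cited in Lemma~\ref{L:vector bundle to phi-module}, together with the reduction-to-countable-dimension trick and Schauder bases used in Lemma~\ref{L:global sections}; so the proof should be a matter of transplanting that argument from $\calO$ to an arbitrary vector bundle $\calE = f_*\calO_U$, noting that $H^0(X_{L,E}\times_E F, \calE)$ being an \'etale $F$-algebra of the expected dimension is a closed condition that descends along faithfully flat $F \to F'$ and $L \to L'$.
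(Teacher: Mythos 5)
Your overall route is the paper's: put the Banach $F$-algebra structure of Remark~\ref{R:Banach on sections} on $H^0(U,\calO)$, show $H^0$ commutes with the completed base extension by a Schauder-basis argument, use Lemma~\ref{L:global sections} upstairs to identify $H^0(U',\calO)$ with a finite product of copies of $F'$, and then split $f$ using that $F$ is algebraically closed. The genuine problem is the finiteness claim you insert at both ends: you assert at the outset that $H^0(U,\calO_U)$ is a finite-dimensional \'etale $F$-algebra ``by Lemma~\ref{L:global sections}'', and later that ``one must know $H^0$ is finitely generated'' in order to have the base-change isomorphism. Neither is available a priori. Lemma~\ref{L:global sections} computes $H^0$ of the structure sheaf only; for a general vector bundle on $X_{L,E} \times_E F$ the global sections form an infinite-dimensional Banach space (already $\calO(1)$ does this), and over the non-discretely-valued field $F$ there is no classification theorem to rule positive-slope summands out of $f_*\calO_U$ --- that is exactly the difficulty this lemma exists to circumvent. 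Indeed, if finite \'etaleness of $H^0(U,\calO_U)$ over $F$ were known in advance, the lemma would follow at once from algebraic closedness of $F$, with no use of the hypothesis over $F'$ at all.

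The repair is to run the steps in the paper's order: the Schauder-basis argument (after reducing to a subfield of countable dimension) gives the isomorphism of Banach algebras $H^0(U,\calO) \widehat{\otimes}_F F' \to H^0(U',\calO)$ with no finiteness hypothesis; note also that ``faithful flatness'' is not the operative tool in the completed setting --- what one actually uses is the injection $H^0(U,\calO) \hookrightarrow H^0(U,\calO) \widehat{\otimes}_F F'$. Only then does the complete splitting of $f'$, together with Lemma~\ref{L:global sections} applied over $(L',F')$, identify the target with $(F')^{\deg f}$, so that $H^0(U,\calO)$ is a posteriori a finite-dimensional reduced $F$-algebra, hence a product of copies of $F$, whose idempotents split $f$ completely. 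Your two-step treatment of the base change is acceptable, but the $L \to L'$ enlargement must likewise be absorbed into the Banach/Schauder base-change statement (as in the paper's single isomorphism), not justified by finiteness or flatness of the Robba-type rings.
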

\begin{proof}
Let $f': U' \to X_{L',E} \times_E F'$ be the base extension of $f$.
Using Remark~\ref{R:Banach on sections}, we may equip $H^0(U, \calO) = H^0(X_{L,E} \times_E F, f_* \calO_U)$
with the structure of a Banach algebra over $F$. Using a Schauder basis argument again,
we see that the natural map $H^0(U, \calO) \widehat{\otimes}_F F' \to H^0(U', \calO)$ is an isomorphism
of Banach algebras. Since $f'$ splits completely, by Lemma~\ref{L:global sections}
the ring $H^0(U', \calO)$ is a finite direct sum of copies of $F$.
Consequently,
$H^0(U, \calO)$ is a finite-dimensional reduced $F$-algebra, and hence must itself split completely because $F$ is algebraically closed. This splitting induces the desired splitting of $f$.
\end{proof}

\begin{remark}
Although we have chosen to establish Lemma~\ref{L:splitting cover discrete} separately for historical reasons,
it is not actually logically necessary to do so; the reduction method used to prove
Theorem~\ref{T:simple connectivity2} can further be used to reduce Lemma~\ref{L:splitting cover discrete}
to Remark~\ref{R:special case}. We omit the details here.
\end{remark}

\section{Abelian covers}
\label{sec:abelian}

We next give a direct argument to split $\ZZ/p\ZZ$-covers, picking up the thread from Lemma~\ref{L:drinfeld schemes pro-p}.

\begin{lemma} \label{L:product norm}
Suppose that $E$ is of characteristic $p$ and that $L \cong \kappa_L((u^{\Gamma_L})), F \cong \kappa_F ((t^{\Gamma_F}))$. Assume further that $\Gamma_L, \Gamma_F$ are subgroups of $\RR$ and that the norms on $L,F$ are normalized so that
\[
\left| u^i \right| = p^{-i}, \qquad \left| t^j \right| = p^{-j} \qquad (i \in \Gamma_L, j \in \Gamma_F).
\]
Put $k := \kappa_L \otimes_{\FF_q} \kappa_F$ and let $k^{\Gamma_L \times \Gamma_K}$ be the set
of functions $\Gamma_L \times \Gamma_F \to k$, with elements written as $k$-valued formal sums over $\Gamma_L \times \Gamma_F$.
Consider the map $L \otimes_{\FF_q} F \to k^{\Gamma_L \times \Gamma_K}$ induced by the bilinear map
\[
L \times F \to k^{\Gamma_L \times \Gamma_K}, \quad
\left( \sum_{i \in \Gamma_L} x_i u^i, \sum_{j \in \Gamma_F} y_j t^j \right) \mapsto
\sum_{(i,j) \in \Gamma_L \times \Gamma_F} (x_i \otimes y_j) u^i t^j.
\]
Then for any $r>0$, the restriction to $L \otimes_{\FF_q} F$ of the function $\lambda_r$ on $k^{\Gamma_L \times \Gamma_K}$ given by
\[
\lambda_r \left( \sum_{(i,j) \in \Gamma_L \times \Gamma_F} x_{i,j} u^i t^j \right) = \sup\{p^{-ri-j}: x_{i,j} \neq 0\}
\]
computes the tensor product of the given norm on $F$ and the $r$-th power of the given norm on $L$.
\end{lemma}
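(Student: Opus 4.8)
The plan is to show that $\lambda_r|_{L\otimes_{\FF_q}F}$ coincides with the non-archimedean tensor product norm $\gamma$ formed from the given norm on $F$ and the $r$-th power of the given norm on $L$ (with $\FF_q$ carrying the trivial norm), by proving $\lambda_r\le\gamma$ from a universal property and $\gamma\le\lambda_r$ by an explicit orthogonalization. First I would record the formal properties of $\lambda_r$. The map $L\otimes_{\FF_q}F\to k^{\Gamma_L\times\Gamma_F}$ is injective, since $L$ (resp.\ $F$) embeds into the coefficient-sequence space $\kappa_L^{\Gamma_L}$ (resp.\ $\kappa_F^{\Gamma_F}$), all the $\FF_q$-modules in sight are free hence flat, and $(\prod_I\kappa_L)\otimes_{\FF_q}M\hookrightarrow\prod_I(\kappa_L\otimes_{\FF_q}M)$ when $M$ is free; thus $\lambda_r$ restricts to a genuine norm on $L\otimes_{\FF_q}F$, manifestly ultrametric and $\FF_q$-homogeneous. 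For $a=\sum_i x_iu^i\in L$ and $b=\sum_j y_jt^j\in F$, the image of $a\otimes b$ has coefficient $x_i\otimes y_j$ in bidegree $(i,j)$, and this is nonzero precisely when $x_i\neq0\neq y_j$ because $\kappa_L,\kappa_F$ are free over $\FF_q$; minimizing $ri+j$ over such pairs gives $\lambda_r(a\otimes b)=|a|_L^r|b|_F$. Since the non-archimedean tensor product seminorm is the largest ultrametric seminorm bounded by $|a|_L^r|b|_F$ on simple tensors, this already yields $\lambda_r\le\gamma$.

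For the reverse inequality, fix $x\in L\otimes_{\FF_q}F$ and write $x=\sum_{l=1}^n a_l\otimes b_l$. Let $V\subseteq L$ be the $\FF_q$-span of $a_1,\dots,a_n$; being a finite set, $V$ admits an orthogonal basis $e_1,\dots,e_s$ (take lifts of bases of the successive quotients $\{v:|v|\le\rho\}/\{v:|v|<\rho\}$), and orthogonality forces the leading coefficients in $\kappa_L$ of those $e_\sigma$ having a common valuation to be $\FF_q$-linearly independent. Regrouping, $x=\sum_\sigma e_\sigma\otimes b'_\sigma$ with $b'_\sigma\in F$; after discarding the terms with $b'_\sigma=0$, it suffices to prove $\lambda_r(x)=\max_\sigma|e_\sigma|_L^r|b'_\sigma|_F$, since this representation then bounds $\gamma(x)$ by $\max_\sigma|e_\sigma|_L^r|b'_\sigma|_F=\lambda_r(x)$, and combined with the previous paragraph this gives $\gamma=\lambda_r$. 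The inequality $\le$ is the first paragraph applied termwise. For $\ge$, let $v(a)$ and $w(b)$ denote the smallest elements of the supports of $a\in L$ and $b\in F$, pick $\sigma^*$ minimizing $r\,v(e_\sigma)+w(b'_\sigma)$, and inspect the coefficient of the image of $x$ in bidegree $(v(e_{\sigma^*}),w(b'_{\sigma^*}))$: a short support argument shows the only $\sigma$ contributing are those with $v(e_\sigma)=v(e_{\sigma^*})$ and $w(b'_\sigma)=w(b'_{\sigma^*})$, each contributing $\bar e_\sigma\otimes\bar b'_\sigma$ with both leading coefficients nonzero. Since the $\bar e_\sigma$ involved are $\FF_q$-linearly independent in $\kappa_L$, freeness of $\kappa_F$ over $\FF_q$ makes this sum nonzero, so $\lambda_r(x)\ge p^{-(r\,v(e_{\sigma^*})+w(b'_{\sigma^*}))}=\max_\sigma|e_\sigma|_L^r|b'_\sigma|_F$, as needed.

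The hard part is the final computation, and specifically the two places where the finer structure enters: that passing to an orthogonal basis of $V$ does not collapse the norm, and that in the selected leading bidegree no cancellation occurs. Both rely on orthogonality of the $e_\sigma$ (forcing $\FF_q$-linear independence of leading coefficients among basis vectors of equal valuation) together with the freeness of $\kappa_L$ and $\kappa_F$ over $\FF_q$. It is worth noting that neither integrality of $k=\kappa_L\otimes_{\FF_q}\kappa_F$ nor any multiplicativity of $\lambda_r$ is used, which is why here one works over $\FF_q$ directly rather than passing to $\overline{\FF}_q$ as in Lemma~\ref{L:curve over extension field}.
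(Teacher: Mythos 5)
Your proof is correct and follows essentially the same route as the paper: both get $\lambda_r \le$ (tensor product norm) formally from the value on simple tensors, and get the reverse inequality by exhibiting a presentation of norm exactly $\lambda_r(x)$ built from a normalized (i.e.\ orthogonal) $\FF_q$-basis, with optimality resting on the $\FF_q$-linear independence of leading coefficients in the residue field. The only difference is cosmetic: the paper normalizes bases on both factors and rewrites the presentation with $\FF_q$-coefficients, while you orthogonalize only on the $L$-side and then spell out explicitly the leading-bidegree no-cancellation argument that the paper leaves implicit.
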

\begin{proof}
We start by fixing some terminology in order to articulate the argument. By a \emph{presentation} of an element $z \in L \otimes_{\FF_q} F$, we mean an expression of $z$ as a finite sum $\sum_l x_l\otimes y_l$ of simple tensors in $L \otimes_{\FF_q} F$. For a presentation of the form $\sum_l x_l \otimes y_l$, define the \emph{norm} of the presentation as $\max_l \{\left| x_l \right|^r \left| y_l \right|\}$; by definition, the tensor product seminorm of an element of $L \otimes_{\FF_q} F$ is the infimum of the norms of all presentations of $z$.
For $z$ admitting a presentation as a simple tensor $x \otimes y$, the norm of such a presentation equals $\lambda_r(z)$; it follows formally that $\lambda_r$ is a lower bound for the tensor product norm.
To complete the argument, we will show that every $z \in L \otimes_{\FF_q} F$ admits a presentation with norm $\lambda_r(z)$; as a bonus, that shows that in this situation, the infimum in the definition of the tensor product norm is always achieved (which need not hold in a more general setting).

Let $\sum_l x_l \otimes y_l$ be a presentation of $z$.
Let $L_1$ (resp.\ $F_1$) be the $\FF_q$-vector subspace of $L$ (resp.\ $F$) spanned by the $x_i$ (resp.\ the $y_i$).
Choose a basis $e_1,e_2,\dots$ of $L_1$ which is \emph{normalized} in the following sense:
\begin{itemize}
\item
the valuations of $e_1,e_2,\dots$ form a nondecreasing sequence; and
\item
if $e_{i_1},\dots,e_{i_2}$ have the same valuation, then their images in the graded ring of $L$ are linearly independent over $\FF_q$. That is, the images of $e_i/e_{i_1}$ in $\kappa_L$ for $i=i_1,\dots,i_2$ are linearly independent over $\FF_q$.
\end{itemize}
Similarly, choose a normalized basis $f_1,f_2,\dots$ of $F_1$. Now rewrite the original presentation of $z$ in the form $\sum_{i,j} c_{i,j} e_i \otimes f_j$ with $c_{i,j} \in \FF_q$; this presentation has norm equal to $\lambda_r(z)$.
\end{proof}

\begin{remark} \label{R:sign splitting}
In the notation of Lemma~\ref{L:product norm}, the completion of $L \otimes_{\FF_q} F$ injects into
$k^{\Gamma_L \times \Gamma_K}$. It is a subtle problem to describe the image $R$, and we will not attempt to do so here. One remark we do make is that as a $k$-module, $R$ splits as a direct sum
$\bigoplus_{e_1,e_2 \in \{-,0,+\}} R_{e_1,e_2}$ in which $R_{e_1,e_2}$ is the set of elements of $R$ supported on pairs $(i,j)$ with signs $(e_1, e_2)$: namely, this splitting is obtained by splitting the factors of simple tensors in the form $x = x_+ + x_0 + x_-$ as per Definition~\ref{D:spherically complete}.
\end{remark}

\begin{lemma} \label{L:no abelian cover}
Every \'etale $\ZZ/p\ZZ$-cover of $X_{L,E} \times_E F$ splits.
\end{lemma}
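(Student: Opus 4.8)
The plan is to reduce the general case to the equal-characteristic case treated by Lemma~\ref{L:product norm} and Lemma~\ref{L:drinfeld schemes pro-p}, and then run an Artin--Schreier argument. First I would recall that an \'etale $\ZZ/p\ZZ$-cover of a reduced adic space $Z$ over $\FF_p$ (or over $\QQ_p$) is classified by $H^1(Z_{\et}, \ZZ/p\ZZ)$, which by Artin--Schreier theory in characteristic $p$ is $\coker(\varphi - 1 : \calO(Z) \to \calO(Z))$; so the statement to prove is that $\varphi - 1$ is surjective on $\calO(X_{L,E} \times_E F)$. By Lemma~\ref{L:vector bundle to phi-module} and the description of $Y_{L,E} \times_E F$ this global section ring is $\calO(Y_{L,E} \times_E F)^{\varphi_L^* = 1}$-equivariantly built from the rings $B^I_{L,E} \widehat{\otimes}_{W(\FF_q)_E} \frako_F$, so it suffices to understand $\varphi - 1$ on a suitable inverse limit of these.

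Next I would reduce to the case where $E$ has characteristic $p$. Since an \'etale $\ZZ/p\ZZ$-cover is a torsor, and since the relevant Artin--Schreier cohomology lives entirely in the ``generic fibre over $L$'' structure (the cover is pulled back from characteristic $p$ data), one can use Remark~\ref{R:interval perfectoid}: tilting $B^I_{L,E} \widehat{\otimes}_E A$ for a perfectoid $E$-algebra $A$ identifies \'etale covers, and hence $H^1(-,\ZZ/p\ZZ)$, of $X_{L,E} \times_E F$ with those of $X_{L,\FF_q((t))} \times_{\FF_q((t))} F^\flat$ after passing to a perfectoid cover of $F$. (If $E$ already has characteristic $p$ this step is vacuous; one just replaces $F$ by the completed algebraic closure of $F((t))$ if needed to make the coefficient field of the form $\kappa_F((t^{\Gamma_F}))$, using spherical completeness as in Definition~\ref{D:spherically complete}.) After this reduction, $L \cong \kappa_L((u^{\Gamma_L}))$ and $F \cong \kappa_F((t^{\Gamma_F}))$ as in Lemma~\ref{L:product norm}, and by Remark~\ref{R:char p curve} the ring $\calO(Y_L \times_E F)$ contains $L \otimes_{\FF_q} F$ densely, with the norms $\lambda_r$ computed explicitly by Lemma~\ref{L:product norm}.

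Now comes the heart of the matter: computing $\coker(\varphi - 1)$ on $\calO(Y_L \times_E F)^{\varphi_L = 1}$. Here $\varphi$ acts as the $q$-power Frobenius on the $F$-factor, fixing $L$, while $\varphi_L$ acts as $q$-power Frobenius on the $L$-factor. I would use the sign decomposition of Remark~\ref{R:sign splitting}, writing the completed ring $R$ as $\bigoplus_{e_1,e_2} R_{e_1,e_2}$ according to whether the $u$-exponents and $t$-exponents are negative, zero, or positive. On the ``positive/negative'' pieces (those with $e_2 \neq 0$, where the $t$-exponent is bounded away from $0$), $\varphi - 1$ is invertible: one inverts it by the geometric series $-(1 + \varphi + \varphi^2 + \cdots)$, which converges because applying $\varphi$ contracts the $\lambda_r$-norm on the $R_{+}$ part and the inverse series converges on the $R_{-}$ part — this is the same mechanism as in Corollary~\ref{C:units}, exploiting that $\lambda_r$ on $t^j$ is $p^{-j}$. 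On the piece with $e_2 = 0$, the coefficient ring degenerates to an Artin--Schreier situation over $\kappa_F$: after intersecting with the $\varphi_L^* = 1$ condition (which by the computation in Lemma~\ref{L:drinfeld schemes pro-p}, $(\kappa_L \otimes_{\FF_q} \kappa_F)^{\varphi_L} = \kappa_F$, kills the $L$-direction), one is left exactly with $\varphi - 1$ on $\kappa_F$ or on $\kappa_F((t^{\Gamma_F}))_0$-type rings, which is surjective because $\kappa_F$ is algebraically closed (Artin--Schreier again). Combining the pieces shows $\varphi - 1$ is surjective on the whole ring, so $H^1(X_{L,E} \times_E F, \ZZ/p\ZZ) = 0$ and the cover splits.

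The main obstacle I anticipate is making the convergence argument on the ``positive/negative'' sign pieces fully rigorous at the level of the Fr\'echet ring $\calO(Y_L \times_E F)$ rather than just on $L \otimes_{\FF_q} F$: one must check that the geometric-series inverse of $\varphi - 1$ actually lands back in the ring of global sections over \emph{all} intervals $I$ simultaneously (i.e., respects the $\varphi_L$-equivariance descent to $X_{L,E}$), and that the image $R$ of the completion is genuinely preserved — this is where the subtlety flagged in Remark~\ref{R:sign splitting} (that describing $R$ precisely is hard) bites, and one will want to work with norm estimates valid uniformly on compact intervals rather than with an explicit description of $R$. A secondary subtlety is bookkeeping the interaction of the two Frobenii $\varphi$ and $\varphi_L$ and ensuring the sign decomposition is compatible with both, and with the $\varphi_L^{*\ZZ}$-quotient defining $X_{L,E}$ as opposed to $Y_{L,E}$.
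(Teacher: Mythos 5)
Your overall skeleton (reduce to characteristic $p$, apply Artin--Schreier, decompose by signs of exponents as in Remark~\ref{R:sign splitting}, kill the positive sectors by a geometric series in $\varphi$, and feed a residual ``core'' into Lemma~\ref{L:drinfeld schemes pro-p}) matches the paper's strategy, but there is a genuine gap in the sectors with negative exponents. On a piece $R_{e_1,e_2}$ carrying a minus sign, $\varphi-1$ is \emph{not} invertible and your geometric series does not converge: applying $\varphi$ multiplies a $t$-exponent $j<0$ by $p$, so $\lambda_r(\varphi^m(y))=\lambda_r(y)^{p^m}\to\infty$, while the alternative series $\sum_m \varphi^{-m}(y)$ has terms whose norms tend to $1$, not $0$. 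In fact one should not expect $\varphi-1$ to be surjective on these sectors; what is true, and what the paper proves, is only that the \emph{$\varphi_L$-invariant} part of $H^1(\varphi,\cdot)$ vanishes there. Concretely, in the sectors $(0,-),(-,0),(-,-),(0,0)$ every element of $R$ is a finite sum of simple tensors, hence lies in $L\otimes_{\FF_q}F$, and the paper then invokes the vanishing $H^1(\varphi,L\otimes_{\FF_q}F)^{\varphi_L}=0$, extracted from Lemma~\ref{L:drinfeld schemes pro-p} via Hochschild--Serre; this is where the schematic Drinfeld input is genuinely used, whereas your proposal uses it only for the $(0,0)$ core. The mixed sectors $(+,-)$ and $(-,+)$, which your decomposition does not isolate, require yet another argument: there the paper uses the $\varphi_L$-invariance of the cocycle to show that the coefficient relations force $\sum_{n\geq 0} y^{p^n}$ to converge in $R$. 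Without exploiting $\varphi_L$-invariance, the negative and mixed sectors cannot be handled, so as written your argument does not close.

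A secondary issue: your reduction of the lemma to ``$\varphi-1$ is surjective on $\calO(X_{L,E}\times_E F)$'' suppresses both the coherent cohomology term in the Artin--Schreier long exact sequence and the descent along the $\varphi_L$-quotient. The paper avoids this by computing on the quasi-Stein space $Y_{L,E}\times_E F$, where higher coherent cohomology vanishes, so that $H^i_{\et}(\cdot,\ZZ/p\ZZ)\cong H^i(\varphi,R)$ for $i=0,1$, and then applying Hochschild--Serre for the $\varphi_L$-action; this produces exactly the two groups $H^1(\varphi_L,R^{\varphi})$ and $H^1(\varphi,R)^{\varphi_L}$ that must be shown to vanish, the first by comparison with $\Spec(k)/\Phi$ and Lemma~\ref{L:drinfeld schemes pro-p}, the second by the sector analysis above. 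Finally, the enlargement of $F$ (and $L$) to spherically complete fields needs the justification of Lemma~\ref{L:split on overfield}, and replacing $F$ by a completed algebraic closure of $F((t))$ does not yield a Hahn-series field---one must pass to a spherical completion.
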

\begin{proof}
By Remark~\ref{R:interval perfectoid}, we may assume that $E$ is of characteristic $p$.
By Lemma~\ref{L:split on overfield}, we may check the claim after enlarging both $L$ and $F$;
we may thus also assume that both fields are spherically complete.
We may then identify $L$ and $F$ with 
$\kappa_L((u^{\Gamma_L}))$ and $\kappa_F ((t^{\Gamma_F}))$, respectively; moreover,
the groups $\Gamma_L,\Gamma_F$ must be divisible and the fields
$\kappa_L, \kappa_F$ must be algebraically closed.

Define the rings $R := \bigcap_I B^I_{L,E} \widehat{\otimes}_E F = \calO(Y_{L,E} \times_E F)$,
$k := \kappa_L \otimes_{\FF_q} \kappa_F$.
Recall that since we are in characteristic $p$, by Remark~\ref{R:char p curve} we may interpret $B^I_{L,E} \widehat{\otimes}_E F$ as a completion of $L \otimes_{\FF_q} F$; as per Lemma~\ref{L:product norm} and
Remark~\ref{R:sign splitting}, we may write any element $x \in R$ 
as a formal sum $\sum_{(i,j) \in \Gamma_L \times \Gamma_F} x_{i,j} u^i t^j$ with
$x_{i,j} \in k$, and use this representation to compute $\lambda_r$ for all $r>0$.
We may further decompose $x$ canonically as $\sum_{e_1,e_2 \in \{-,0,+\}} x_{e_1, e_2}$
as in Remark~\ref{R:sign splitting}.

Recall the general Artin--Schreier construction: on any space of characteristic $p$ we have an exact sequence
of \'etale sheaves
\[
0 \to \underline{\FF_p} \to \calO \stackrel{\varphi-1}{\to} \calO \to 0.
\]
On the quasi-Stein space $Y_{L,E} \times_E F$, this yields isomorphisms
\[
H^i((Y_{L,E} \times_E F)_{\et}, \ZZ/p\ZZ) \cong H^i(\varphi, R) \qquad (i=0,1).
\]
Quotienting by $\varphi_L$ and considering the Hochschild--Serre spectral sequence yields a commutative diagram
\[
\xymatrix{
0 \ar[r] & H^1(\varphi_L,k^{\varphi}) \ar[r] \ar[d] & 
H^1((\Spec(k)/\Phi)_{\et}, \ZZ/p\ZZ) \ar[d]
 \\ 
0 \ar[r] & H^1(\varphi_L,R^{\varphi}) \ar[r] &  H^1((X_{L,E} \times_E F)_{\et}, \ZZ/p\ZZ) \ar[r] & H^1(\varphi, R)^{\varphi_L}.
}
\]
with exact rows. By Lemma~\ref{L:drinfeld schemes pro-p},
the terms in the top row vanish. From formal sums, we see that the left vertical arrow is an isomorphism.
Consequently, to finish we must check that $H^1(\varphi, R)^{\varphi_L} = 0$.

Before proceeding, we record a key observation. Again by the Artin--Schreier construction and the Hochschild--Serre spectral sequence, we have an exact sequence
\[
H^1((\Spec(L \otimes_{\FF_q} F)/\Phi)_{\et}, \ZZ/p\ZZ) \to
H^1(\varphi, L \otimes_{\FF_q} F)^{\varphi_L} \to H^2(\varphi_L, (L \otimes_{\FF_q} F)^{\varphi}) = 0.
\]
By Lemma~\ref{L:drinfeld schemes pro-p} again, the first term in this sequence vanishes; we thus deduce that
\begin{equation} \label{eq:from DL}
H^1(\varphi, L \otimes_{\FF_q} F)^{\varphi_L}  = 0.
\end{equation}

Note that the action of $\varphi-1$ on $R$ preserves the decomposition $R \cong \bigoplus_{e_1,e_2} R_{e_1, e_2}$.
Consequently, it suffices to check that $H^1(\varphi, R_{e_1, e_2})^{\varphi_L} = 0$ for all $e_1, e_2$.

First, suppose that $(e_1, e_2) = (0,+)$.
Any class in $H^1(\varphi, R_{0,+})$ may be represented  by an element $y_{0,+} \in R_{0,+}$ which is a convergent sum of products supported on $\{0\} \times [1, \infty)$: namely, all but finitely many terms already have the right support, and each remaining term (which initially is only constrained to have support in $\{0\} \times [c, \infty)$ for some $c>0$ depending on the term) can be replaced by its image under a suitable power of Frobenius to fix its support. However, we can now replace
$y_{0,+}$ with 
\[
y_{0,+} + z^p - z, \qquad z = \sum_{m=0}^\infty y_{0,+}^{p^m}
\]
to see that $y_{0,+}$ represents the zero class in $H^1(\varphi, R_{0,+})$.
We deduce that $H^1(\varphi, R_{0,+}) = 0$ even before taking $\varphi_L$-invariants.
Similar considerations apply in the cases $(e_1, e_2) = (+,0), (+,+)$.

Next, suppose that $(e_1, e_2) = (0,-)$. Any class in $H^1(\varphi, R_{0,+})^{\varphi_L}$ may be represented  by an element of $R_{0,+}$ which is a \emph{finite} sum of products supported on $\{0\} \times (-\infty, 0)$;
in particular, this yields an element of $L \otimes_{\FF_q} F$ whose image in $H^1(\varphi, L \otimes_{\FF_q} F)$
is again $\varphi_L$-invariant. By \eqref{eq:from DL}, this image vanishes, from which we deduce that
$H^1(\varphi, R_{0,-})^{\varphi_L} = 0$. Similar considerations apply in the cases
$(e_1, e_2) = (-, 0), (-,-), (0,0)$.

Finally, suppose that $(e_1, e_2) = (+,-)$. Any class in $H^1(\varphi, R_{+,-})^{\varphi_L}$
may be represented by an element $y_{+,-}$ of $R_{+,-}$ which is a convergent sum of products supported on
$[c_1, \infty) \times [c_2, 0)$ for some $c_1 > 0, c_2 < 0$ depending on $y_{+,-}$. Consider the formal expansion $\sum_{i,j} y_{i,j} u^i t^j$ of $y_{+,-}$;
since the resulting class in $H^1(\varphi, R_{+,-})$ is $\varphi_L$-invariant,
for every $i,j$ and every nonnegative integer $m$ we have
\[
\sum_{n \in \ZZ} y_{ip^n, jp^n}^{p^{-n}} = \sum_{n \in \ZZ} \varphi^m_{\kappa_L}(y_{ip^{n},jp^{n+m}}^{p^{-n-m}}).
\]
For $m$ sufficiently large (depending on $i,j$), the conditions $ip^n \geq c_1$ and $j p^{n+m} \geq c_2q$
on $n$ are incompatible, so the sum on the right is identically zero. 
From this, it follows that the formal sum $\sum_{n=0}^\infty y_{+,-}^{p^n}$ converges to an element of $R_{+,-}$, and hence that the class of $y_{+,-}$ in $H^1(\varphi, R_{+,-})$ vanishes. Similar considerations apply in the case $(e_1, e_2) = (-,+)$.
\end{proof}

\section{Inputs from $p$-adic differential equations}
\label{sec:pde}

We next bring in some relevant input from the theory of $p$-adic differential equations.

\begin{hypothesis}
Throughout \S\ref{sec:pde}, assume that $E$ is of characteristic 0.
Let $K_0$ be the completion of $F(T)$ for the Gauss norm.
(This field often called the \emph{field of analytic elements} over $F$ in the variable $T$; this terminology is due to Krasner \cite{krasner1, krasner2}.)
Let $K$ be a finite tamely ramified extension of $K_0$. (Note that references into \cite{kedlaya-book} only cover the case $K = K_0$;
see \cite[\S 2.2]{kedlaya-conv} for adaptations to the general case.)
\end{hypothesis}

\begin{defn} \label{D:subsidiary radii}
The derivation $\frac{d}{dT}$ on $F(T)$ is submetric for the Gauss norm, so it extends continuously to a derivation on $K$ with operator norm 1. By a \emph{differential module} over $K$, we will mean a finite-dimensional $K$-vector space $V$ equipped with a derivation $D$ satisfying the Leibniz rule with respect to $\frac{d}{dT}$. The example to keep in mind is a finite \'etale $K$-algebra equipped with the unique $K$-linear derivation extending $\frac{d}{dT}$.

Let $V$ be a differential module over $K$ of dimension $n$.
We define the \emph{subsidiary radii} of $V$ as in \cite[Definition~9.8.1]{kedlaya-book};
this is a multisubset of $(0,1]$ of cardinality $n$, and is invariant under base extension
\cite[Proposition~10.6.6]{kedlaya-book}. Geometrically, the subsidiary radii may be interpreted as the radii of convergence of local horizontal sections of $V$ in a generic unit disc \cite[Theorem~11.9.2]{kedlaya-book}.

Let $e^{-f_1(V)}, \dots, e^{-f_n(V)}$ be the subsidiary radii of $V$
listed in ascending order (with multiplicity),
and define
\[
F_i(V) := f_1(V) + \cdots + f_i(V) \qquad (i=1,\dots,n).
\]
\end{defn}

\begin{remark} \label{R:change of parameter}
Suppose that $K$ is also a finite tamely ramified extension of the completion of $F(T')$
for the Gauss norm. Then any differential module $V$ with respect to $\frac{d}{dT}$ is also
a differential module with respect to $\frac{d}{dT'}$, and the two resulting definitions of subsidiary radii coincide. This follows from the geometric interpretation in terms of convergence in a generic disc.
\end{remark}

\begin{remark}
One can generally predict properties of the subsidiary radii of a differential module $V$
by modeling them by the inverse norms of the eigenvalues of some linear transformation $T_V$ on some $n$-dimensional vector space over $K$, subject to the functoriality properties
\[
T_{V_1 \oplus V_2} = T_{V_1} \oplus T_{V_2}, \qquad
T_{V_1 \otimes V_2} = T_{V_1} \otimes 1_{V_2} + 1_{V_1} \otimes T_{V_2}. 
\]
\end{remark}

\begin{lemma}[Christol--Dwork] \label{L:christol-dwork}
Let $V$ be a differential module over $K$ of rank $n$.
Let $\bv \in V$ be a \emph{cyclic vector}, i.e., 
an element such that $\bv, D(\bv),\dots, D^{n-1}(\bv)$ form a basis of $V$.
(Such elements always exist; see for example \cite[Theorem~5.4.2]{kedlaya-book}.)
Write $D^n(\bv) = a_0 + a_1 \bv + \cdots + a_{n-1} D^{n-1}(\bv)$ with $a_0,\dots,a_{n-1} \in K$.
Form the multiset consisting of $p^{-1/(p-1)} \left| \lambda \right|^{-1}$ as $\lambda$ varies over the roots
of the polynomial $T^n - a_{n-1} T^{n-1} - \cdots - a_0$ in an algebraic closure of $K$.
Then this multiset coincides with the subsidiary radii of $V$ in its values less than $p^{-1/(p-1)}$.
\end{lemma}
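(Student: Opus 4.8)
The plan is to reduce the Christol--Dwork lemma to a computation of the radius of convergence of solutions of a first-order system by passing to the companion matrix of the cyclic vector, and then invoke the standard radius bounds for $p$-adic differential equations. Since a cyclic vector $\bv$ exists by \cite[Theorem~5.4.2]{kedlaya-book}, the differential module $V$ becomes identified with $K\{T\}/(P)$ where $P = T^n - a_{n-1}T^{n-1} - \cdots - a_0$ is the associated twisted polynomial in $\frac{d}{dT}$; equivalently $V$ is presented by the companion matrix $N$ of $P$. The subsidiary radii of $V$ are then governed by the spectral behavior of $N$ together with the operator $\frac{d}{dT}$.

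First I would recall that the subsidiary radii of $V$ are, by definition \cite[Definition~9.8.1]{kedlaya-book} and the generic-disc interpretation \cite[Theorem~11.9.2]{kedlaya-book}, the radii of convergence of a basis of horizontal sections of $V$ at a generic point of the unit disc. For a first-order system $\frac{d}{dT}\mathbf{w} = N\mathbf{w}$ with $N$ having entries of norm at most $1$ (which holds here because $\frac{d}{dT}$ has operator norm $1$ on $K$ and we may rescale the cyclic vector), the smallest radius is controlled by the eigenvalues of $N$: more precisely, an eigenvalue $\lambda$ of $N$ contributes a solution behaving like $e^{\lambda T}$, whose radius of convergence is $p^{-1/(p-1)}|\lambda|^{-1}$ when $|\lambda| \ge p^{1/(p-1)}$ (the classical radius of convergence of the $p$-adic exponential, rescaled). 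The radii below the threshold $p^{-1/(p-1)}$ — equivalently, the eigenvalues $\lambda$ with $|\lambda| > p^{1/(p-1)}$ — are exactly the ``visible'' part of the spectrum, and this is the content of the statement: the multiset of $p^{-1/(p-1)}|\lambda|^{-1}$ over roots $\lambda$ of $P$ matches the subsidiary radii of $V$ in the range below $p^{-1/(p-1)}$.

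The main technical point, and what I expect to be the chief obstacle, is justifying that the \emph{eigenvalue} model of the radii is exact (not merely an upper or lower bound) in the range below the threshold. This is where one must appeal carefully to the decomposition theory: by \cite[Chapter~10]{kedlaya-book}, a differential module over $K$ admits a decomposition by subsidiary radii, and in the range of radii strictly less than the ``intrinsic'' threshold $p^{-1/(p-1)}$ the module is controlled by its associated graded pieces, on each of which the radius is computed by the dominant term of the differential operator — which for the companion presentation is precisely the constant term $a_0,\dots,a_{n-1}$ packaged into the characteristic polynomial of $N$. One shows that twisting $P$ by a power of Frobenius (or passing to a suitable Puiseux cover) does not change the roots' norms in the relevant range, and that the Newton polygon of $P$ in $\frac{d}{dT}$ — whose slopes are the $\log$ of the radii — agrees in its steep part with the Newton polygon of the ordinary characteristic polynomial $T^n - a_{n-1}T^{n-1} - \cdots - a_0$. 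This is a standard but slightly delicate comparison, and I would extract it directly from the cited portions of \cite{kedlaya-book} (in particular \cite[Definition~9.8.1]{kedlaya-book} and the generic-radius computations of \cite[Theorem~11.9.2]{kedlaya-book}), rather than reproving it, since the excerpt explicitly licenses the use of all results stated there. The remaining steps — existence of the cyclic vector, the norm-$1$ normalization, and the bookkeeping of the threshold $p^{-1/(p-1)}$ — are routine.
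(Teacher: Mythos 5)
The paper does not actually reprove this statement: it is the theorem of Christol--Dwork, and the paper's ``proof'' is a citation to \cite[Th\'eor\`eme~1.5]{christol-dwork} and \cite[Theorem~6.5.3]{kedlaya-book}. Your proposal, read as an actual proof, has genuine gaps. First, the normalization claim is false: the companion matrix of $P$ has the coefficients $a_0,\dots,a_{n-1}$ as entries, and in the only interesting range (where the visible spectrum is nonempty) some of these necessarily have norm greater than $1$; rescaling the cyclic vector cannot make the matrix entries bounded by $1$, and in any case the operator norm of $\frac{d}{dT}$ on $K$ being $1$ has no bearing on the size of the $a_i$. Second, the statement that an eigenvalue $\lambda$ of the companion matrix ``contributes a solution behaving like $e^{\lambda T}$'' is a heuristic, not an argument: the matrix is not constant, its eigenvalues are not invariants of the differential module (they change under gauge transformations by derivative terms), and making this heuristic exact in the visible range is precisely the content of the lemma. (Your threshold bookkeeping is also off: radii below $p^{-1/(p-1)}$ correspond to $|\lambda|>1$, not $|\lambda|>p^{1/(p-1)}$.) Third, the ``standard but slightly delicate comparison'' between the Newton polygon of the twisted polynomial and that of the untwisted polynomial $T^n-a_{n-1}T^{n-1}-\cdots-a_0$ in its steep part is exactly the assertion to be proved; you assert it and propose to extract it from \cite[Definition~9.8.1]{kedlaya-book} and \cite[Theorem~11.9.2]{kedlaya-book}, but those only define the subsidiary radii and give the generic-disc interpretation. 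Worse, the decomposition theory of \cite[Chapter~10]{kedlaya-book} that you invoke is itself built on Theorem~6.5.3 of that book, i.e.\ on the present lemma, so the route as written is circular.

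If the intent is to use the result as a black box, the correct move is the paper's: cite \cite[Th\'eor\`eme~1.5]{christol-dwork} or \cite[Theorem~6.5.3]{kedlaya-book} directly. If the intent is to prove it, the argument must go through norms and Newton polygons of twisted polynomials (twisted Hensel-type factorizations comparing the spectral norm of $D$ on $V$ with the slopes of $P$), which none of your outlined steps reaches.
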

\begin{proof}
The original reference is \cite[Th\'eor\`eme~1.5]{christol-dwork}; see also \cite[Theorem~6.5.3]{kedlaya-book}.
\end{proof}

\begin{lemma} \label{L:pushforward}
Let $V$ be a differential module over $K$ of rank $n$.
Let $V'$ be the restriction of scalars of $V$ along the unique continuous $F$-linear homomorphism $K \to K$ taking $T$ to $T^p$.
View $V'$ as a differential module over $K$ of rank $pn$ using the derivation $D' = pT^{p-1} D$
(this is called the \emph{Frobenius pushforward} of $V$). Then the subsidiary radii of $V'$ coincide with the multiset consisting of
\[
\begin{cases} \{\rho^p\} \bigcup \left( \{p^{-p/(p-1)} \mbox{\,($p-1$ times)}\}  \right)& \rho > p^{-1/(p-1)} \\
\{p^{-1} \rho \mbox{\,($p$ times)}\} & \rho \leq p^{-1/(p-1)}
\end{cases}
\]
for $\rho$ running over the subsidiary radii of $V$.
\end{lemma}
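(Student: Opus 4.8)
The plan is to reduce the computation of subsidiary radii of the Frobenius pushforward $V'$ to a concrete question about local horizontal sections on generic discs, exploiting the geometric interpretation recalled in Definition~\ref{D:subsidiary radii}, and to prove it first in the rank-one (or cyclic) case via the Christol--Dwork formula (Lemma~\ref{L:christol-dwork}) together with the transformation law for the operator $D' = pT^{p-1}D$. The key input is the behaviour of the substitution $T \mapsto T^p$ on radii of convergence: on the generic disc of radius $1$ around a generic point, the map $z \mapsto z^p$ is \'etale and the preimage of a disc of radius $\rho$ is a disc of radius $\rho^{1/p}$ when $\rho$ is large, but for small $\rho$ (below the critical radius $p^{-1/(p-1)}$ where $z \mapsto z^p - z$ ceases to be an isometry on small discs) the geometry changes and one picks up the universal factor $p^{-1}$ coming from the derivative $pT^{p-1}$ combined with the convergence locus of the $p$-th power map.

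First I would set up the case $n = 1$: here $V$ has a single subsidiary radius $\rho$, and by Lemma~\ref{L:christol-dwork} (after passing to a cyclic presentation, which is automatic in rank one) $\rho$ is controlled by the ``exponent'' of the connection. The pushforward $V'$ has rank $p$, and I would compute its subsidiary radii directly from the structure of $K$ as a degree-$p$ extension of its image under $T \mapsto T^p$: choosing the basis $1, T, \dots, T^{p-1}$ of $K$ over the subring, the matrix of $D' = pT^{p-1}D$ is explicit, and applying Christol--Dwork to the resulting operator on this rank-$p$ module yields a polynomial whose roots split according to whether $\rho$ exceeds the critical radius. When $\rho > p^{-1/(p-1)}$ one root reflects the ``tame'' transformation $\rho \mapsto \rho^p$ and the remaining $p-1$ roots are pinned at the critical value $p^{-p/(p-1)}$; when $\rho \le p^{-1/(p-1)}$ all $p$ roots coincide at $p^{-1}\rho$. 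This is essentially a Newton-polygon computation on the characteristic polynomial of an explicit $p \times p$ matrix, and it is the one place where genuine arithmetic of $p$-adic numbers (the valuation of $p!$, the Artin--Hasse-type estimates) enters.

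Next I would promote to general $n$. Subsidiary radii are additive for direct sums and invariant under base extension (Definition~\ref{D:subsidiary radii}), and Frobenius pushforward is additive for direct sums; moreover by the cyclic vector theorem and the decomposition of a differential module over the (complete, one-dimensional) field $K$ into pieces of distinct ``visible'' radii, one can reduce to the situation where all subsidiary radii of $V$ are equal to a single $\rho$. Here I would use the refined decomposition of a differential module according to its highest break --- as in the theory of \cite[\S 10--11]{kedlaya-book} --- to reduce, after a finite tamely ramified base extension (harmless by invariance, and by Remark~\ref{R:change of parameter} this does not disturb the definitions), to the case where $V$ is essentially a successive extension of rank-one pieces all with radius $\rho$, at which point the rank-one computation applied termwise (using that the pushforward is exact and that subsidiary radii only depend on the associated graded) gives the stated multiset.

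The main obstacle I expect is the second half of the small-radius case, $\rho \le p^{-1/(p-1)}$: here the $p$-power map is not an isometry on the relevant discs, the Christol--Dwork formula controls only the radii below $p^{-1/(p-1)}$ (so it is applicable, but one must be careful that \emph{all} $p$ radii of $V'$ genuinely land in this range and that no ``spurious'' large radius appears from the cyclic presentation), and the bookkeeping of how the derivative factor $pT^{p-1}$ interacts with the off-diagonal entries of the connection matrix is delicate. A clean way to handle it is to argue geometrically instead: realize the subsidiary radii of $V'$ as radii of convergence of horizontal sections on a generic disc for the parameter $T$, pull back along $z \mapsto z^p$, and observe that a horizontal section of $V$ convergent on a disc of radius $\rho$ pulls back to one convergent on the preimage, whose radius is computed by the (well-understood) convergence behaviour of $z \mapsto z^p$ near a generic point; the factors $p^{-1}$ and $p^{-p/(p-1)}$ then emerge as the radius of the largest disc on which $z \mapsto z^p$ is injective, respectively as the critical radius itself. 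I would use whichever of the two arguments (Christol--Dwork bookkeeping or generic-disc geometry) gives the shorter writeup, cross-checking the answer against the functoriality heuristic $T_{V'} \leftrightarrow$ ``$T_V$ conjugated by the $p$-power map'' mentioned in the remark following Definition~\ref{D:subsidiary radii}.
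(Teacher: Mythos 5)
The paper does not prove this lemma at all: it simply cites \cite[Theorem~10.5.1]{kedlaya-book} (the Frobenius descendant theorem), so the only question is whether your sketch would actually constitute a proof. As written it has two genuine gaps. First, the d\'evissage to rank one does not work: a differential module over $K$ all of whose subsidiary radii are equal to a single $\rho$ need \emph{not} become a successive extension of rank-one modules after a finite tamely ramified base extension. The refined decomposition you invoke (the source of Lemma~\ref{L:end}, \cite[Theorems~10.6.2, 10.6.7]{kedlaya-book}) separates summands with distinct refined radii, but those summands can be irreducible of rank $>1$; splitting them into rank-one pieces generally requires wildly ramified extensions, which you are not allowed (and which would also destroy the tameness hypothesis on $K$). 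So the reduction ``rank-one case plus exactness of pushforward'' is not available, and the general case must be handled directly, as in the cited theorem.

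Second, your use of Lemma~\ref{L:christol-dwork} is miscalibrated. Christol--Dwork identifies only those subsidiary radii that are strictly less than $p^{-1/(p-1)}$. In the case $\rho \leq p^{-1/(p-1)}$ all $p$ radii of $V'$ equal $p^{-1}\rho \leq p^{-p/(p-1)}$, so this is exactly the case where the Newton-polygon bookkeeping can work (contrary to your stated worry); the problematic case is $\rho > p^{-1/(p-1)}$, where the single radius $\rho^p$ of $V'$ can be $\geq p^{-1/(p-1)}$ and is then invisible to the twisted characteristic polynomial, so no amount of matrix computation in the basis $1, T, \dots, T^{p-1}$ will pin it down. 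Closing this gap requires either the Frobenius antecedent theorem (\cite[Theorem~10.4.2]{kedlaya-book}) or an honest generic-disc argument. Your geometric alternative is the right picture --- around a generic point $t$ with $|t|=1$, the map $z \mapsto z^p$ carries the disc $|z-t| \leq r$ isomorphically onto a disc of radius $p^{-1}r$ when $r \leq p^{-1/(p-1)}$ and onto a disc of radius $r^p$ when $r \geq p^{-1/(p-1)}$, and the $p-1$ copies of $p^{-p/(p-1)}$ come from the branches through $\zeta t$ for $\zeta \neq 1$ a $p$-th root of unity, whose discs cease to be disjoint at radius $p^{-1/(p-1)}$ --- but your sketch neither carries out this computation nor addresses the multiplicity count, which needs the full filtration-by-convergence-radii interpretation (\cite[Theorem~11.9.2]{kedlaya-book}) rather than just the existence of one convergent horizontal section. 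Since that is essentially a reproof of the cited theorem, those details would have to be supplied for the argument to stand.
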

\begin{proof}
See \cite[Theorem~10.5.1]{kedlaya-book}.
\end{proof}

\begin{lemma} \label{L:end}
Let $V$ be a differential module over $K$ of rank $n$.
Suppose that the subsidiary radii of $V$ are all equal to some value $\rho<1$.
Then at least $n$ of the subsidiary radii of $V^\vee \otimes V$ are strictly greater than $\rho$.
\end{lemma}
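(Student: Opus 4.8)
The plan is to prove the equivalent assertion that at most $n^2-n$ of the $n^2$ subsidiary radii of $V^\vee\otimes V$ are $\le\rho$. The heuristic to keep in mind is the eigenvalue model: if the subsidiary radii of $V$ record the inverse norms of the eigenvalues $\lambda_1,\dots,\lambda_n$ of the connection on an associated graded object, then those of $V^\vee\otimes V$ record the inverse norms of the differences $\lambda_i-\lambda_j$; purity of $V$ forces all $|\lambda_i|$ to be equal, so the $n$ diagonal differences $\lambda_i-\lambda_i$ vanish (yielding subsidiary radii strictly above $\rho$) while the remaining off-diagonal differences have norm at most the common value (yielding subsidiary radii $\ge\rho$). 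I intend to make this rigorous using the Christol--Dwork description of small subsidiary radii (Lemma~\ref{L:christol-dwork}) together with the associated-graded formalism for differential modules from \cite{kedlaya-book}.

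First I would reduce to the case $\rho<\omega:=p^{-1/(p-1)}$. Repeated application of the Frobenius pushforward of Lemma~\ref{L:pushforward} moves every subsidiary radius below $\omega$ after finitely many steps; since pushforward destroys purity, one phrases this as a slightly more general statement bounding the multiplicity of the least subsidiary radius of $W^\vee\otimes W$ for an arbitrary differential module $W$, and tracks its behaviour under Lemma~\ref{L:pushforward} (together with the projection formula relating $(\psi_*W)^\vee\otimes\psi_*W$ to the pushforwards of $W^\vee\otimes[\zeta]^*W$ for $\zeta\in\mu_p$) --- or one simply invokes the decomposition of differential modules by subsidiary radii directly. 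Once $\rho<\omega$, pick a cyclic vector of $V^\vee\otimes V$ and apply Lemma~\ref{L:christol-dwork}: the subsidiary radii of $V^\vee\otimes V$ below $\omega$ are the numbers $\omega|\mu|^{-1}$ for $\mu$ a root of the associated polynomial, so it suffices to produce $n$ such roots of norm $<\omega/\rho$. Since $V$ is pure of radius $\rho<\omega$, the polynomial associated to a cyclic vector of $V$ has all $n$ of its roots of norm exactly $\omega/\rho$; passing to associated graded objects --- on which the connection acts through an honest endomorphism with these eigenvalues, compatibly with $\otimes$ and with duals --- the connection on the graded module attached to $V^\vee\otimes V$ acts with eigenvalues $\overline{\lambda}_j-\overline{\lambda}_i$, whose $0$-eigenspace is the commutant of the graded endomorphism attached to $V$ and hence has dimension at least $n$. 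This commutant is the associated graded of the direct summand of $V^\vee\otimes V$ on which every subsidiary radius exceeds $\rho$, giving the desired $n$ (or more); the other at most $n^2-n$ subsidiary radii are $\ge\rho$ because the remaining eigenvalues $\overline{\lambda}_j-\overline{\lambda}_i$ have norm $\le\omega/\rho$ (equivalently, because the spectral norm of the connection is subadditive under tensor product).

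The genuinely hard step is the reduction to $\rho<\omega$. The cleanly computable regime is $\rho<\omega$, where Christol--Dwork applies; but Frobenius pushforward, the only operation moving subsidiary radii in the right direction, does not preserve purity, so one must propagate a non-pure version of the statement through the pushforward, keeping careful track via Lemma~\ref{L:pushforward} of how the multiplicity at the bottom of the subsidiary-radius spectrum of $W^\vee\otimes W$ transforms and isolating the diagonal ($\zeta=1$) contribution from the twists. By contrast the soft half of the conclusion --- that every subsidiary radius of $V^\vee\otimes V$ is at least $\rho$ --- and the heuristic underlying the count are both straightforward; the real content is that purity of $V$ is precisely what forces $n$, rather than just one, of the subsidiary radii of $V^\vee\otimes V$ to lie strictly above $\rho$.
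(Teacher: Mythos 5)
Your visible-range core (Christol--Dwork plus the associated-graded commutant count) is morally the right idea --- it is essentially how the refined decomposition cited by the paper is proved --- but as written your proof has a genuine gap at the step you yourself identify as the hard one: the reduction to $\rho < p^{-1/(p-1)}$. Frobenius pushforward destroys purity, and your proposed bookkeeping does not recover the statement for $V$. Concretely, the projection formula gives $(\psi_*V)^\vee\otimes\psi_*V \cong \bigoplus_{\zeta\in\mu_p}\psi_*\bigl(([\zeta]^*V)^\vee\otimes V\bigr)$, and any ``multiplicity of the least subsidiary radius'' statement applied to $\psi_*V$ only constrains the bottom of the spectrum of this whole direct sum; that bottom is typically governed by the radius-$p^{-p/(p-1)}$ junk created by $\psi_*$ (Lemma~\ref{L:pushforward}) and by the twisted summands $\zeta\neq 1$, over which you have no control, so it says nothing about how many radii of the $\zeta=1$ piece, i.e.\ of $V^\vee\otimes V$, exceed $\rho$. (Note also that the natural-looking strengthening ``at least $n$ radii of $W^\vee\otimes W$ exceed the \emph{largest} radius of $W$'' is false for non-pure $W$, so one cannot simply propagate a stronger purity-free statement.) Your fallback --- ``invoke the decomposition by subsidiary radii directly'' --- is vacuous here: $V$ is pure, so there is nothing to decompose, and that decomposition never moves radii into the visible range. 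The standard repair is to use Frobenius \emph{antecedents} rather than pushforwards: for $\rho\in(p^{-1/(p-1)},1)$ one has $V\cong\psi^*W$ with $W$ pure of radius $\rho^p$ and $V^\vee\otimes V\cong\psi^*(W^\vee\otimes W)$, and pullback transforms subsidiary radii by a strictly increasing function, so the visible case propagates upward while purity is preserved; alternatively one cites the refined spectral decomposition, which is exactly what the paper does (\cite[Theorems~10.6.2, 10.6.7]{kedlaya-book}: after a tamely ramified extension $V=\bigoplus_i V_i$ with each $V_i$ refined, so all $\rank(V_i)^2$ radii of $V_i^\vee\otimes V_i$ exceed $\rho$, and $\sum_i \rank(V_i)^2\geq n$; subsidiary radii are insensitive to the extension).

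Separately, the visible-range step needs more care than you give it: to make the connection act as an ``honest endomorphism'' on an associated graded module you must first choose a norm (possibly after a further tamely ramified extension) for which the operator norm of $D$ equals its spectral norm $p^{-1/(p-1)}/\rho$, and the reduction of the cyclic-vector polynomial of $V^\vee\otimes V$ is \emph{not} the characteristic polynomial of the graded operator for an arbitrary cyclic vector, since a cyclic basis need not be adapted to the norm; one has to run the count through the Newton-polygon/visible-spectrum machinery of \cite[Chapter~6]{kedlaya-book} (or again through the refined decomposition). Likewise, identifying the kernel of the graded $\operatorname{ad}D$ with ``the associated graded of the direct summand of $V^\vee\otimes V$ on which every radius exceeds $\rho$'' presupposes a decomposition and a compatibility that are part of what is being proved. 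These points are fixable with the cited technology, but together with the failed reduction they mean the proposal does not yet constitute a proof.
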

\begin{proof}
This follows from the existence of a \emph{refined spectral decomposition} of $V$ over a suitable tamely ramified extension of $K$; see \cite[Theorem~10.6.2, Theorem~10.6.7]{kedlaya-book}.
\end{proof}

\begin{hypothesis}
For the remainder of \S\ref{sec:pde},
let $D$ be a one-dimensional smooth affinoid space over $F$ (viewed as an adic space).
Let $\calE$ be a vector bundle over $D$ of rank $n$ equipped with an $F$-linear connection.
\end{hypothesis}

\begin{defn} \label{D:residue field at point}
Let $x \in D$ be a point of type 2 or 5 in the sense of Definition~\ref{D:types of points}. 
Then the residue field $\calH(x)$ may be viewed as a 
finite tamely ramified extension of $K_0$ in some fashion (e.g., see the proof of \cite[Theorem~5.3.6]{kedlaya-conv}). We may thus define the quantities
$f_i(\calE, x), F_i(\calE,x)$ for $i=1,\dots,n$ by viewing the fiber $\calE_x$ as a differential module over $\calH(x)$. By Remark~\ref{R:change of parameter}, this definition does not depend on auxiliary choices.

Now let $x \in D$ be a point of type 3 or 4. Then after replacing $F$ with a suitable extension field, we may lift $x$ to a point of type 2 and apply the previous paragraph to define $f_i(\calE, x), F_i(\calE, x)$.
Again using Remark~\ref{R:change of parameter}, we may see that this definition does not depend on auxiliary choices.
\end{defn}

\begin{remark}
The previous definition can again be interpreted in terms of the convergence of local horizontal sections.
See \cite{kedlaya-simons} for an overview of the results one obtains in this manner.
\end{remark}

\begin{lemma} \label{L:transfer}
Let $U$ be an open disc in $D$ bounded by $x$.
If $f_1(\calE,x) = 0$, then the restriction of $\calE$ to $U$ has trivial connection.
\end{lemma}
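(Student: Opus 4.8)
Here is my plan for proving Lemma~\ref{L:transfer}.

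\medskip

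The plan is to reduce the statement to the situation of a generic unit disc and then invoke the geometric interpretation of subsidiary radii as radii of convergence of horizontal sections. Concretely, let $U$ be the open disc bounded by $x$, so that $x$ is the Gauss-type point on the boundary of $U$; after a translation and rescaling of the coordinate we may assume $U$ is the open unit disc and $x$ is the Gauss point, so that $\calH(x)$ is (a finite tamely ramified extension of) the field $K_0$ of analytic elements. The hypothesis $f_1(\calE,x)=0$ says that the smallest subsidiary radius of the fiber $\calE_x$, viewed as a differential module over $\calH(x)$ with respect to $\frac{d}{dT}$, equals $1$; since all subsidiary radii lie in $(0,1]$, this forces \emph{every} subsidiary radius of $\calE_x$ to equal $1$. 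By the interpretation of subsidiary radii as radii of convergence of local horizontal sections in a generic unit disc (\cite[Theorem~11.9.2]{kedlaya-book}, cited in Definition~\ref{D:subsidiary radii}), this means that a full basis of horizontal sections of $\calE$ converges on the entire generic disc of radius $1$ around $x$.

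\medskip

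From there, the key step is to transfer this convergence statement from the generic disc back to the actual disc $U \subseteq D$. Since $\calE$ is defined over the affinoid $D$ and has an $F$-linear connection, its restriction to $U$ is a vector bundle with connection over the open disc; writing it in a local coordinate and using a cyclic vector (Lemma~\ref{L:christol-dwork}), one sees that the connection matrix has entries that are rigid-analytic functions on $U$, and the radius-of-convergence computation at the generic point controls the radius of convergence of horizontal sections at every classical point of $U$ as well (this is the standard principle that the generic radius of convergence bounds the radius at all points of the disc of that radius; see the discussion around \cite[Theorem~11.9.2]{kedlaya-book}). Hence the formal horizontal sections of $\calE$ at the center of $U$ converge throughout $U$, giving $n$ linearly independent horizontal sections of $\calE$ over $U$; equivalently, $\calE|_U$ is isomorphic as a module-with-connection to $\calO_U^n$ with the trivial connection.

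\medskip

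The main obstacle I expect is the passage from the single numerical datum $f_1(\calE,x)=0$ to an honest trivialization over the \emph{whole} of $U$, rather than merely over discs of radius strictly less than $1$: a priori the radius-of-convergence statement is an open condition, so one must be careful that the horizontal sections extend to the full open disc $U$ and not just to its interior sub-discs. This is handled by noting that $U$ is exhausted by closed sub-discs, on each of which the horizontal sections are bona fide analytic functions with norm controlled by the (constant, equal to $1$) convergence radius, so the sections glue to elements of $\calO(U)$; since $U$ is a disc, $\calO(U)$ is a domain and the Wronskian of the $n$ sections, being horizontal and nonvanishing at the center, is a unit, so the sections form a basis at every point. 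The remaining verifications — that the cyclic-vector reduction is compatible with the tame base change implicit in Definition~\ref{D:residue field at point}, and that the translation/rescaling does not affect subsidiary radii (Remark~\ref{R:change of parameter}) — are routine.
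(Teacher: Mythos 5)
Your proposal is correct and is essentially the paper's own argument: the statement is precisely an instance of Dwork's transfer principle, which the paper proves by citing \cite[Theorem~9.6.1]{kedlaya-book}, and your ``standard principle that the generic radius of convergence bounds the radius at all points of the disc'' is exactly that theorem (your unpacking via \cite[Theorem~11.9.2]{kedlaya-book} and the exhaustion/Wronskian gluing is a reasonable expansion of it). The only quibble is that the cyclic-vector/Christol--Dwork step is superfluous here and would in any case see nothing, since Lemma~\ref{L:christol-dwork} only detects subsidiary radii below $p^{-1/(p-1)}$ while all radii in your situation equal $1$.
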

\begin{proof}
This is an instance of the Dwork transfer principle \cite[Theorem~9.6.1]{kedlaya-book}.
\end{proof}

\begin{lemma} \label{L:annulus}
Let $U$ be an open annulus in $D$ and let $T$ be a coordinate on $U$.
For each positive integer $m$, let $U_m$ be the $m$-fold cover of $U$ with coordinate $T^{1/m}$.
Suppose that there exists a finite \'etale cover $\pi: D' \to D$ such that the pullback of $\calE$ to $D'$ has trivial connection. 
If $f_1(\calE,x) =0$ for each $x$ in the skeleton of $U$, then there exists a positive integer $m \leq \deg(\pi)!$ not divisible by $p$ such that the pullback of $\calE$ to $U_m$ has trivial connection.
\end{lemma}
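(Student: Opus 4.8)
The plan is to analyze the local structure of $\calE$ along the skeleton of the annulus $U$ and show that the connection becomes trivial after a controlled tame pullback. First I would restrict attention to $U$ itself. The hypothesis $f_1(\calE,x)=0$ for every $x$ in the skeleton of $U$ means that at each such point the largest subsidiary radius of the fiber $\calE_x$ equals $1$; in other words, the ``generic radius of convergence'' is full along the whole skeleton. Combined with the fact that there is \emph{some} finite \'etale cover $\pi: D' \to D$ trivializing $\calE$, this says that the connection on $\calE|_U$ is, in the language of $p$-adic differential equations, unit-root (solvable) along the skeleton and has finite local monodromy of order dividing $\deg(\pi)!$ (since the Galois closure of $\pi$ has group embedding in $S_{\deg \pi}$, and the stalk monodromy representation of $\calE$ at a generic point of $U$ factors through a quotient of that Galois group).

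Next I would make the monodromy statement precise. Pick a type-$2$ point $x_0$ in the skeleton of $U$, so $\calH(x_0)$ is a finite tamely ramified extension of $K_0$, and $\calE_{x_0}$ is a differential module over $\calH(x_0)$ all of whose subsidiary radii equal $1$ (by the $f_1=0$ hypothesis applied at $x_0$ together with the convexity/variation of subsidiary radii along the skeleton forcing them all to be $1$ once the top one is). A differential module over such a field with all subsidiary radii equal to $1$ that becomes trivial after a finite \'etale pullback is precisely one whose associated representation of $\pi_1$ of the generic disc/annulus is a finite representation; since the pullback along $\pi$ kills it, this representation has order dividing $N := \deg(\pi)!$. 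The part of this finite monodromy coming from the tame quotient is cyclic of order prime to $p$ and dividing $N$; the wild ($p$-part) would contribute subsidiary radii strictly less than $1$ at $x_0$ (a nontrivial Artin--Schreier-type cover over the disc has positive differential Swan conductor, hence radius $<1$), contradicting that all subsidiary radii are $1$. Hence the monodromy is tame cyclic of order $m \mid N$, $p \nmid m$. Pulling back along $T \mapsto T^{1/m}$, i.e.\ passing to $U_m$, kills this cyclic monodromy, so $\calE|_{U_m}$ has all subsidiary radii $1$ at every skeleton point \emph{and} trivial monodromy.

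Finally I would upgrade ``trivial monodromy plus full radius of convergence along the skeleton'' to ``trivial connection on $U_m$.'' Here the key input is Lemma~\ref{L:transfer}: $U_m$ is an annulus, and any point on its skeleton still has $f_1 = 0$ (subsidiary radii are unchanged under the tame pullback, or one reapplies Lemma~\ref{L:pushforward}-type bookkeeping), so each open disc bounded by a skeleton point carries a trivial connection, and these glue across the skeleton because the monodromy around every skeleton circle is now trivial. Concretely, fixing one skeleton point $x$, Lemma~\ref{L:transfer} trivializes $\calE|_{U_m}$ on the two open discs cut out on either side (for an annulus, after collapsing along the skeleton one gets trivialization on a cofinal family of sub-annuli), and the resulting horizontal sections extend over all of $U_m$ since there is no obstruction to monodromy; a standard patching argument then produces a full basis of horizontal sections on $U_m$.

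The main obstacle I expect is the second step: rigorously extracting ``tame cyclic monodromy of order dividing $\deg(\pi)!$, prime to $p$'' from the combination of ``$f_1 = 0$ along the skeleton'' and ``trivialized by $\pi$.'' This requires ruling out wild ramification using the link between a nonzero differential Swan conductor and a subsidiary radius strictly below $1$ (i.e.\ the theory behind Lemma~\ref{L:christol-dwork} and Lemma~\ref{L:end}), and then bounding the prime-to-$p$ part by the degree of the Galois closure of $\pi$. The patching in the third step is comparatively routine given Lemma~\ref{L:transfer}, and the first step is essentially a restriction. \emph{Note: a cleaner route, which I would also consider, is to run the whole argument at a single type-$2$ point $x$ in the skeleton using the Christol--Dwork description, deduce triviality over $U_m$ directly from Lemma~\ref{L:transfer} on both sides of $x$, and only afterward check independence of the choice of $x$ via} Remark~\ref{R:change of parameter}.
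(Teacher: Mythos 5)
Your outline has the right skeleton (the $f_1=0$ hypothesis is the Robba condition along the skeleton, the trivializing cover should force the ``monodromy'' to be tame cyclic of bounded order, and the bad $p$-part should be excluded by the radius condition), but the step you yourself flag as the main obstacle is exactly the step that needs an actual mechanism, and neither Lemma~\ref{L:christol-dwork} nor Lemma~\ref{L:end} provides it. What the paper uses here is the Christol--Mebkhout theory of $p$-adic exponents: the Robba condition on $U$ produces an exponent $A \in \ZZ_p^n$ for $\calE$ \cite[Theorem~13.5.5]{kedlaya-book}; after shrinking $U$ so that $\pi^{-1}(U)$ has a connected component $U'$ which is an annulus of degree $d$ over $U$, both $dA$ and $0$ are exponents of the (trivial) pullback to $U'$, whence $dA \in \ZZ^n$ \cite[Theorem~13.5.6]{kedlaya-book}; thus $A \in \QQ^n$ with denominators prime to $p$ (because $A \in \ZZ_p^n$) and bounded by $d \leq \deg(\pi)$, and \cite[Theorem~13.6.1]{kedlaya-book} then makes the pullback to $U_m$ unipotent for $m$ the common denominator. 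Your alternative route through ``the tame quotient of $\pi_1$ of the annulus is cyclic'' is also not free: prime-to-$p$ covers of annuli are Kummer only after shrinking (L\"utkebohmert-type results), you give no argument that the wild part is excluded beyond an analogy with Artin--Schreier covers (in residue characteristic $p$ the relevant objects are degree-$p$ Kummer covers, whose exponents $j/p \notin \ZZ_p$ violate the Robba condition --- this is precisely what $A \in \ZZ_p^n$ encodes), and this route would a priori only trivialize $\calE$ on a subannulus of $U_m$ rather than on all of it.

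The final patching step is also incorrect as stated. ``Robba along the skeleton plus trivial monodromy'' does not imply trivial connection: the unipotent connection $d + N\,dT/T$ with $N$ nilpotent nonzero satisfies the Robba condition and has no finite monodromy, yet is nontrivial. Moreover the open discs bounded by skeleton points do not cover the annulus (they miss the skeleton itself), so the trivializations furnished by Lemma~\ref{L:transfer} on those discs cannot simply be glued; there is no covering to glue over. The missing ingredient is semisimplicity: since $\pi^*\calE$ has trivial connection, $\calE$ (and hence its pullback to $U_m$) is semisimple, and semisimple together with unipotent (from the exponent theory) gives trivial. Alternatively, had you established that $\calE|_U$ is trivialized by the Kummer cover $U_m \to U$ itself, triviality on $U_m$ would follow directly by pulling back along the diagonal component of $U_m \times_U U_m$, with no disc-by-disc argument needed; but that is again the content of the exponent computation, not of Lemma~\ref{L:transfer}.
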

\begin{proof}
We first note that thanks to \cite[Corollary~13.6.4]{kedlaya-book}, we may deduce the desired result from the corresponding statement about some smaller open annulus contained in $U$. We
are thus free to shrink $U$ as needed throughout the argument. To begin with, we may assume that $\pi^{-1}(U)$ admits a connected component $U'$ which is itself an annulus.

Let $\calF$ be the pushforward along $\pi$ of the trivial connection on $U'$.
Then $\calF$ is semisimple and $\calE$ occurs as a subobject, so $\calE$ is also semisimple.

The condition that $f_1(\calE, x) = 0$ for each $x$ in the skeleton of $U$ means, in classical language, that
$\calE$ satisfies the \emph{Robba condition} on $U$. We may thus apply the Christol--Mebkhout theory of \emph{$p$-adic exponents}
as described in \cite[Chapter~13]{kedlaya-book}.

Let $\calR^{\bd}$ be the ring of germs of bounded analytic functions on subannuli of $U$ with outer radius 1 (i.e.,
the \emph{bounded Robba ring} over $F$ in the sense of \cite[Definition~15.1.2]{kedlaya-book}); this is a henselian discrete valuation ring with residue field
$\kappa_F((t))$ where $\kappa_F$ is the residue field of $F$.
The corresponding ring associated to $U'$ is a finite extension $\calR^{\bd \prime}$ of $\calR^{\bd}$.

By hypothesis, $\calE \otimes \calR^{\bd \prime}$ carries the trivial connection; we claim that this remains true if we replace $\calR^{\bd \prime}$ with its maximal unramified subextension.
To this end, it suffices (by the Galois theory of extensions of henselian fields; see for example \cite[Chapter~3]{kedlaya-book})
 to check that if $\calR^{\bd \prime}$ is a cyclic totally ramified extension of prime order, then $\calE$ already carries the trivial connection. If this extension is tamely ramified,
then by Abhyankar's lemma it is obtained by making an extension of $F$ and the claim is evident. If it is wildly ramified, then it is cyclic of order $p$
and the restriction of scalars of $\calF$ splits as a direct sum of copies of $\calE$ twisted by the powers of a nontrivial rank-1 connection $\calL$ whose
$p$-th power is trivial. If $\calE$ is not itself trivial, then for some $i \in \{1,\dots,p-1\}$, $\calL^{\otimes i}$ occurs as a subquotient of the tensor product of two connections
satisfying the Robba condition, so $\calL$ itself satisfies the Robba condition.
We may thus apply \cite[Theorem~13.5.5]{kedlaya-book} to choose an exponent $B \in \ZZ_p$ for $\calL$.
Since $pB$ is then an exponent for $\calL^{\otimes p}$ whose connection is trivial, we may apply \cite[Theorem~13.5.6]{kedlaya-book}
to deduce that $pB \in \ZZ$. It follows that $B \in \ZZ$ and so $\calL$ is itself trivial; this yields a contradiction which proves the claim.

We may now assume that $\calR^{\bd \prime}$ is an unramified extension of $\calR^{\bd}$, so that $\calE$
corresponds to a representation of the Galois group of $\kappa_F((t))$. We may now apply  
\cite[Theorem~19.4.1]{kedlaya-book} to see that this representation is itself at most tamely ramified.
By Abhyankar's lemma again, this representation becomes unramified if we pull back from $U$ to $U_m$
for some $m \leq \deg(\pi)!$ not divisible by $p$; this yields the desired result.
\end{proof}

\section{Relative connections}
\label{sec:rel con FF}

We now extend the discussion of $p$-adic connections to a relative setting, with the key case being when the base field is replaced by a Fargues--Fontaine curve.

\begin{hypothesis}
Throughout \S\ref{sec:rel con FF}, let $A$ be a uniform Huber ring over $\QQ_p$.
\end{hypothesis}

\begin{defn}
Let $\calM(A)$ denote the Gel'fand spectrum of $A$ in the sense of Berkovich, i.e., the space of all
bounded multiplicative seminorms on $A$, equipped with the evaluation topology;
it may be identified with the maximal Hausdorff quotient of the adic spectrum $\Spa(A,A^\circ)$. 
Since $A$ is assumed to be uniform, the supremum over $\calM(A)$ is a norm on $A$ which induces the correct topology on $A$.
For $\alpha \in \calM(A)$, let $\ker(\alpha) := \alpha^{-1}(0)$ be the inverse image of $0$ under $\alpha$,
and let $\calH(\alpha) := \widehat{A/\ker(\alpha)}$ denote the (completed) residue field of $\alpha$.

For $\alpha \in \calM(A)$, we say that a rational localization $A\to B$ (in the sense of Huber rings)
\emph{encircles} $\alpha$ if the map $\calM(B) \to \calM(A)$ identifies $\calM(B)$ with a neighborhood of $\alpha$ in $\calM(A)$. Such neighborhoods form a neighborhood basis of $\alpha$ in $\calM(A)$.
\end{defn}

\begin{defn} \label{D:analytic elements}
Equip the ring $A \langle T \rangle$ with the Gauss extension of the norm on $A$.
For each $\alpha \in \calM(A)$, let $\tilde{\alpha} \in \calM(A \langle T \rangle)$ be its Gauss extension.
Let $S$ be the multiplicative subset of $s \in A \langle T \rangle$
for which $\tilde{\alpha}(s) \neq 0$ for all $\alpha \in \calM(A)$
(equivalently, the coefficients of $s$ have no common zero in $\calM(A)$).
We may then form a new Huber ring $R_A$ by completing the algebraic localization $A \langle T \rangle_S$
for the supremum of the seminorms induced by $\tilde{\alpha}$ for each $\alpha \in \calM(A)$.
By construction, $R_A$ is again a uniform Huber ring; 
in the case where $A = K$ is a nonarchimedean field with norm $\alpha$, the ring $R_K$ is simply the completion of $K(T)$ with respect to the multiplicative norm $\tilde{\alpha}$.
Note that any homomorphism $A \to B$ of uniform Huber rings induces a homomorphism $R_A \to R_B$.
\end{defn}

\begin{defn}
By a \emph{differential module} over $R_A$, we will mean a finite
projective $R_A$-module $M$ equipped with a derivation $D$ satisfying the Leibniz rule with respect to $\frac{d}{dT}$. We define the \emph{subsidiary radii} of $M$ at $\alpha \in \calM(A)$
by base extension from $R_A$ to $R_{\calH(\alpha)}$ and application of
Definition~\ref{D:subsidiary radii}.
For $\alpha \in \calM(A)$
let $e^{-f_1(M, \alpha)}, \dots, e^{-f_n(M,\alpha)}$ be the subsidiary radii of $M$ at $\alpha$
listed in ascending order (with multiplicity),
and define
\[
F_i(M, \alpha) := f_1(M, \alpha) + \cdots + f_i(M, \alpha) \qquad (i=1,\dots,n).
\]
\end{defn}

\begin{lemma} \label{L:convex}
Let $I$ be a closed subinterval of $(0, \infty)$.
Let $M$ be a differential module over $R_A$ of constant rank $n$ for $A = B^I_{L,E} \widehat{\otimes}_{W(\overline{\FF}_q)_E} \frako_F$. 
Then for $i=1,\dots,n$, the function
\[
t \mapsto F_i(M, \lambda_t)
\]
on $I$ is convex in $t$.
\end{lemma}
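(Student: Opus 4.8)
The plan is to express, after a Frobenius pushforward, the functions $F_i(M,\lambda_t)$ through the Newton polygon of the characteristic polynomial of a cyclic vector, and then to feed in the convexity of $t\mapsto\log\lambda_t$ supplied by Lemma~\ref{L:curve over extension field}(b); the Frobenius pushforward is needed because the Christol--Dwork description of subsidiary radii in terms of such a polynomial (Lemma~\ref{L:christol-dwork}) is only available for radii below $p^{-1/(p-1)}$. I would begin with harmless reductions. Convexity is local on $I$, so we fix an interior point $t_0$ and may shrink $I$ freely; in particular we may assume $M$ is free over $R_A$, say with $D=\frac{d}{dT}+N$. The functions $t\mapsto f_i(M,\lambda_t)$ are continuous and piecewise affine (a standard feature of subsidiary radii in families, see \cite{kedlaya-book}), so it is enough to prove convexity on a dense open subset of $I$, excluding the finitely many parameters at which some such function fails to be locally affine. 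On such a subset we may, using the refined spectral decomposition underlying Lemma~\ref{L:end}, split off the maximal subobject of $M$ on which the connection is \'etale-locally trivial; it contributes only zeros to the multiset of the $f_i$ and hence leaves every $F_i$ unchanged, so we may assume every subsidiary radius of $M$ at each $\lambda_t$ is $<1$ and, after a further shrinking of $I$, is bounded away from $1$ uniformly on $I$.

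Next I would iterate the Frobenius pushforward of Lemma~\ref{L:pushforward}. Because each pushforward multiplies the exponents $f_i=-\log(\text{radius})$ by $p$ (truncating those below $\frac{1}{p-1}\log p$ as prescribed) and creates no new exponent smaller than $\frac{1}{p-1}\log p$, for $k$ large the resulting differential module $M^{(k)}$ over $R_A$ has all of its subsidiary radii at every $\lambda_t$ below $p^{-1/(p-1)}$. On the dense open subset where the combinatorial type of the multiset of radii is locally constant, the pushforward recipe of Lemma~\ref{L:pushforward} expresses each $F_i(M,\lambda_t)$ as a locally affine function, with locally constant coefficients, of the functions $F_j(M^{(k)},\lambda_t)$; convexity therefore propagates, and we are reduced to the case in which \emph{all} subsidiary radii of $M$ lie below $p^{-1/(p-1)}$.

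In that case I would choose a cyclic vector for $M$ whose Wronskian is a unit in a localization of $R_A$ encircling $\lambda_{t_0}$ (a genericity argument, using that the residue fields are large), and let $P_t(X)=X^n-a_{n-1}X^{n-1}-\cdots-a_0$ be its characteristic polynomial, the $a_\ell$ being regarded in the completed residue field $R_{\calH(\lambda_t)}$ with Gauss norm $\widetilde\lambda_t$. By Lemma~\ref{L:christol-dwork} the subsidiary radii of $M$ at $\lambda_t$ are exactly the $p^{-1/(p-1)}\widetilde\lambda_t(\lambda)^{-1}$ for $\lambda$ a root of $P_t$, whence
\[
F_i(M,\lambda_t)=\frac{i}{p-1}\log p+\max_{0\le j\le n-i\le j'\le n}\ \frac{(j'-(n-i))\log\widetilde\lambda_t(a_j)+((n-i)-j)\log\widetilde\lambda_t(a_{j'})}{j'-j},
\]
the maximum being the negative of the value at abscissa $n-i$ of the Newton polygon of $P_t$ with respect to $-\log\widetilde\lambda_t$. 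Since $a_\ell=b_\ell/W$ with the $b_\ell$ polynomial expressions in the entries of the connection matrix and $W$ the Wronskian, and since $t\mapsto\log\lambda_t(x)$ is convex for the relevant elements $x$ by Lemma~\ref{L:curve over extension field}(b) while $t\mapsto\log\widetilde\lambda_t(W)$ is affine (being the Gauss norm of a unit), each $t\mapsto\log\widetilde\lambda_t(a_\ell)$ is convex; hence every summand in the displayed maximum is a nonnegative linear combination of convex functions, hence convex, and a maximum of finitely many convex functions is convex. This yields the claim.

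The step I expect to be the main obstacle is the reduction to the range below $p^{-1/(p-1)}$: making the Frobenius-pushforward bookkeeping precise (including the preliminary removal of the \'etale-trivial part and the uniform choice of the number of pushforwards near $t_0$), together with the attendant, low-technology but fussy, task of producing a cyclic vector whose Wronskian is a local unit and whose coefficients therefore lie in a subring of $R_{\calH(\lambda_t)}$ on which Lemma~\ref{L:curve over extension field}(b) delivers the convexity one needs.
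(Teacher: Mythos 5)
Your final step --- cyclic vector, Christol--Dwork, reading off the partial sums of the largest Newton-polygon slopes as maxima of nonnegative affine combinations of the convex functions $t \mapsto \log\widetilde\lambda_t(a_\ell)$ --- is essentially the paper's core computation. The genuine gap is in your reduction to the case where all subsidiary radii lie below $p^{-1/(p-1)}$. First, Frobenius pushforward never moves a radius equal to $1$: if $f_j(M,\lambda_t)=0$ (which happens, e.g., for any summand pulled back from the base, and more generally cannot be excluded), then no finite iterate of Lemma~\ref{L:pushforward} brings that radius below the threshold; and even when all $f_j>0$ pointwise, the number of pushforwards needed near $t_0$ requires a \emph{uniform} positive lower bound on the $f_j$ in a neighborhood. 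Your device for securing this --- splitting off ``the maximal subobject on which the connection is \'etale-locally trivial'' over the family and invoking continuity/piecewise-affinity of $t\mapsto f_i(M,\lambda_t)$ to pass to a dense open set of locally constant combinatorial type --- is not available at this stage: a module with some subsidiary radii equal to $1$ need not have such a direct summand at all, and, more seriously, decomposing $M$ by radii over a subinterval and controlling the variation of the $f_i$ in this relative setting (base ring $B^I_{L,E}\widehat{\otimes}\frako_F$ with the norms $\lambda_t$, rather than an annulus over a field as in the results of \cite{kedlaya-book} you cite) is exactly what the paper later extracts \emph{from} this convexity lemma, via Lemma~\ref{L:sup const} and Lemma~\ref{L:decomposition}. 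Using those facts here is circular. The same issue undermines the ``inversion'' step in which $F_i(M,\lambda_t)$ is recovered from $F_j(M^{(k)},\lambda_t)$ with locally constant coefficients: that bookkeeping needs to know, locally in $t$, which radii sit above or below the threshold. (A smaller unsubstantiated point: the affineness of $t\mapsto\log\widetilde\lambda_t(W)$ for the Wronskian is asserted from $W$ being a unit, but Corollary~\ref{C:units} concerns units of $B^J_{L,E}\widehat{\otimes}\frako_F$, not of the analytic-element ring $R$ over it.)

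The paper avoids all of this with a truncation trick: it proves that for each $\epsilon=\frac{p^{-m}}{p-1}\log p$ the function $t\mapsto\sum_{j=1}^i\max\{f_j(M,\lambda_t),\epsilon\}$ is convex, working locally around an interior $t$. The base case $m=0$ is exactly your sub-threshold computation (cyclic vector over the fiber at $\lambda_t$, extended to a subinterval by Corollary~\ref{C:units}, then Lemma~\ref{L:christol-dwork} plus convexity of $t\mapsto\log\lambda_t$); the induction step is a single application of Lemma~\ref{L:pushforward}, which relates the $\epsilon$-truncated sums of $M$ to the $p\epsilon$-truncated sums of its pushforward without any need to locate radii relative to the threshold or to decompose $M$. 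Letting $m\to\infty$ gives convexity of $F_i$ as a pointwise limit of convex functions. If you want to salvage your write-up, replace the ``split off the radius-one part and push all radii below the threshold'' reduction by this truncated-sum induction; the rest of your argument then goes through.
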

\begin{proof}
It suffices to check that for any $\epsilon > 0$, the function
\[
t \mapsto \sum_{j=1}^i \max\{f_j(M, \lambda_t), \epsilon\}
\]
is convex.
It further suffices to check this claim locally around some interior point $t \in I$.

We first check the claim for $\epsilon = \frac{1}{p-1} \log p$.
By arguing as in \cite[Lemma~11.5.1]{kedlaya-book}, we may construct a basis $v_1,\dots,v_n$ of
$M \otimes_{R_A} R_{B^{J}_{L,E} \widehat{\otimes}_{W(\overline{\FF}_q)_E} \frako_F}$
for some closed subinterval $J$ of $I$ containing $t$ in its interior,
such that the values of $F_i(M, \lambda_u)$ for $u \in J$ can be read off from the characteristic
polynomial of the matrix of action of $\frac{d}{dT}$ on this basis.
We may then deduce the claim from the convexity of $t \mapsto \log \lambda_t(x)$
(see Definition~\ref{D:norms}).

To conclude, it suffices to check the claim for $\epsilon = \frac{p^{-m}}{p-1} \log p$
for each nonnegative integer $m$. For this, we apply the previous argument to treat the base case $m=0$
and Lemma~\ref{L:pushforward} to handle the induction step.
\end{proof}

\begin{defn}
Let $X$ be an adic space over some nonarchimedean field $K$ over $\QQ_p$.
Let $D$ be a subset of the analytic affine line over $K$ containing the Gauss point (the boundary of the closed unit disc).
Let $\calE$ be a vector bundle of constant rank $n$
on $X \times_K D$ equipped with an $\calO_X$-linear connection.
By base extension to Definition~\ref{D:subsidiary radii}, 
we may define the functions $f_i(\calE, x)$ and $F_i(\calE,x)$ for $x \in X$ and $i \in \{1,\dots,n\}$.
\end{defn}

\begin{hypothesis}
For the remainder of \S\ref{sec:rel con FF},
let $D$ be a connected rational subspace of the analytic affine line over $F$ containing the Gauss point,
and let $\calE$ be a vector bundle of constant rank $n$
on $X_{L,E} \times_E D$ equipped with an $\calO_{X_{L,E} \times_E F}$-linear connection.
Let $\tilde{\calE}$ be the pullback of $\calE$ to $Y_{L,E} \times_E D$.
\end{hypothesis}

\begin{lemma} \label{L:sup const}
For $i=1,\dots,n$, the function $x \mapsto f_i(\calE, x)$ on $Y_{L,E} \times_E F$ is constant.
We will hereafter denote by $f_i(\calE)$ this constant value.
\end{lemma}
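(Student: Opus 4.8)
The plan is to show that $x \mapsto f_i(\calE, x)$ is both locally constant and, on each connected component, globally forced to be constant by a convexity/affineness argument analogous to the one used in Corollary~\ref{C:units}. The two pieces of input are Lemma~\ref{L:convex} (convexity of $t \mapsto F_i(M,\lambda_t)$ along the intervals $I$ that cover $Y_{L,E}$), and the geometry of $X_{L,E} \times_E F$: it is built from the pieces $\Spa(B^I_{L,E}, B^{I,\circ}_{L,E}) \times_E F$ glued along the Frobenius action $\varphi_L^*$, and it is connected (Remark~\ref{R:FF connected}).

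First I would fix $i$ and pass to the level of $Y_{L,E} \times_E F$, where the connection on $\calE$ pulled back along $\pi$ becomes a differential module $M$ over the relevant rings $R_A$ with $A = B^I_{L,E} \widehat{\otimes}_{W(\overline{\FF}_q)_E} \frako_F$ (after the harmless extension of scalars $\FF_q \hookrightarrow \overline{\FF}_q$ discussed in Lemma~\ref{L:curve over extension field}; the value of $f_i$ is unaffected by finite extension of the base, per Definition~\ref{D:subsidiary radii}). For each such $I = [r,s]$, the function $t \mapsto F_i(M, \lambda_t)$ is convex on $I$ by Lemma~\ref{L:convex}, hence so is $t \mapsto F_1(M,\lambda_t)$ and therefore, taking successive differences of convex functions is not automatically convex, so I would instead argue with the $F_i$ directly: each $F_i$ is convex, and $F_i - F_{i-1} = f_i$. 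Now the key point: $Y_{L,E}$ is covered by intervals $I = [tq^n, tq^{n+1}]$ for $n \in \ZZ$, and $\varphi_L^*$ identifies the slice at $\lambda_t$ with the slice at $\lambda_{t/q}$; since $\calE$ descends to $X_{L,E} \times_E D$, the datum $M$ is $\varphi_L$-equivariant, so $F_i(M, \lambda_t) = F_i(M, \lambda_{tq})$ for all $t$. A convex function on all of $(0,\infty)$ that is invariant under $t \mapsto qt$ must be constant; hence $t \mapsto F_i(M, \lambda_t)$ is constant, and therefore $f_i(\calE, x) = f_i$ is independent of $x$ as $x$ ranges over the type-2 points that are the Gauss points of the slices $\lambda_t$.

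To upgrade from these distinguished points to all of $X_{L,E} \times_E F$, I would invoke the local constancy of $x \mapsto f_i(\calE, x)$ on each affinoid slice $\Spa(B^I_{L,E}, B^{I,\circ}_{L,E}) \times_E F$: this is a standard semicontinuity-plus-constructibility statement for subsidiary radii in families (it can be extracted from the convexity results of \cite{kedlaya-book}, or more directly from Lemma~\ref{L:convex} applied to rational localizations encircling a given point, together with Corollary~\ref{C:units}, which controls when $\log \lambda_t$ is affine and hence when the slice picture degenerates). Combined with connectedness of $X_{L,E} \times_E F$ (Remark~\ref{R:FF connected}) and the fact that every connected component meets the family of type-2 Gauss points $\lambda_t$, local constancy forces $f_i(\calE, x)$ to equal the single value $f_i$ computed above everywhere.

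\textbf{The main obstacle} I anticipate is the passage between "convexity in $t$ along the radial direction" and "local constancy in the transverse directions on a slice": the function $f_i$ is not globally convex as a function on the slice (only the partial sums $F_i$ are convex along $t$), so one cannot simply quote a one-variable argument. The cleanest fix is to show directly that the $F_i$, as functions on the base $B^I_{L,E} \widehat{\otimes}_{W(\overline{\FF}_q)_E}\frako_F$, are continuous and piecewise-affine on each slice with only finitely many pieces — a structure theorem of exactly the kind proved in \cite[Chapter~11]{kedlaya-book} for differential modules on one-dimensional spaces — so that a locally constant integer-linear-combination-of-valuations datum on a connected space is constant. If that structural input is available, the rest is the $t \mapsto qt$ invariance trick plus connectedness, both of which are already in hand.
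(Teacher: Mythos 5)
Your plan correctly isolates the two needed ingredients (radial convexity from Lemma~\ref{L:convex} plus Frobenius invariance), but it has a genuine gap precisely at the point where the paper's proof does its real work, and a second gap that you yourself flag but whose proposed fix is not available. First, the identity $F_i(M,\lambda_t)=F_i(M,\lambda_{tq})$ on a fixed slice is unjustified. After base extension to $F$, the space $Y_{L,E}\times_E F$ is disconnected: by Remark~\ref{R:FF connected} its components form a torsor under $\widehat{\ZZ}$, and at each radius $t$ there is a whole $\widehat{\ZZ}$-family of Gauss points, one per component (your ring $B^{[t,t]}_{L,E}\widehat{\otimes}_{W(\overline{\FF}_q)_E}\frako_F$ is a single component). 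The Frobenius $\varphi_L$ does not preserve a component; it shifts it by $1\in\ZZ\subseteq\widehat{\ZZ}$ while rescaling the radius, so equivariance only gives $F_i(\calE,(t,y))=F_i(\calE,(tq,y+1))$. The convex-plus-periodic-implies-constant argument therefore applies only to $\sup_{y\in\widehat{\ZZ}}F_i(\calE,(t,y))$, not to the function on a fixed component. The paper closes this with an extra step you omit: using Lemma~\ref{L:christol-dwork} and Lemma~\ref{L:pushforward} it shows the set of maximizing $y$ is nonempty and open for each $t$, each maximizing component's radial function is convex, bounded by the constant supremum, and attains it (hence is identically equal to it), and then $\ZZ$-invariance together with density of $\ZZ$ in $\widehat{\ZZ}$ forces constancy on all of $(0,\infty)\times\widehat{\ZZ}$.

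Second, the transverse constancy on each slice cannot be outsourced to a ``continuity plus piecewise-affineness'' structure theorem in the style of \cite[Chapter~11]{kedlaya-book}: those results govern variation in the one-dimensional disc/annulus coordinate over a field, whereas here the variation is over $\calM(B^{[t,t]}_{L,E}\widehat{\otimes}_{W(\overline{\FF}_q)_E}\frako_F)$, which is not a curve, and in any case piecewise affineness would not by itself yield local constancy. The paper instead proves slice-wise constancy directly: after truncating the $f_j$ at $\epsilon$ and using Lemma~\ref{L:pushforward} to reduce to $\epsilon=\frac{1}{p-1}\log p$ where Lemma~\ref{L:christol-dwork} applies, it takes a cyclic vector and uses the already-established radial constancy (via the affineness criterion of Corollary~\ref{C:units}) to show that every coefficient contributing to the Newton polygon at $\lambda_t$ has graded image in $\Gr F$; hence its norm is the same at every point $\alpha$ of the slice, and the subsidiary radii are read off identically everywhere. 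Without this mechanism (or a genuine substitute), your argument establishes constancy only at the distinguished Gauss points, and even that only modulo the first gap.
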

\begin{proof}
We proceed by strong induction on $i$.
We start by using Remark~\ref{R:FF connected} to make an identification of topological spaces
\[
\{\lambda_t: t > 0\} \times_E F \cong 
(0, +\infty) \times \widehat{\ZZ}.
\]
We then note that
\[
t \mapsto \sup\{F_i(\tilde{\calE}, x): x \in \{t\}  \times \widehat{\ZZ} \}
\]
is both convex (by Lemma~\ref{L:convex})
and invariant under $t \mapsto tq$; it must therefore be constant.
Denote by $c$ the constant value.

Our next goal is to check that $F_i(\tilde{\calE},x)$ is in fact constant on $(0, +\infty) \times \widehat{\ZZ}$.
This is guaranteed in case $c$ equals the known constant value of $F_{i-1}(\tilde{\calE},x)$, as this is a lower bound on $F_i(\tilde{\calE},x)$; we may thus assume that the difference 
\[
c' = \sup\{f_i(\tilde{\calE},x): x \in \{t\} \times \widehat{\ZZ} \}
\]
is positive (and again independent of $t$).

From the proof of Lemma~\ref{L:convex} (and using that $c'>0$), we see that for any fixed $t$, the set $U_t$ of $y \in \widehat{\ZZ}$ for which
$x = (t, y)$ maximizes $F_i(\tilde{\calE}, x)$ is nonempty and open in $\widehat{\ZZ}$. 
Choose a set $U$ of the form $U_{t_0}$ for some $t_0 > 0$. 
For $y \in U$, the function $t \mapsto F_i(\tilde{\calE},(t,y))$ on $(0, +\infty)$ is convex (again by Lemma~\ref{L:convex}), has $c$ as an upper bound everywhere, and achieves this bound at one point; it therefore is identically equal to $c$.

By Remark~\ref{R:FF connected} again, 
for $y$ belonging to any translate of $U$ under the action of $\ZZ \subset \widehat{\ZZ}$,
 $F_i(\tilde{\calE}, (t,y)) = c$ for all $t>0$.
Since $U$ is nonempty and open, its translates cover $\widehat{\ZZ}$;
hence $F_i(\tilde{\calE}, x) = c$ for all $x \in (0, +\infty) \times \widehat{\ZZ}$.

At this point, it now suffices to check that the function
$\alpha \mapsto F_i(\tilde{\calE}, \alpha)$ on 
$\calM(B^{[t,t]}_{L,E} \widehat{\otimes}_{W(\overline{\FF}_q)_E} \frako_F)$ is constant.
As in the proof of Lemma~\ref{L:convex}, it suffices to check that for any $\epsilon > 0$, the function
\[
\alpha \mapsto \sum_{j=1}^i \max\{f_j(\tilde{\calE}, \alpha), \epsilon\}
\]
is constant; again using Lemma~\ref{L:pushforward}, we may further reduce to the case $\epsilon = \frac{1}{p-1} \log p$. To check this case, set notation as in the proof of Lemma~\ref{L:convex}.
Let $a_0,\dots,a_{n-1}$ be the coefficients of the characteristic polynomial of the matrix of action of
$\frac{d}{dT}$ on the basis $v_1,\dots,v_n$. Carrying the equality $F_i(\tilde{\calE}, (u,y)) = c$
(for $u$ in some neighborhood of $t$) back through the proof of Lemma~\ref{L:convex},
we see that if $a_i$ contributes a vertex to the Newton polygon for $\alpha = \lambda_t$, then the position of that vertex varies linearly (in a neighborhood of $t$).
This in turn implies that the image of $a_i$ in $\Gr R_{B^{[t,t]}_{L,E} \widehat{\otimes}_{W(\overline{\FF}_q)_E} \frako_F}$ 
belongs to $\Gr R_F$ (compare \cite[Theorem~11.2.1(c)]{kedlaya-book}).

To conclude, note that as we vary $\alpha \in \calM(B^{[t,t]}_{L,E} \widehat{\otimes}_{W(\overline{\FF}_q)_E} \frako_F)$, the spectral norm of $\frac{d}{dT}$ on $R_{\calH(\alpha)}$
does not change. Consequently, as in the proof of Lemma~\ref{L:convex}, we may continue to argue as in \cite[Lemma~11.5.1]{kedlaya-book}
to read off $\sum_{j=1}^i \max\{f_j(\tilde{\calE}, \alpha), \epsilon\}$ from $a_1,\dots,a_n$ just as we did for $\alpha = \lambda_t$;
this yields the desired result.
(By contrast, the corresponding argument in the context of \cite{kedlaya-book} is more subtle due to the variation of the spectral norm of $\frac{d}{dT}$; see for example \cite[Lemma~4.3.12]{kedlaya-book}.)
\end{proof}

\begin{lemma} \label{L:decomposition}
After possibly shrinking $D$, there exists a unique decomposition $\calE = \calE_1 \oplus \cdots \oplus \calE_k$
of vector bundles with connection (with nonzero summands) with the following properties.
\begin{enumerate}
\item[(i)]
For $i=1,\dots,k$, we have $f_1(\calE_i) = \cdots = f_{\rank(\calE_i)}(\calE_i)$.
\item[(ii)]
We have $f_1(\calE_1) > \cdots > f_1(\calE_k)$.
\end{enumerate}
\end{lemma}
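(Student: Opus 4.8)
The plan is to take $\calE = \calE_1 \oplus \cdots \oplus \calE_k$ to be the canonical decomposition of $\calE$ by subsidiary radius. By Lemma~\ref{L:sup const} the quantities $f_i(\calE)$ are well-defined constants; after reindexing assume $f_1(\calE) \le \cdots \le f_n(\calE)$ and let $c_1 > c_2 > \cdots > c_k$ be the distinct values occurring among them. The aim is to exhibit, for each $j$, a vector bundle with connection $\calE_j$, realized as a direct summand of $\calE$, which is \emph{pure of radius $e^{-c_j}$} in the sense that all of its subsidiary radii equal $e^{-c_j}$; granting this, properties (i) and (ii) hold by construction, and only uniqueness remains to be discussed.

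For existence I would argue locally. Pulling back along $Y_{L,E} \times_E D \to X_{L,E} \times_E D$, it suffices to decompose the corresponding $\varphi_L$-equivariant vector bundle with connection on $Y_{L,E} \times_E D$; working locally on a cover by the spaces appearing in Lemma~\ref{L:convex}, the problem becomes that of decomposing a differential module over $R_A$ with $A = B^I_{L,E} \widehat{\otimes}_{W(\overline{\FF}_q)_E} \frako_F$ (see also Remark~\ref{R:FF connected}). Over each such $R_A$, Lemma~\ref{L:sup const} tells us that the subsidiary radii are constant; then, combining a cyclic vector (which spreads out over a neighborhood by Corollary~\ref{C:units}), the Christol--Dwork description of subsidiary radii via Newton polygons (Lemma~\ref{L:christol-dwork}), and Frobenius pushforward to descend to the visible range of radii (Lemma~\ref{L:pushforward}) — precisely as in the proofs of Lemma~\ref{L:convex} and Lemma~\ref{L:sup const} — one sees that the Newton polygon splits into segments of locally constant slope, and the decomposition-by-spectral-radius theory for $p$-adic differential modules \cite{kedlaya-book} applies. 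The resulting decomposition is canonical, $\calE_j$ being characterized as the largest sub-object of $\calE$ pure of radius $e^{-c_j}$; hence it is functorial, in particular $\varphi_L$-equivariant, so the local decompositions agree on overlaps, glue over $Y_{L,E} \times_E D$, and descend to $X_{L,E} \times_E D$. Deleting zero summands and ordering so that $c_1 > \cdots > c_k$ gives the asserted decomposition.

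For uniqueness the key point is that a morphism of vector bundles with connection from one pure of radius $s_1$ to one pure of radius $s_2$ with $s_1 \neq s_2$ must vanish. This can be checked on the fibers over the points of $X_{L,E} \times_E F$, where it reduces to the vanishing of horizontal morphisms between pure differential modules of distinct radii over a complete field; the latter holds because for such modules $\calF', \calF''$ the tensor product $(\calF')^\vee \otimes \calF''$ is pure of radius $\min(s_1,s_2) < 1$ (again by \cite{kedlaya-book}, using Frobenius pushforward for the invisible range) and so admits no nonzero horizontal section. Now suppose $\calE = \bigoplus_a \calF_a$ is a second decomposition with properties (i) and (ii). The composites $\calE_j \hookrightarrow \calE \twoheadrightarrow \calF_a$ are morphisms of vector bundles with connection, hence vanish unless $f_1(\calE_j) = f_1(\calF_a)$; since by (ii) each value of $f_1$ is attained by at most one summand on either side, this sets up a bijection of the index sets under which $\calE_j$ and the matching $\calF_a$ are each contained in the other as sub-bundles of $\calE$, and therefore coincide. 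Thus $k$ and the $\calE_j$ are uniquely determined.

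The main obstacle is the local existence step. The rings $R_A$ above are not noetherian (reflecting the fact that the value group of $F$ is not discrete), so one must verify that the decomposition-by-spectral-radius theory is available in this generality, and that the resulting decomposition is canonical in a strong enough sense to glue across the covering of $Y_{L,E} \times_E D$ and to descend through the $\varphi_L$-action. The device that brings the general case within reach is the reduction to the ``visible'' range of radii via Frobenius pushforward (Lemma~\ref{L:pushforward}), already exploited to establish convexity and constancy of the $F_i$ in Lemmas~\ref{L:convex} and~\ref{L:sup const}.
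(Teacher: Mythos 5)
Your skeleton is essentially the paper's: reduce to the ``visible'' range of subsidiary radii by Frobenius pushforward (Lemma~\ref{L:pushforward}), pass to a cyclic vector over $R_A$ with $A = B^{[t,t]}_{L,E} \widehat{\otimes}_{W(\overline{\FF}_q)_E} \frako_F$ and spread it to an interval via Corollary~\ref{C:units}, read off the radii from the Newton polygon of the resulting polynomial via Christol--Dwork (Lemma~\ref{L:christol-dwork}), and assemble the local decompositions by canonicity. Your uniqueness argument (vanishing of horizontal morphisms between summands pure of distinct radii, via the internal Hom being pure of radius strictly less than $1$) is sound, and is in fact spelled out more fully than in the paper, where uniqueness is left implicit; granted uniqueness, your gluing and $\varphi_L$-descent step is also fine.

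The genuine gap is the one you flag yourself: the local existence of the splitting over $R_A$. The ``decomposition by spectral radius'' results of \cite{kedlaya-book} that you invoke are proved for differential modules over complete nonarchimedean \emph{fields}; they do not apply as stated over the non-noetherian Banach rings $R_A$ arising here, so as written the pivotal step is an appeal to a theory whose applicability in this generality is exactly what has to be established. The paper closes this by a concrete device rather than a general theory: after using Frobenius pushforwards (as in the proof of \cite[Theorem~12.2.2]{kedlaya-book}) to bring the top subsidiary radii into the visible range, it splits off only the extreme piece --- the summand accounting for the indices $j$ with $f_j(\calE) = f_1(\calE)$ --- by applying a Newton-polygon form of Hensel's lemma \cite[Theorem~2.2.2]{kedlaya-book} directly to the polynomial coming from the cyclic vector over $B^{J}_{L,E} \widehat{\otimes}_{W(\overline{\FF}_q)_E} \frako_F$. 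This is legitimate because the norms $\lambda_t$ on that ring are multiplicative (Lemma~\ref{L:curve over extension field}), and the slope break needed for the factorization is uniform across the fiber by the constancy statement of Lemma~\ref{L:sup const} (which shows the relevant leading terms lie in $\Gr F$). Iterating this two-term splitting, and gluing via the uniqueness you proved, yields the full decomposition. So: correct strategy and correct uniqueness, but the key local factorization must be carried out by hand via Hensel's lemma over the Banach ring, not quoted from the field case.
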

\begin{proof}
We may assume that $n > 0$.
Let $j$ be the largest integer such that $f_j(\calE) = f_1(\calE)$; it suffices to split $\calE$
as a direct sum $\calE_1 \oplus \calE_2$ with $\rank(\calE_1) = j$ such that $f_i(\calE_1) = f_1(\calE)$
for all $i$ and $f_i(\calE_2) < f_1(\calE)$ for all $i$. For this, we may assume that $j<n$,
and use Frobenius pushforwards
(as in Lemma~\ref{L:pushforward}, but see more specifically the proof of \cite[Theorem~12.2.2]{kedlaya-book}) to reduce to the case
where $f_1(\calE) > p^{-1/(p-1)}$. In this case,
set notation as in the proof of Lemma~\ref{L:sup const}; we may then deduce the claim by applying 
a suitable version of Hensel's lemma, such as \cite[Theorem~2.2.2]{kedlaya-book}, to the polynomial coming from the cyclic vector.
\end{proof}

\begin{remark}
Consider an open disc in the analytic affine line over $F$ bounded by the Gauss point. If the entire disc is contained in $D$, then the conclusion of Lemma~\ref{L:decomposition} holds without removing any points of the disc, again as in the proof of \cite[Theorem~12.2.2]{kedlaya-book}. By contrast, if some of the disc is missing from $D$, then more of it may have to be removed in order to achieve the conclusion of  Lemma~\ref{L:decomposition}.
\end{remark}

\section{Elimination of one parameter}
\label{sec:elimination}

\begin{hypothesis} \label{H:elimination}
Throughout \S\ref{sec:elimination},
suppose that every finite \'etale cover of $X_{L,E} \times_E F$ splits completely.
Let $D$ be a one-dimensional smooth affinoid space over $F$.
Let $x \in D$ be a point of type 2, 3, 4 in the sense of Definition~\ref{D:types of points};
in the type 2 case, assume further that $D$ is a strict neighborhood of $x$.

Let $\rho: \pi_1^{\prof}(X_{L,E} \times_E D) \to \GL(V)$ be a discrete representation
on a finite-dimensional $F$-vector space $V$.
We will make various statements that hold not for $\rho$ itself, but its restriction
to $\pi_1^{\prof}(X_{L,E} \times D')$ for some connected \'etale neighborhood $D'$ of $x$ in $D$
(which in the type 2 case must again be a strict neighborhood of some preimage of $x$).
To indicate this restriction, we will say that such statements hold \emph{after replacing $D$}.
\end{hypothesis}

\begin{defn} \label{D:rep to bundle}
Let $f: U \to X_{L,E} \times_E D$ be a finite \'etale cover such that the restriction of $\rho$
to $\pi_1^{\prof}(U)$ splits completely.
We then obtain a diagonal action of $\pi_1^{\prof}(X_{L,E} \times_E D, \overline{x})$
on $f_* \calO_U \otimes_F V$; let $\calE_{\rho}$ be the fixed submodule for this action,
viewed as a vector bundle on $X_{L,E} \times_E D$.
The connection on $\calE_{\rho}$ is induced by differentiation on $\calO_U$ with respect to $X_{L,E}$.

For $n = \dim(V)$, for $y \in D$ of type 2 or 5 with residual genus 0, we may define quantities
$f_1(\calE_{\rho}, y), \dots, f_n(\calE_{\rho}, y)$ using Lemma~\ref{L:sup const};
for $y$ of type 3 or 4, we may define these quantities by enlarging $F$ to lift $y$ to a point of type 2 or 5
(which will necessarily have residual genus 0) and then applying
Lemma~\ref{L:sup const} after suitable rescaling. 
(It is possible to extend the construction to points of type 2 or 5 of positive genus, but this is not crucial here.)
\end{defn}

\begin{lemma} \label{L:zero splits}
Suppose that $f_i(\calE_\rho, y) = 0$ for all $i$ and $y$. Then $\rho$ becomes trivial after replacing $D$.
\end{lemma}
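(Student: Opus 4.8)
The plan is to convert the vanishing of every $f_i(\calE_\rho, y)$ into the statement that the connection on $\calE_\rho$ becomes trivial after a suitable finite \'etale replacement of $D$, and then to invoke Hypothesis~\ref{H:elimination} to kill the monodromy that remains in the $X_{L,E}$-direction. For the first point I would use a relative form of the Dwork transfer principle: restricting to the fibers over the classical points of $X_{L,E}$, whose residue fields are complete and algebraically closed, and using Lemma~\ref{L:sup const}, the hypothesis says that on each such fiber the differential module attached to $\calE_\rho$ has all subsidiary radii equal to $1$; by Lemma~\ref{L:transfer} its restriction to any open disc bounded by a point of type $2$ or $3$ has trivial connection, and by Lemma~\ref{L:annulus} its restriction to any open annulus $U$ on whose skeleton $f_1(\calE_\rho,\cdot)$ vanishes becomes trivial after pulling back along the prime-to-$p$ cover $U_m \to U$ for a suitable $m$ bounded in terms of the degree of a cover trivializing $\rho$. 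These fiberwise trivializations glue to statements over $X_{L,E}\times_E U$ once one checks that the relevant horizontal sections form a bundle of full rank, which follows by a quasi-Stein argument exactly as in the proof of Lemma~\ref{L:vector bundle to phi-module}; alternatively, one may quote the relative theory directly from \cite{kedlaya-book}.

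Next I would build the replacement $D'$ according to the type of $x$. If $x$ has type $4$, take $D'$ to be an open disc containing $x$ in its interior; its boundary is a point at which all $f_i(\calE_\rho,\cdot)$ vanish, so $\calE_\rho$ has trivial connection on $X_{L,E}\times_E D'$. If $x$ has type $3$, take $D'=U_m$ for $U$ a small annular neighborhood of $x$; again the connection on $\calE_\rho$ becomes trivial after this tame base change. If $x$ has type $2$, we may assume after shrinking that $D$ is a strict neighborhood of $x$ reducing to a proper smooth curve $C$ over $\kappa_F$; since $\calE_\rho$ has trivial connection on every residue disc of $D$, the finite \'etale cover of $X_{L,E}\times_E D$ attached to $\rho$ has a well-defined reduction, which is a finite \'etale cover $\overline{W}\to C$, and we take $D'$ to be the strict neighborhood of a preimage of $x$ whose reduction is $\overline{W}$ (a connected finite \'etale neighborhood of that preimage). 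In all three cases we have arranged that $\calE_\rho$ restricts to $X_{L,E}\times_E D'$ with trivial connection; in the type $2$ case, the reduction of the attached cover over $D'$ is moreover split.

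It remains to descend and conclude. A vector bundle with trivial $\calO_{(X_{L,E}\times_E F)}$-linear connection on $X_{L,E}\times_E D'$ is the pullback of its bundle of horizontal sections on $X_{L,E}\times_E F$ (finiteness of horizontal sections again by the quasi-Stein argument), so unwinding the construction in Definition~\ref{D:rep to bundle}, the finite \'etale cover attached to $\rho|_{\pi_1^{\prof}(X_{L,E}\times_E D')}$ is the base change, along the projection $X_{L,E}\times_E D' \to X_{L,E}\times_E F$, of a finite \'etale cover $W_0$ of $X_{L,E}\times_E F$. By Hypothesis~\ref{H:elimination}, $W_0$ splits completely; hence so does its base change, which is to say that $\rho$ becomes trivial after replacing $D$. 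I expect the main obstacle to be the relative Dwork transfer over the non-noetherian rings $B^I_{L,E}\widehat{\otimes}_E F$, and especially the type $2$ case: one must check that the cover attached to $\rho$ genuinely reduces to a finite \'etale cover of $C$ and that, after passing to the strict neighborhood reducing to $\overline{W}$, it descends all the way to $X_{L,E}\times_E F$. It is precisely here that the freedom to replace $D$ by a finite \'etale cover, rather than merely to shrink it, is indispensable.
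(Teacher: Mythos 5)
Your overall strategy matches the paper's (kill the $D$-direction monodromy using Lemma~\ref{L:transfer} and Lemma~\ref{L:annulus}, treat the type 2 case via the proper reduction curve, then invoke Hypothesis~\ref{H:elimination}), but the two places where you wave your hands are exactly the nontrivial points, and as written they are gaps. First, the passage from fiberwise statements to statements over $X_{L,E}\times_E D'$: Lemmas~\ref{L:transfer} and~\ref{L:annulus} concern connections on a one-dimensional space over a complete field, so they apply only to the restriction of $\calE_\rho$ to $z\times_F D$ for a geometric point $z$ of $X_{L,E}\times_E F$ (with $F$ replaced by $\calH(z)$). Your claim that ``these fiberwise trivializations glue'' because the horizontal sections form a full-rank bundle, by a quasi-Stein argument, is not an argument: the quasi-Stein lemma gives finite generation of global sections, not that a relative connection which is trivial on every fiber is pulled back from the base. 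The paper avoids this issue entirely: it works with the monodromy, shows for each $z$ that the restriction of $\rho$ to $\pi_1^{\prof}(z\times_F D)$ dies after replacing $D$, spreads this out to a neighborhood $U$ of $z$ (so that $\rho$ restricted to $\pi_1^{\prof}(U\times_F D)$ factors through $\pi_1^{\prof}(U)$), and then uses quasicompactness of $X_{L,E}$ to choose a single replacement of $D$ valid for a finite covering family of such $U$; this shows $\rho$ factors through $\pi_1^{\prof}(X_{L,E}\times_E F)$, which is trivial under Hypothesis~\ref{H:elimination}. You need some substitute for this spreading/compactness step; your descent of the cover to a cover $W_0$ of $X_{L,E}\times_E F$ presupposes it rather than proves it.

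Second, in the type 2 case the assertion that the cover attached to $\rho$ ``has a well-defined reduction, a finite \'etale cover $\overline{W}\to C$'' over $\kappa_F$ is unjustified, and it is precisely the difficulty you flag at the end without resolving. What the disc and annulus lemmas give fiberwise is that over each geometric point $z$ the cover of $z\times_F D$ is residually trivial, hence comes from a finite \'etale cover of $C_\ell$, where $\ell$ is the residue field of $\calH(z)$ --- a huge algebraically closed field varying with $z$. To obtain a single cover of $C$ working for all $z$ you need properness of $C$ (this is where the strict-neighborhood hypothesis enters) together with the invariance of the profinite fundamental group of a proper curve under extension of algebraically closed base fields, i.e.\ that $\pi_1^{\prof}(C_\ell)\to\pi_1^{\prof}(C)$ is a homeomorphism (the paper cites \cite[Corollary~4.1.19]{kedlaya-aws} for this), and then again the compactness argument over the curve. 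Without these inputs your construction of $D'$ in the type 2 case, and hence the final descent and appeal to Hypothesis~\ref{H:elimination}, does not go through; once you add them, your argument essentially collapses back to the paper's proof.
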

\begin{proof}
We first verify that  for each geometric point $z$ of $X_{L,E} \times_E F$,
after replacing $D$,
the restriction of $\rho$ to $\pi_1^{\prof}(z \times_F D)$ 
becomes trivial.
\begin{itemize}
\item
If $x$ is of type 3, then after replacing $D$, we may assume that $D$ is an annulus.
By Lemma~\ref{L:annulus}, the restriction of $\rho$ to $\pi_1^{\prof}(z \times_F D)$
becomes trivial after replacing $D$.
\item
If $x$ is of type 4, then after replacing $D$, we may assume that $D$ is a disc.
By Lemma~\ref{L:transfer}, the restriction of $\rho$
to $\pi_1^{\prof}(z \times_F D)$ is trivial.
\item
If $x$ is of type 2, then for all but finitely many specializations $x'$ of $x$, we may find
an open disc $W$ in $D$ bounded by $x'$; by Lemma~\ref{L:transfer}, the restriction of $\rho$
to $\pi_1^{\prof}(z \times_F W)$ is trivial. For each of the other remaining specializations $x'$,
we may find an open annulus $W$ in $D$ bounded by $x'$ at one end; 
by Lemma~\ref{L:annulus}, the restriction of $\rho$ to $\pi_1^{\prof}(z \times_F W)$ becomes trivial after replacing $D$. Combining these results, we see that after replacing $D$, the restriction of $\rho$
to $\pi_1^{\prof}(z \times_F D)$ factors through $\pi_1^{\prof}(C_\ell)$ where $C$ is the residual curve of $D$ at $x$
and $\ell$ is the residue field of $\calH(z)$. Since $C$ is proper by hypothesis, we may apply \cite[Corollary~4.1.19]{kedlaya-aws}
to deduce that $\pi_1^{\prof}(C_\ell) \to \pi_1^{\prof}(C)$ is a homeomorphism; consequently, after replacing $D$, the restriction of $\rho$ to $\pi_1^{\prof}(z \times_F D)$ becomes trivial.
\end{itemize}
For any given $z$, it formally follows that after replacing $D$,
there exists some neighborhood $U$ of $z$ in $X_{L,E} \times_E F$
such that the restriction of $\rho$ to $\pi_1^{\prof}(U \times_F D)$ 
factors through $\pi_1^{\prof}(U)$.
By compactness, we may choose $D$ uniformly over some finite set of geometric points $z$ for which the neighborhoods $U$ form a covering of $X_{L,E}$; we then deduce that (after replacing $D$)
the restriction of $\rho$ to $\pi_1^{\prof}(X_{L,E} \times_E D)$ 
factors through $\pi_1^{\prof}(X_{L,E} \times_E F)$.
As we are working under Hypothesis~\ref{H:elimination}, the latter group is trivial; this proves the claim.
\end{proof}

\begin{lemma} \label{L:full splits}
Under no additional hypotheses, $\rho$ becomes trivial after replacing $D$.
\end{lemma}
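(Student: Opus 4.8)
The plan is to bootstrap from Lemma~\ref{L:zero splits} using the canonical decomposition of Lemma~\ref{L:decomposition}. After replacing $D$ by a disc, annulus, or strict neighbourhood of $x$ appropriate to the type of $x$ (enlarging $F$ when $x$ has type $3$ or $4$ so as to lift it to a type $2$ point, as in Definition~\ref{D:rep to bundle}), apply Lemma~\ref{L:decomposition} to write $\calE_\rho = \calE_1 \oplus \cdots \oplus \calE_k$ with each $\calE_i$ isoclinic. This decomposition descends to $\rho$: pulled back along any finite \'etale Galois cover $U \to X_{L,E} \times_E D$ trivialising $\rho$, the bundle $\calE_\rho|_U$ becomes the trivial connection $\calO_U \otimes_F V$, whose subconnections are precisely $\calO_U \otimes_F W$ for $W \subseteq V$, while the descent action is the diagonal one (Galois on $\calO_U$, $\rho$ on $V$); hence each $\calE_i|_U = \calO_U \otimes_F V_i$ for a subrepresentation $V_i \subseteq V$, and $\rho = \bigoplus_i \rho_i$ with $\calE_{\rho_i} = \calE_i$. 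It thus suffices to handle a single $\rho_i$, so we may assume $\calE_\rho$ is isoclinic of some break $c = f_1(\calE_\rho) \geq 0$; passing to composition factors, we may further assume $\rho$ irreducible.

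If $c = 0$, then $f_i(\calE_\rho, x) = 0$ for all $i$; propagating this to all points near $x$ via the Dwork transfer principle (Lemma~\ref{L:transfer}) and the annulus lemma (Lemma~\ref{L:annulus}), just as in the first half of the proof of Lemma~\ref{L:zero splits}, and then applying Lemma~\ref{L:zero splits}, we conclude that $\rho$ becomes trivial after replacing $D$. The entire difficulty is therefore concentrated in excluding the possibility that $\rho$ is irreducible with $\calE_\rho$ isoclinic of \emph{positive} break.

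To do this I would combine two facts. First, by Lemma~\ref{L:end} applied to $\calE_\rho$, the connection $\calE_\rho^\vee \otimes \calE_\rho = \calE_{\rho^\vee \otimes \rho}$ has at least $\dim V$ of its subsidiary radii strictly larger than $e^{-c}$; since $F$ has characteristic $0$ it splits as $\calO \oplus \calE_{\mathrm{ad}^0\rho}$ (the trivial summand corresponding to $\mathrm{id}_V$), so $\mathrm{ad}^0\rho$ is strictly less ramified at $x$ than $\rho$. Second, Lemma~\ref{L:pushforward} lets one pass to Frobenius antecedents, shrinking $c$. Running these against each other in an induction reduces to the case $c = 0$, where the type-specific endgame applies: a disc is simply connected (type $4$), an annulus contributes only the $m$-fold Kummer covers that it absorbs under a legitimate replacement of $D$ (type $3$), and the genus-zero residual curve satisfies $\pi_1^{\prof}(\mathbb{P}^1_{\overline{F}}) = 1$ (type $2$); the rank-one case that eventually arises is settled using $H^0(X_{L,E} \times_E F, \calO^\times) = F^\times$ (Lemma~\ref{L:global sections}, Hypothesis~\ref{H:elimination}) and the algebraic closedness of $F$. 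At every stage one is returned to the already-treated case $c = 0$.

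The step I expect to be the main obstacle is precisely the elimination of a positive break: making this induction terminate cleanly is exactly where the full strength of the $p$-adic differential-equations machinery is required --- Lemmas~\ref{L:end} and~\ref{L:pushforward}, resting on the convexity and constancy statements of Lemmas~\ref{L:convex} and~\ref{L:sup const} --- and it is also where the bookkeeping of the operation ``replacing $D$'' across the point-types $2$, $3$, $4$ has to be arranged with care.
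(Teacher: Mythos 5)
Your proposal diverges from the paper's argument at two load-bearing points, and both are genuine gaps. First, the opening reduction (splitting $\rho = \bigoplus_i \rho_i$ so that $\calE_{\rho_i} = \calE_i$ realizes the decomposition of Lemma~\ref{L:decomposition}) rests on the claim that every horizontal subbundle of $\calO_U \otimes_F V$ is of the form $\calO_U \otimes_F W$ with $W \subseteq V$. That is false here, because the connection is only $\calO_{(X_{L,E} \times_E F)}$-linear: functions pulled back from the curve direction are horizontal, so any subbundle of the pushforward of $\calO_U \otimes_F V$ to (a finite \'etale cover of an open piece of) $X_{L,E} \times_E F$ pulls back to a horizontal subbundle, and such subbundles need not be constant --- vector bundles on Fargues--Fontaine curves and their covers are far from trivial, and nothing in Hypothesis~\ref{H:elimination} forces constancy. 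So the descent of the break decomposition to a decomposition of the representation is unjustified. The paper avoids needing it: it argues by contradiction with $\rho$ irreducible (and remaining so after every replacement of $D$), and uses Lemma~\ref{L:decomposition} together with Lemma~\ref{L:zero splits} only to show that at most one of the functions (or limits) $f_i(\calE_{\rho^\vee \otimes \rho}, y)$ can vanish, whence Lemma~\ref{L:end} forces $\dim(\rho) = 1$.

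Second, your treatment of the positive-break case does not close. The proposed induction ``running Lemma~\ref{L:end} against Lemma~\ref{L:pushforward} to shrink $c$ to $0$'' has no mechanism behind it: a Frobenius antecedent of $\calE_\rho$ is not of the form $\calE_{\rho'}$ for any representation $\rho'$, and the fact that $\calE_{\rho^\vee \otimes \rho}$ has some radii larger than those of $\calE_\rho$ does not produce a terminating descent. In the paper the positive break is never eliminated; it is used to force $\dim(\rho) = 1$, so that the image is cyclic of prime order, and then the two orders are treated separately: prime-to-$p$ order gives a tame cover with all differential breaks zero, contradicting Lemma~\ref{L:zero splits}, while order $p$ contradicts Lemma~\ref{L:no abelian cover}. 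This last point is where your endgame fails outright: the Kummer-theoretic argument via $H^0(X_{L,E} \times_E F, \calO^\times) = F^\times$ and algebraic closedness of $F$ can only kill characters of order prime to $p$, and the genuinely hard case is a wildly ramified order-$p$ character of positive break. That case is exactly why \S\ref{sec:abelian} exists, and any correct proof of Lemma~\ref{L:full splits} must invoke Lemma~\ref{L:no abelian cover} (or an equivalent Artin--Schreier input), which your proposal never does.
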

\begin{proof}
Assume by way of contradiction that the conclusion fails for some $\rho$.
We may assume that $\rho$ is irreducible with nontrivial simple image $G$,
and remains so after replacing $D$ arbitrarily.

Suppose first that $G$ is abelian, and hence cyclic of prime order. 
If this order is prime to $p$, we obtain a contradiction
against Lemma~\ref{L:zero splits}; if the order is $p$, we obtain a contradiction against
Lemma~\ref{L:no abelian cover}.

Suppose next that $G$ is nonabelian.
Let $[\rho]$ be the Tannakian category generated by $\rho$ (keeping in mind that $F$ is algebraically closed of characteristic $0$). Since $\rho$ has simple image, every nontrivial irreducible $\tau \in [\rho]$ is a generator.
For each $\tau$, form the vector bundle $\calE_\tau$ as in Definition~\ref{D:rep to bundle}
and consider the functions $f_i(\calE_\tau, y)$ for $i=1,\dots,\dim(\tau)$
for $y \in D$ of type 2 with residual genus 0, or of type 3 or 4. In particular, these functions are well-defined for $y$ in the skeleton of any neighborhood of $x$ in $D$, and so it is meaningful to consider limiting behavior as $y \to x$.

For clarity, let us first treat the case where $x$ itself is not a point of type 2 with positive residual genus, so that we may even take $y = x$. We divide the argument into two parallel steps.
\begin{enumerate}
\item[(i)]
Suppose that $f_1(\calE_\tau, x) > 0$.
By Lemma~\ref{L:decomposition} and the irreducibility of $\tau$, 
$f_1(\calE_\tau, x) = \cdots = f_{\dim(\tau)}(\calE_\tau, x)$.
Moreover, since each $\tau$ generates $[\rho]$, $f_1(\calE_\tau, x)$ does not depend on $\tau$ either;
since the trivial representation occurs in $\tau^\vee \otimes \tau$ with multiplicity 1,
\[
f_i(\calE_{\tau^\vee \otimes \tau}, x) = \begin{cases} f_1(\calE_\tau,x) & i = 1,\dots,\dim(\tau)^2-1 \\
0 & i = \dim(\tau)^2. \end{cases}
\]
On the other hand, if $f_1(\calE_\tau, x) > 0$, then Lemma~\ref{L:end} implies that
$f_i(\calE_{\tau^\vee \otimes \tau}, x) = 0$ for $i = \dim(\tau)^2 - \dim(\tau) + 1$;
this forces $\dim(\tau) = 1$, contradicting our assumption that $G$ is nonabelian.

\item[(ii)]
Suppose that $f_1(\calE_\tau, x) = 0$.
By \cite[Theorem~11.3.2]{kedlaya-book}, each function $f_i(\calE_\tau, y)$ restricts to a linear function
on the skeleton of some neighborhood of $x$ in $D$.
By Lemma~\ref{L:decomposition} again, 
$f_1(\calE_\tau, y) = \cdots = f_{\dim(\tau)}(\calE_\tau, y)$ on some skeleton,
and this common value does not depend on $\tau$ either.
Applying Lemma~\ref{L:end} as in the previous paragraph yields a contradiction unless
$f_1(\calE_\tau,y)$ is identically zero on some skeleton. But in this case,
Lemma~\ref{L:zero splits} yields a final contradiction.
\end{enumerate}

To complete the proof, we must revise the last two paragraphs to cover the case where ($G$ is nonabelian and) $x$ is of type 2 with positive residual genus. 
The approach will be to replace $f_i(\calE_\tau, x)$ with $\lim_{y \to x} f_i(\calE_\tau, y)$;
by the continuity aspect of \cite[Theorem 5.3.8]{kedlaya-conv}, these limits
exist and compute subsidiary radii in the same fashion as in \S\ref{sec:rel con FF}.
In particular, Lemma~\ref{L:end} applies without issue
(in particular, the analogue of Lemma~\ref{L:sup const} holds).

If $\lim_{y \to x} f_1(\calE_\tau,x) = 0$, then step (ii) applies without change.
If $\lim_{y \to x} f_1(\calE_\tau,x) > 0$, then step (i) applies \emph{mutatis mutandis}
except for the application of Lemma~\ref{L:decomposition}; however, we may take care of this step by pushing forward along a finite morphism from $D$ to a subspace of the affine line
as in the proof of \cite[Theorem~5.3.6]{kedlaya-conv}. (This does not preserve simplicity of the image of the representation, but that is not material to Lemma~\ref{L:decomposition}.)
\end{proof}

\section{Simple connectivity and Drinfeld's lemma}

We now put everything together to prove 
Theorem~\ref{T:simple connectivity}, then deduce a form of Drinfeld's lemma for perfectoid spaces.
\begin{theorem} \label{T:simple connectivity2}
Every finite \'etale cover of $X_{L,E} \times_E F$ splits completely. In particular,
Theorem~\ref{T:simple connectivity} holds.
\end{theorem}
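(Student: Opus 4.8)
The plan is to bootstrap the base case, Lemma~\ref{L:splitting cover discrete}, up to an arbitrary coefficient field by a transfinite induction whose essential step is Lemma~\ref{L:full splits}. Call a complete algebraically closed extension $K$ of $E$ \emph{good} if every finite \'etale cover of $X_{L,E}\times_E K$ splits completely; the goal is that $F$ is good. First I would reduce to the case where $E$ has characteristic $p$: when $E$ has characteristic $0$ the field $F$ is a complete algebraically closed extension of $\QQ_p$, hence perfectoid, so each $B^I_{L,E}\widehat{\otimes}_E F$ is perfectoid with tilt $B^I_{L,\FF_q((t))}\widehat{\otimes}_{\FF_q((t))}F^\flat$ as in Remark~\ref{R:interval perfectoid} (for a topologically nilpotent unit $t\in F^\flat$), and the tilting equivalence identifies $\FEt(X_{L,E}\times_E F)$ with $\FEt(X_{L,\FF_q((t))}\times_{\FF_q((t))}F^\flat)$; so I may assume $E=\FF_q((t))$. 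Then, using Lemma~\ref{L:split on overfield} with $L'=L$, it suffices to prove $F$ good after replacing $F$ by any complete algebraically closed overfield, and passing to the spherical completion I may assume $F\cong\kappa_F((t^{\Gamma_F}))$ with $\kappa_F$ algebraically closed and $\Gamma_F$ a divisible subgroup of $\RR$ (Definition~\ref{D:spherically complete}).

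The inductive step I would isolate is this: if $K$ is a good complete algebraically closed extension of $E$ and $x$ is a point of type $2$, $3$, or $4$ (Definition~\ref{D:types of points}), of residual genus $0$ in the type $2$ case, lying on a one-dimensional smooth affinoid over $K$, then $K':=\widehat{\overline{\calH(x)}}$ is again good. To prove this I would start from a finite \'etale cover $f$ of $X_{L,E}\times_E K'$ and argue as in the proof of Lemma~\ref{L:splitting cover discrete} --- using that $X_{L,E}$ is qcqs, the $2$-colimit description of $\FEt$ over a finite affinoid cover, and the descent of finite \'etale covers along completed colimits of affinoid algebras --- to descend $f$ to a finite \'etale cover of $X_{L,E}\times_E D'$ for a connected one-dimensional smooth affinoid $D'$ over $K$ carrying a point $x'$ over $x$ of the same type (in the type $2$ case, after a finite \'etale localization, with $D'$ a strict neighborhood of $x'$, as Hypothesis~\ref{H:elimination} demands). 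The permutation representation attached to this cover is a discrete representation of $\pi_1^{\prof}(X_{L,E}\times_E D')$ to which Lemma~\ref{L:full splits} applies, Hypothesis~\ref{H:elimination} holding because $K$ is good; hence it --- and with it the cover of $X_{L,E}\times_E D'$ --- becomes trivial after shrinking $D'$, and as $f$ is a base change of the resulting cover, $f$ splits completely.

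To conclude I would run the transfinite induction. The base case $K=\widehat{\overline E}$ is good by Lemma~\ref{L:splitting cover discrete}. Goodness is preserved under completed directed unions, since $X_{L,E}$ is qcqs and finite \'etale covers descend along completed colimits of affinoid algebras, so that $\FEt(X_{L,E}\times_E\widehat{\varinjlim_i K_i})\cong 2\mbox{-}\varinjlim_i\FEt(X_{L,E}\times_E K_i)$. Iterating the operation $K\mapsto\widehat{\overline{\calH(x)}}$ --- using type $2$ points of residual genus $0$ to enlarge the transcendence degree of the residue field, type $3$ points to enlarge the rational rank of the value group, and type $4$ points (whose associated field extension is immediate and transcendental) to effect the passage to a spherical completion --- together with completed directed unions, and starting from $\widehat{\overline E}$, one reaches a spherically complete field $F'$ over $E$ whose residue field and value group contain those of $F$; then $F\hookrightarrow F'$ over $E$ by Definition~\ref{D:spherically complete}. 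By the inductive step, the base case, and closure under completed unions, $F'$ is good, hence so is $F$ by Lemma~\ref{L:split on overfield}. This settles the first assertion; applying it with $q=p$, $E=\QQ_p$, and $F$ the given complete algebraically closed overfield of $\QQ_p$, and recalling that $X_{L,\QQ_p}\times_{\QQ_p}F$ is connected (Remark~\ref{R:FF connected}), yields Theorem~\ref{T:simple connectivity}.

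The genuinely hard analysis is already in hand --- the $p$-adic differential-equation estimates of \S\ref{sec:pde}--\S\ref{sec:rel con FF}, their synthesis in Lemma~\ref{L:full splits}, and the Artin--Schreier computation of Lemma~\ref{L:no abelian cover} --- so the main obstacle in assembling the theorem is the bookkeeping connecting these to the present statement. I expect two points to need care: making precise the descent of a cover of $X_{L,E}\times_E\widehat{\overline{\calH(x)}}$ into the relative setting of Hypothesis~\ref{H:elimination}, matching $x'$ to $x$ and controlling the strict-neighborhood condition in the type $2$ case; and the transfinite construction realizing an arbitrary $F$ (or an overfield of it) from $\widehat{\overline E}$ by type $2$, $3$, and $4$ point operations and completed unions --- in particular, obtaining a spherical completion as a transfinite iteration of type $4$ operations, which is the one new structural ingredient needed here.
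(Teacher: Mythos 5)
Your overall architecture is the paper's: base case Lemma~\ref{L:splitting cover discrete}, a one-variable inductive step carried by Lemma~\ref{L:full splits} (after descending the cover to a neighborhood $D'$ of a type 2, 3, or 4 point), transfinite induction, and Lemma~\ref{L:split on overfield} to move between overfields. But your opening reduction goes in the wrong direction, and this breaks the key step. You tilt so as to assume $E = \FF_q((t))$ and then run the induction over complete algebraically closed extensions $K$ of this characteristic-$p$ field, invoking Lemma~\ref{L:full splits} with such a $K$ in the role of the coefficient field of Hypothesis~\ref{H:elimination}. However, Lemma~\ref{L:full splits} rests entirely on the $p$-adic differential equation machinery of \S\ref{sec:pde}--\S\ref{sec:rel con FF}, which is set up only when $E$ has characteristic $0$ (differential modules over completions of $F(T)$ with $F \supseteq \QQ_p$, the radius $p^{-1/(p-1)}$, Christol--Dwork, Frobenius pushforward along $T \mapsto T^p$, $\ZZ_p$-valued exponents; \S\ref{sec:rel con FF} explicitly takes $A$ a uniform Huber ring over $\QQ_p$). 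In equal characteristic $p$ this theory is not available in the form used -- for instance $\frac{d}{dT}$ kills $p$-th powers, so the differential module attached to a finite \'etale algebra cannot detect Artin--Schreier-type covers -- so your inductive step is invoked outside the range where the paper proves it. The reduction via Remark~\ref{R:interval perfectoid} must be used in the opposite sense: assume $E$ has characteristic $0$ (it is Lemma~\ref{L:no abelian cover}, not the main induction, that reduces to characteristic $p$).

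Once you work in characteristic $0$, the second half of your bookkeeping also needs to change, since the Hahn series description $F \cong \kappa_F((t^{\Gamma_F}))$ of Definition~\ref{D:spherically complete} is an equal-characteristic statement; your plan of manufacturing a spherically complete overfield $F'$ from $\widehat{\overline{E}}$ by type 2/3/4 operations and embedding $F$ into it is unavailable as stated. It is also unnecessary: run the transfinite induction inside the given $F$. Starting from the completed algebraic closure of $E$ in $F$, at a successor step pick $T \in F$ not in the current subfield $F_\alpha$; the restriction of the norm of $F$ to $F_\alpha(T)$ is multiplicative, hence corresponds to a point of the affine line over $F_\alpha$ of type 2, 3, or 4 (never type 1 or 5), and you pass to the completed algebraic closure of the completion inside $F$, splitting covers by your inductive step; at limit steps use the $2$-colimit description of $\FEt$ as in the proof of Lemma~\ref{L:splitting cover discrete}. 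This exhausts $F$ and removes the spherical-completion construction (and the appeal to Lemma~\ref{L:split on overfield} at the last stage) from the argument entirely, which is how the paper's proof proceeds.
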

\begin{proof}
By Remark~\ref{R:interval perfectoid}, we may assume that $E$ is of characteristic $0$.
By Lemma~\ref{L:splitting cover discrete}, the claim holds in case $F$ is a completed algebraic closure of $E$.
By transfinite induction, it suffices to prove that if the claim holds for $F$, then it holds for the completed algebraic closure $F'$ of the completion of $F(T)$ for some multiplicative norm.

Note that the norm determines a point $x$ of the adic affine line over $F$ of type 2, 3, or 4;
the point $x$ admits a fundamental system of \'etale neighborhoods $D$, each a smooth one-dimensional affinoid space over $F$. Thanks to the henselian property of local rings of adic spaces \cite[Lemma~2.4.17]{kedlaya-liu1} and the fact that $X_{L,E}$ is qcqs,
any finite \'etale cover of $X_{L,E} \times_E F'$ can be spread out to $X_{L,E} \times_E D$ for some $D$.
We may thus apply Lemma~\ref{L:full splits} to deduce that the cover can be split by replacing $D$ with another \'etale neighborhood $D'$; this implies the desired conclusion.
\end{proof}

\begin{defn}
Let $\Pfd$ denote the category of perfectoid spaces in characteristic $p$.
\end{defn}

We must take note of a slight difference in behavior between absolute products of schemes and of perfectoid spaces.
This expands upon Remark~\ref{R:diamond interpretation}.

\begin{lemma} \label{L:Pfd products}
Let $X_1,\dots,X_n \in \Pfd$ be arbitrary.
\begin{enumerate}
\item[(a)]
The (absolute) product $X := X_1 \times \cdots \times X_n$ exists in $\Pfd$.
\item[(b)]
Write $\varphi_i$ as shorthand for $\varphi_{X_i}$, the automorphism of $X$ induced by the absolute ($p$-power) Frobenius on $X_i$. Then for any $i \in \{1,\dots,n\}$, the quotient 
\[
X/\Phi := X/\langle \varphi_1,\dots, \widehat{\varphi_i}, \dots,\varphi_n \rangle
\]
exists in $\Pfd$. (Note that it is canonically independent of the choice of $i$.)
\item[(c)]
If $X_1,\dots,X_n$ are qcqs, then $X/\Phi$ is qcqs (but $X$ need not be).
\end{enumerate}
\end{lemma}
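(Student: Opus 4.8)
The plan is to reduce to affinoid factors, construct the absolute product explicitly as an open subspace of a single adic spectrum in the spirit of Remark~\ref{R:diamond interpretation}, and then obtain the quotient in (b) from an explicit quasicompact fundamental domain.

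For (a), since forming products commutes with open covers, I may assume each $X_i = \Spa(R_i, R_i^+)$ is affinoid perfectoid (the product will generally not be affinoid, but that is harmless here). Working in characteristic $p$, every $R_i$ is perfect, hence so is $R_1^+ \otimes_{\FF_p} \cdots \otimes_{\FF_p} R_n^+$; fixing pseudouniformizers $\varpi_i \in R_i^+$, let $R^+$ be the integral closure in $R := (R_1^+ \otimes_{\FF_p} \cdots \otimes_{\FF_p} R_n^+)[(\varpi_1 \cdots \varpi_n)^{-1}]$ of the $(\varpi_1, \dots, \varpi_n)$-adic completion of $R_1^+ \otimes_{\FF_p} \cdots \otimes_{\FF_p} R_n^+$, equipped with the $(\varpi_1, \dots, \varpi_n)$-adic topology. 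Then $\Spa(R, R^+)$ is an analytic adic space, and I claim its open subspace $\widetilde{X} := \{v : v(\varpi_i) < 1 \text{ for all } i\}$ is a perfectoid space representing $T \mapsto \prod_{i=1}^n \Hom(T, X_i)$ on $\Pfd$. That $\widetilde{X}$ is perfectoid follows by covering it by the rational subsets on which all the ratios $\log v(\varpi_i)/\log v(\varpi_j)$ are bounded: each such subset is the adic spectrum of a perfect uniform Tate ring, built exactly like the rings $B^I_{L,E}$ of Definition~\ref{D:norms}, hence perfectoid (cf.\ \cite{kedlaya-liu1}). The universal property is immediate: a morphism $\Spa(S, S^+) \to \widetilde{X}$ is a morphism of pairs $(R, R^+) \to (S, S^+)$ carrying each $\varpi_i$ to a topologically nilpotent unit of $S$, which is precisely a compatible family of morphisms $(R_i, R_i^+) \to (S, S^+)$, i.e.\ of morphisms $\Spa(S, S^+) \to X_i$. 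Gluing over affinoid covers of the $X_i$ then produces the product in general; alternatively one may form the product among small v-sheaves (where all products exist) and deduce representability against the same local model, cf.\ \cite{s-berkeley}.

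For (b), fix $i$ and put $\Gamma_i := \langle \varphi_1, \dots, \widehat{\varphi_i}, \dots, \varphi_n \rangle$; since the $\varphi_j$ commute and act on $\left| \widetilde{X} \right|$ through independent rescalings, $\Gamma_i \cong \ZZ^{n-1}$. In the model above, $\varphi_j$ is induced by the $p$-power map on the $j$-th tensor factor; on $\left| \widetilde{X} \right|$ it rescales the coordinate $\log v(\varpi_j)$ by $p$ and fixes the others. Hence $\Gamma_i$ acts freely and properly discontinuously on $\widetilde{X}$, and a fundamental domain is cut out by requiring, for each $j \neq i$, that $\log v(\varpi_j)/\log v(\varpi_i)$ lie in a fixed fundamental interval for multiplication by $p$. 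Its closure $\overline{W}$ is a rational subset of $\Spa(R, R^+)$ --- a ``polyannulus'' --- hence affinoid perfectoid, and each $\varphi_j$ (for $j \neq i$) identifies the two faces of $\overline{W}$ transverse to the $j$-th ratio; gluing $\overline{W}$ to itself along these faces exhibits $\widetilde{X}/\Gamma_i$ in $\Pfd$, and reassembling over affinoid covers gives $X/\Gamma_i \in \Pfd$. The identifications for varying $i$ are then obtained, exactly as in the schematic case (Definition~\ref{D:Phi quotient}), from the relation $\varphi_1 \cdots \varphi_n = \varphi_X$ together with the canonical trivialization of the absolute Frobenius pullback; this gives (b).

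For (c), when the $X_i$ are qcqs the ratio directions of $\overline{W}$ are compact and the remaining directions contribute only finitely many affinoid charts, so $\overline{W}$ --- and hence $X/\Phi$, glued from finitely many translates of $\overline{W}$ --- is qcqs; whereas for $n \geq 2$ the space $X$ is a strictly increasing union of its ``ratios bounded by $N$'' rational subsets with no largest member, just as $Y_{L,E} = \bigcup_I \Spa(B^I_{L,E})$ has no cofinal interval $I$, so $X$ is not quasicompact. I expect the crux to be (b): a quotient of a perfectoid space by an infinite group of automorphisms need not exist in $\Pfd$ (it already fails for the single automorphism $\varphi_X$), so everything rides on producing a genuinely good fundamental domain and on carefully tracking its faces and the Frobenius-twist bookkeeping behind the independence of $i$.
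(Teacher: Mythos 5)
Your proposal is correct and follows essentially the same route as the paper: reduce to affinoid perfectoid factors, realize the product as the open locus $\{v:\ v(\varpi_1),\dots,v(\varpi_n)<1\}$ inside $\Spa$ of the $(\varpi_1,\dots,\varpi_n)$-adically completed tensor product (perfectoid via $B^I$-style rational charts bounding the ratios), observe that $\Gamma_i$ acts properly discontinuously through rescaling of the $\log v(\varpi_j)$, and get quasicompactness of $X/\Phi$ from finitely many such charts while $X$ itself fails to be quasicompact. The one cosmetic caveat: ``gluing $\overline{W}$ to itself along its faces'' is not literally an adic-space gluing (faces are not open); the paper's rigorous packaging of your properly-discontinuous-action argument is to cover the quotient by open polyannulus neighborhoods, with ratio bounds $a_j \leq b_j < p a_j$, whose $\Phi$-translates are pairwise disjoint.
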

\begin{proof}
In all three cases, it suffices to treat the case where $X_i = \Spa(A_i, A_i^+)$ is an affinoid perfectoid space
for $i=1,\dots,n$. 
(Here we use the convention that perfectoid rings are Tate, as per \cite{kedlaya-liu1, kedlaya-liu2},
and not merely analytic, as per \cite{kedlaya-aws}; this does not affect the definition of $\Pfd$.)
Let $\varpi_i \in A_i$ be a topologically nilpotent unit, let $A^+$ be the completion of $A_1^+ \otimes_{\FF_p} \cdots \otimes_{\FF_p} A_n^+$ for the
$(\varpi_1, \dots,\varpi_n)$-adic topology, and put $A := A^+[\varpi_1^{-1} \cdots \varpi_n^{-1}]$.
Then $(A,A^+)$ is a Huber pair, and taking
\[
X := \{x \in \Spa(A,A^+): \left|\varpi_1(x)\right|, \dots, \left|\varpi_n(x)\right| < 1 \}
\]
yields an absolute product of $X_1,\dots,X_n$; this proves (a). (Note that $X$ is not in general quasicompact.)

The action of $\Phi$ on $X$ (defined using $i=1$ for the sake of fixing notation)
is totally discontinuous: any $x \in X$ admits a neighborhood of the form
\[
\{x \in \Spa(A,A^+): \left| \varpi_1(x) \right|^{b_j} \leq \left|\varpi_j(x)\right| \leq \left| \varpi_1(x) \right|^{a_j} \quad (j=2,\dots,n)\}
\]
for some $a_j,b_j \in \QQ_{>0}$ with $a_j \leq b_j < pa_j$, and the $\Phi$-translates of such a neighborhood are
pairwise disjoint. Hence $X/\Phi$ exists in $\Pfd$, proving (b).
Moreover, it is clear that $X/\Phi$ is covered by finitely many such neighborhoods, and that the intersections
of these are again of the same form; it follows that $X/\Phi$ is quasicompact and separated (and in particular qcqs), proving (c).
\end{proof}

\begin{remark} \label{R:Pfd product redux}
Remark~\ref{R:diamond interpretation} amounts to the assertion that for $E$ of characteristic $p$,
\[
X_{L,E} \times_E F \cong (\Spa(L,L^\circ) \times \Spa(F,F^\circ))/\Phi.
\]
Consequently, Theorem~\ref{T:simple connectivity} implies that $\pi_1^{\prof}((\Spa(L,L^\circ) \times \Spa(F,F^\circ))/\Phi, \overline{x})$ is trivial (for any choice of the basepoint $\overline{x}$).
\end{remark}

As an intermediate step towards Theorem~\ref{T:drinfeld lemma perfectoid}, we establish the following analogue of 
Lemma~\ref{L:drinfeld schemes point}.

\begin{lemma} \label{L:extend by geometric point}
For $X \in \Pfd$, put $X_L := X \times \Spa(L,L^\circ)$ and let $\varphi_L: X_L \to X_L$ be the pullback of the Frobenius on $L$. Then the base extension functor
\[
\FEt(X) \to \FEt(X_L/\varphi_L)
\]
is an equivalence of categories. 
\end{lemma}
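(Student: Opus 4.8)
The plan is to reduce the statement to the simple connectivity of Fargues--Fontaine curves established in Theorem~\ref{T:simple connectivity2}, by a descent/localization argument in the style of Lemma~\ref{L:drinfeld schemes point}, but carried out analytically. First I would reduce to the case where $X = \Spa(A, A^+)$ is affinoid perfectoid; this is legitimate because both $\FEt(X)$ and $\FEt(X_L/\varphi_L)$ are built from a stack (the stack-theoretic quotient of $X_L$ by $\varphi_L$) by a sheaf-theoretic construction, so an affinoid cover of $X$ lets one glue, using that finite \'etale covers descend along the v-topology (or analytic topology). So assume $X$ is affinoid perfectoid. Passing to tilts if necessary via Remark~\ref{R:interval perfectoid}, one may also assume we are in characteristic $p$, so $A$ is a perfect $\FF_p$-Tate ring; then $X_L = \Spa(A,A^+) \times \Spa(L,L^\circ)$ is the product computed in Lemma~\ref{L:Pfd products}.

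The heart of the argument is full faithfulness plus essential surjectivity of the base extension functor. For \textbf{full faithfulness}: $\varphi_L$ acts freely and totally discontinuously on $X_L$, and a $\varphi_L$-equivariant finite \'etale cover of $X_L$ is determined by its underlying cover together with descent data; Hom-sets in $\FEt(X_L/\varphi_L)$ are the $\varphi_L$-invariants of Hom-sets in $\FEt(X_L)$, and these compute back to $\FEt(X)$ because $H^0(X_L, \calO) = H^0(X, \calO)$ (the global sections over the absolute product with a point do not grow — here one uses that $L$ is a field and a Schauder-basis argument as in Lemma~\ref{L:global sections} and Lemma~\ref{L:split on overfield}), so an idempotent in the structure sheaf of a cover over $X_L$ that is $\varphi_L$-invariant already lives over $X$. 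For \textbf{essential surjectivity}, which is the main obstacle: given $Y \to X_L$ finite \'etale with $\varphi_L$-descent datum, I want to produce a finite \'etale cover of $X$ splitting it. The key point is that over each geometric point $\Spa(K, K^+) \to X$ with $K$ algebraically closed of characteristic $p$, the fiber $Y_K \to \Spa(K,K^+) \times \Spa(L,L^\circ)$, modulo $\varphi_L$, is a finite \'etale cover of $(\Spa(K,K^\circ) \times \Spa(L,L^\circ))/\Phi$. By Remark~\ref{R:Pfd product redux} this stack is $X_{L',E} \times_E K$ for suitable choices (taking $E = \FF_q((t))$), whose finite \'etale covers are all split by Theorem~\ref{T:simple connectivity2} — after possibly enlarging $L$, which is harmless by Lemma~\ref{L:split on overfield}. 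Hence $Y_K/\varphi_L$ is a split cover over each geometric point of $X$, i.e.\ $(Y/\varphi_L) \to X$ is ``fiberwise split.''

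From fiberwise splitting I would conclude by a spreading-out argument: a finite \'etale cover of (a quotient involving) $X_L$ whose fiber over every geometric point of $X$ is trivial is, locally on $X$ in the analytic topology, the pullback of a finite \'etale cover of $X$ — this is where one uses that $X$ is qcqs only up to a cover and that finite \'etale covers form a stack for the relevant topology, so one can glue the local descents into a genuine object of $\FEt(X)$ whose base extension is $Y$. Concretely, the sections $H^0(Y_U/\varphi_L, \calO)$ for $U \subseteq X$ affinoid form, by the $H^0$-invariance above, a finite projective $\calO(U)$-algebra which is finite \'etale because it is so fiberwise over every geometric point, and its $\Spa$ is the desired cover of $U$; these glue. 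The delicate verification here — which I expect to be the real work — is the compatibility of $H^0$ with base change along $\calO(U) \to \calH(z)$ for geometric points $z$, i.e.\ that no global sections are lost or gained; this is exactly the perfectoid analogue of the flatness bookkeeping in Lemma~\ref{L:drinfeld schemes pro-p} and Lemma~\ref{L:drinfeld schemes point}, and is handled by writing everything in terms of the explicit product ring $A^+ \widehat{\otimes} \frako_L$ of Lemma~\ref{L:Pfd products} and a Schauder/completed-tensor-product argument as in Lemma~\ref{L:split on overfield}. Combining full faithfulness with this essential surjectivity gives the claimed equivalence.
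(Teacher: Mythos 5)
Your key nonformal input is the right one: over a geometric point $\Spa(C,C^+) \to X$ the quotient $(\Spa(C,C^+)\times\Spa(L,L^\circ))/\varphi_L$ is a Fargues--Fontaine curve by Remark~\ref{R:Pfd product redux}, and Theorem~\ref{T:simple connectivity2} splits all of its finite \'etale covers; this is also the only nonformal ingredient in the paper's proof. The gap is in your globalization. Your ``concrete'' essential-surjectivity step rests on the claim that $H^0(Y_U/\varphi_L,\calO)$ is a finite projective $\calO(U)$-algebra whose formation commutes with base change to $\calH(z)$ for geometric points $z$, so that ``finite \'etale because it is so fiberwise'' applies. Neither assertion is routine: finiteness of the $\varphi_L$-invariant pushforward over $\calO(U)$ is a relative finiteness statement of the same nature as coherence of pushforwards along relative Fargues--Fontaine curves, not Schauder-basis bookkeeping, and without finiteness, flatness and base-change compatibility already in hand, fiberwise \'etaleness gives nothing. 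You flag this as ``the real work'' but do not supply it, and it does not follow from anything in the paper. Two further misstatements compound this: $H^0(X_L,\calO)=H^0(X,\calO)$ is false as written (the left side contains $L$) --- what you need is the $\varphi_L$-invariants, and even that is a relative version of Lemma~\ref{L:global sections} requiring proof; and fiberwise splitting at geometric points spreads out only to \'etale (pro-\'etale) neighborhoods of $x$ in $X$, not analytic-local ones as you assert, because the splitting uses that the residue field at the geometric point is algebraically closed.

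The paper avoids all of this by a purely formal descent: both $X \mapsto \FEt(X)$ and $X \mapsto \FEt(X_L/\varphi_L)$ are stacks for a suitable topology on $\Pfd$, so the equivalence may be checked after pulling back to a cover of $X$ by geometric points, and the point case is exactly Theorem~\ref{T:simple connectivity} via Remark~\ref{R:Pfd product redux}. If you want to keep your outline, replace the $H^0$-finiteness route by this mechanism: check full faithfulness over points by descent, spread the fiberwise splitting out to \'etale neighborhoods by a limit argument, and glue the resulting local descents using the stack property together with full faithfulness; no finiteness of invariant global sections is then needed.
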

\begin{proof}
Since the functors $X \mapsto \FEt(X)$ and $X \mapsto \FEt(X_L/\varphi_L)$ on $\Pfd$ are both stacks for the \'etale topology, it suffices to check the claim when $X$ is a geometric point. 
This case is a consequence of Theorem~\ref{T:simple connectivity}, as described in
Remark~\ref{R:Pfd product redux}.
\end{proof}
\begin{cor} \label{C:extend by geometric point}
For $X \in \Pfd$ connected and qcqs, for any geometric point $\overline{x}$ of $X_L$,
the map $\pi_1^{\prof}(X_L/\varphi_L, \overline{x}) \to \pi_1^{\prof}(X, \overline{x})$
is a homeomorphism.
\end{cor}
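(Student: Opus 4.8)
The plan is to deduce Corollary~\ref{C:extend by geometric point} from Lemma~\ref{L:extend by geometric point} by a direct application of Lemma~\ref{L:same pi1}. First I would take $f$ to be the projection $X_L \to X$ (or rather, the composite $X_L \to X$, noting that this factors through the quotient $X_L/\varphi_L$, so we really apply the lemma to the induced map $X_L/\varphi_L \to X$). The content of Lemma~\ref{L:extend by geometric point} is exactly that the base-change functor $\FEt(X) \to \FEt(X_L/\varphi_L)$ is an equivalence of categories, which is precisely the hypothesis required by Lemma~\ref{L:same pi1}. Given that hypothesis, Lemma~\ref{L:same pi1}(b) yields directly that $X_L/\varphi_L$ is connected (since $X$ is connected) and that for any geometric point $\overline{x}$ of $X_L/\varphi_L$ the induced map $\pi_1^{\prof}(X_L/\varphi_L, \overline{x}) \to \pi_1^{\prof}(X, \overline{x})$ is a homeomorphism. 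A geometric point of $X_L$ maps to a geometric point of $X_L/\varphi_L$, so this is exactly the assertion we want.

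The one point requiring care is the \emph{qcqs hypothesis} in Lemma~\ref{L:same pi1}: we need both $X$ and $X_L/\varphi_L$ to be qcqs. We are given that $X$ is connected and qcqs. For $X_L/\varphi_L$, I would invoke Lemma~\ref{L:Pfd products}: taking $n = 2$, $X_1 = X$, $X_2 = \Spa(L, L^\circ)$ (both qcqs, the latter being an affinoid perfectoid space), part (c) of that lemma tells us that $X_L/\Phi = X_L/\varphi_L$ is qcqs. (Here $\Phi$ for a two-fold product quotients by the Frobenius on one factor only, which in our indexing is $\varphi_L$; this matches the notation of Lemma~\ref{L:extend by geometric point}.) So both spaces are qcqs and the hypotheses of Lemma~\ref{L:same pi1} are met.

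I do not anticipate any genuine obstacle here — this is a formal packaging step. The only thing one must be slightly attentive to is matching conventions: that the map in Lemma~\ref{L:same pi1} goes from $\pi_1$ of the source to $\pi_1$ of the target (so from $X_L/\varphi_L$ to $X$, as stated), that ``homeomorphism'' of profinite groups here is the same as ``isomorphism of topological groups,'' and that the base change functor $\FEt(X) \to \FEt(X_L/\varphi_L)$ of Lemma~\ref{L:extend by geometric point} is the pullback functor that Lemma~\ref{L:same pi1} calls $\FEt(X) \to \FEt(Y)$ with $Y = X_L/\varphi_L$. Once these identifications are made, the proof is a single sentence.

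\begin{proof}
By Lemma~\ref{L:extend by geometric point}, the base change functor $\FEt(X) \to \FEt(X_L/\varphi_L)$ is an equivalence of categories. Since $X$ is connected and qcqs, and since $X_L/\varphi_L$ is qcqs by Lemma~\ref{L:Pfd products}(c) (applied with the two factors $X$ and $\Spa(L,L^\circ)$), we may apply Lemma~\ref{L:same pi1}(b) to the projection $X_L/\varphi_L \to X$ to conclude that $X_L/\varphi_L$ is connected and that for any geometric point $\overline{x}$ of $X_L$ the induced map $\pi_1^{\prof}(X_L/\varphi_L, \overline{x}) \to \pi_1^{\prof}(X, \overline{x})$ is a homeomorphism.
\end{proof}
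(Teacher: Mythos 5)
Your proof is correct and is essentially the paper's own argument: the paper likewise deduces the corollary by combining Lemma~\ref{L:same pi1} with Lemma~\ref{L:extend by geometric point}. Your explicit verification that $X_L/\varphi_L$ is qcqs via Lemma~\ref{L:Pfd products}(c) is exactly the point the qcqs hypothesis on $X$ is there for, so nothing is missing.
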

\begin{proof}
This follows by combining Lemma~\ref{L:same pi1} with Lemma~\ref{L:extend by geometric point}.
\end{proof}

\begin{theorem} [Drinfeld's lemma for perfectoid spaces] \label{T:drinfeld lemma perfectoid}
Let $X_1,\dots,X_n \in \Pfd$ be connected and qcqs.
\begin{enumerate}
\item[(a)]
For $X := X_1 \times \cdots \times X_n$, the quotient $X/\Phi$ (which exists in $\Pfd$ by Lemma~\ref{L:Pfd products})
is connected.
\item[(b)]
For any geometric point $\overline{x}$ of $X$, the map
\[
\pi_1^{\prof}(X/\Phi, \overline{x}) \to \prod_{i=1}^n \pi_1^{\prof}(X_i, \overline{x})
\]
is an isomorphism of profinite groups.
\end{enumerate}
\end{theorem}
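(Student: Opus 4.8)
The plan is to establish the case $n = 2$ directly and to reduce the general case to it by induction on $n$ (the case $n \le 1$ being trivial). Throughout I will use the elementary fact that if $V$ is a perfectoid space over $\FF_p$ and $g, h$ are automorphisms of $V$ with $gh = \varphi_V$, the absolute Frobenius of $V$, then $\FEt(V/\langle g \rangle) \cong \FEt(V/\langle h \rangle)$: the absolute Frobenius acts canonically trivially on $\FEt(V)$, so a $g$-equivariant structure on a finite \'etale cover is the same datum as an $h$-equivariant one. For the inductive step I would set $Y := X_1 \times \cdots \times X_{n-1}$; since $\varphi_2, \dots, \varphi_{n-1}$ act only on the factor $Y$, the construction in Lemma~\ref{L:Pfd products} gives $X/\langle \varphi_2, \dots, \varphi_{n-1} \rangle \cong (Y/\Phi) \times X_n$, and by the inductive hypothesis applied to $X_1, \dots, X_{n-1}$ the space $Z := Y/\Phi$ is connected and qcqs with $\pi_1^{\prof}(Z) \cong \prod_{i=1}^{n-1} \pi_1^{\prof}(X_i)$. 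Since $X/\Phi \cong ((Y/\Phi) \times X_n)/\varphi_{X_n}$, this reduces everything to the following assertion: for $Z, W \in \Pfd$ connected and qcqs, $(Z \times W)/\varphi_W$ is connected and $\pi_1^{\prof}((Z \times W)/\varphi_W) \cong \pi_1^{\prof}(Z) \times \pi_1^{\prof}(W)$.

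For connectedness, I would argue that a $\varphi_W$-stable clopen $S$ of $Z \times W$ restricts, over a geometric point $\overline{z}$ of $Z$, to a $\varphi_W$-stable clopen of $W \times \overline z = W_{\calH(\overline z)}$; by the displayed fact (with $g = \varphi_W$ and $h = \varphi_{\overline z}$, whose product is the absolute Frobenius of $W \times \overline z$) this is a clopen of $(W_{\calH(\overline z)})/\varphi_{\overline z}$, hence by Lemma~\ref{L:extend by geometric point} (taking $X = W$ and $L = \calH(\overline z)$) and Lemma~\ref{L:same pi1}(a) corresponds to a clopen of $W$, which is trivial because $W$ is connected. Thus $S$ is a union of fibers of the open surjection $\mathrm{pr}_1 \colon Z \times W \to Z$ and so descends to a clopen of the connected space $Z$, whence $S$ is trivial. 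Applying the same argument to connected finite \'etale covers $V \to Z$ and $V' \to W$ shows, in addition, that $(V \times V')/\varphi_{V'}$ is connected.

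For the $\pi_1$ statement I would let $q \colon (Z \times W)/\varphi_W \to Z$ and $p \colon (Z \times W)/\varphi_W \to W/\varphi_W$ be the maps induced by the two projections (recalling $\pi_1^{\prof}(W/\varphi_W) \cong \pi_1^{\prof}(W)$) and show that $(q_*, p_*)$ is an isomorphism. Surjectivity is equivalent to the connectedness of $q^* V \times p^* V'$ for connected Galois $V \in \FEt(Z)$, $V' \in \FEt(W)$; unwinding the definitions this cover is $(V \times V')/\varphi_{V'}$, which is connected by the previous step. For injectivity I would take a connected $\varphi_W$-equivariant finite \'etale cover $Y$ of $Z \times W$; restricting to a geometric point $\overline z$ of $Z$ and using the displayed fact together with Lemma~\ref{L:extend by geometric point}, the restriction $Y|_{W \times \overline z}$ comes from some $V' \in \FEt(W)$, so $Y|_{W \times \overline z} \cong \mathrm{pr}_2^* V'|_{W \times \overline z}$ compatibly with Frobenius structures. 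Since $\overline z$ is a cofiltered limit of rational neighbourhoods inside finite \'etale covers of $Z$, and $\FEt$ commutes with cofiltered limits of qcqs perfectoid spaces, this isomorphism already holds over $U \times W$ for some \'etale neighbourhood $U \to Z$ of $\overline z$; quasicompactness of $Z$ lets one assemble finitely many such $U$ into an \'etale cover $\widetilde Z \to Z$, and connectedness of $Z$ lets one take a single $V'$. The discrepancy of the two resulting descent data on $(\widetilde Z \times_Z \widetilde Z) \times W$ is an automorphism of $\mathrm{pr}_2^* V'$, hence (for $V'$ Galois) a $1$-cocycle on $\widetilde Z/Z$ valued in a finite group; this classifies a cover $V'' \in \FEt(Z)$ untwisting $Y$, so that $Y$ is split by $q^* V'' \times p^* V'$, giving injectivity and completing the $n = 2$ case.

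The hard part will be the injectivity step for $n = 2$: extending the isomorphism $Y|_{W \times \overline z} \cong \mathrm{pr}_2^* V'|_{W \times \overline z}$ off the fiber $W \times \overline z$ (which is not quasicompact over $Z$) to an \'etale neighbourhood — this is precisely what forces the presentation of geometric points as cofiltered limits, and the continuity of $\FEt$ along such limits of qcqs objects, into the argument — followed by the bookkeeping needed to see that the resulting descent obstruction is represented by a genuine object of $\FEt(Z)$ rather than something larger.
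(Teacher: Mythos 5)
Your reduction to $n=2$ and your surjectivity argument coincide with the paper's, and your connectedness argument begins the same way (use Lemma~\ref{L:extend by geometric point} together with Lemma~\ref{L:same pi1} to see that each geometric fiber over $Z$ lies entirely in $S$ or in its complement). The gap is in the final descent step: ``$S$ is a union of fibers of the \emph{open} surjection $\mathrm{pr}_1\colon Z\times W\to Z$, so descends to a clopen of $Z$.'' Openness of this projection is asserted, not proved, and it is not an innocuous point: $\mathrm{pr}_1$ is not of finite type, no Chevalley-type constructibility is available, and the absolute product is not even quasicompact (Lemma~\ref{L:Pfd products}(c)), so openness is not obviously true and is not established anywhere in the paper. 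The paper's proof is designed to avoid exactly this: it shows that the images in $\left|Z\right|$ of $S$ and of its complement are \emph{closed}, by writing the fiber over a point as the inverse limit of the preimages of quasicompact open neighborhoods and using that an inverse limit of nonempty spectral spaces along spectral maps is nonempty (constructible topology plus Tikhonov, following \cite[Proposition~16.3.6]{s-berkeley} and \cite[Corollary~4.1.22]{kedlaya-aws}); the two images are disjoint closed sets covering $\left|Z\right|$, hence clopen. You need either this argument or an actual proof of openness to close the gap.

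For (b), your injectivity step takes a genuinely different route from the paper, which never spreads out or descends: given a cover of $X/\Phi$, the paper invokes \cite[Proposition~16.3.3]{s-berkeley} to extract the maximal subcover pulled back from $X_2$, yielding exactness of $\pi_1^{\prof}(X\times_{X_2}\overline{s})\to\pi_1^{\prof}(X/\Phi)\to\pi_1^{\prof}(X_2)\to 1$ with the first group identified with $\pi_1^{\prof}(X_1)$ by Lemma~\ref{L:extend by geometric point}; this is shorter and avoids all descent bookkeeping. Your spreading-out-plus-untwisting strategy is plausible, but as written its key limit step misfires: the spaces $W\times U$ over \'etale neighborhoods $U$ of $\overline{z}$ are \emph{not} qcqs, so ``$\FEt$ commutes with cofiltered limits of qcqs perfectoid spaces'' does not apply to them; you must pass to the quotients $(W\times U)/\varphi_W$, which are qcqs perfectoid by Lemma~\ref{L:Pfd products}, and run the limit/colimit argument for $\FEt$ there (this is precisely where the $\varphi_W$-equivariant structure you carry along becomes essential). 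Two further points need a word in your cocycle step: one should first replace $V'$ by a Galois closure, since the restriction of a connected $Y$ to a fiber need not correspond to a Galois object of $\FEt(W)$; and the identification of $\varphi_W$-equivariant automorphisms of $\mathrm{pr}_2^*V'$ over $T\times W$ with locally constant $\mathrm{Aut}_W(V')$-valued functions on $T$ itself uses part (a) applied to the connected components of $T$. With those repairs your argument should go through, but the paper's exact-sequence argument buys the same conclusion with far less machinery.
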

\begin{proof}
Put $X' := (X_1 \times \cdots \times X_{n-1})/\langle \varphi_2,\dots,\varphi_{n-1} \rangle$; then
\[
X/\Phi \cong (X' \times X_n)/\varphi_n,
\]
so by induction on $n$ we may deduce both claims from the case $n=2$.
(Note that this depends on the fact that by Lemma~\ref{L:Pfd products}, $X'$ exists in $\Pfd$ and is qcqs;
otherwise, we would have to formulate the induction hypothesis
in terms of a larger category than $\Pfd$.) We thus assume $n=2$ hereafter.

To check (a), we argue as in \cite[Corollary~4.1.22]{kedlaya-aws}
(which in turn is based on \cite[Proposition~16.3.6]{s-berkeley}).
Suppose by way of contradiction that $X$ admits a nontrivial $\varphi_2$-invariant disconnection $U_1 \sqcup U_2$.
For each geometric point $\overline{s}$ of $X_2$, 
$U_1 \times_{X_2} \overline{s}, U_2 \times_{X_2} \overline{s}$ form a $\varphi_2$-invariant
disconnection of $X_1 \times_{X_2} \overline{s}$; 
Lemma~\ref{L:same pi1} and Lemma~\ref{L:extend by geometric point} imply that this disconnection is the pullback of
a disconnection of $X_1$, and so one of the terms must be empty.
For $s = \Spa(K,K^+) \to X$ with $K$ a nonarchimedean field, we may write 
$U_1\times_{X_2} s$ as the inverse limit $\varprojlim_V X_1 \times_{X_2} V$ for $V$ running over quasicompact
open neighborhoods of $s$ in $X_2$; at the level of topological spaces, we have an inverse limit of spectral spaces and spectral morphisms, which can only be empty it if is empty at some term.
(For the constructible topologies, this is an inverse limit of compact Hausdorff spaces, which by Tikhonov's theorem cannot be empty if none of the terms is empty.)
It follows that the images of the maps $U_1 \to X_2$ and $U_2 \to X_2$ form a nontrivial disconnection of $X_2$,
yielding a contradiction.

To check (b), we continue as in \cite[Corollary~4.1.22, Corollary~4.1.23]{kedlaya-aws}.
In the diagram
\[
\xymatrix{
\pi_1^{\prof}(X \times_{X_2} \overline{s}, \overline{x}) 
\ar@{=}[dr] \ar[r] &  \pi_1^{\prof}(X/\Phi, \overline{x}) \ar[d]\ar[r]  & \pi_1^{\prof}(X_2, \overline{x}) \ar[r] & 1 \\
& \pi_1^{\prof}(X_1, \overline{x})
}
\]
the diagonal is an isomorphism by Lemma~\ref{L:same pi1} and Lemma~\ref{L:extend by geometric point} again.
For $X'_2 \in \FEt(X_2)$ connected, the previous paragraph implies that $(X_1 \times X'_2)/\Phi$ is connected;
as in \cite[Remark~4.1.10]{kedlaya-aws}, it follows that $\pi_1^{\prof}(X/\Phi, \overline{x}) \to \pi_1^{\prof}(X_2, \overline{x})$ is surjective. By the same token, $\pi_1^{\prof}(X/\Phi, \overline{x}) \to \pi_1^{\prof}(X_1, \overline{x})$ is surjective.

Let $G$ be a finite quotient of $\pi_1^{\prof}(X/\Phi, \overline{x})$ corresponding to $X' \in \FEt(X/\Phi)$.
Let $G \to H$ be the quotient corresponding to a Galois cover
$X_2' \to X_2$ as produced by  \cite[Proposition~16.3.3]{s-berkeley}
(the uniqueness property of that result implies the Galois property of the cover). 
Since $X' \to X_2'$ has geometrically connected fibers, the map
$\pi_1^{\prof}(X \times_{X_2} \overline{s}, \overline{x}) \to \ker(G \to H)$
must be surjective. This completes the proof of exactness.
\end{proof}

\section{Drinfeld's lemma in the analytic setting}
\label{sec:drinfeld diamonds}

To conclude, we reinterpret Theorem~\ref{T:simple connectivity} in Scholze's language of diamonds and v-sheaves \cite{s-berkeley, scholze-diamonds} and give the formulation of Drinfeld's lemma for v-sheaves described in
\cite[\S 4.3]{kedlaya-aws}, which see for more details. We conclude with a remark to the effect that Drinfeld's lemma for v-sheaves includes the corresponding statement for schemes (Remark~\ref{R:v-sheaves to schemes}).

\begin{defn}
Equip $\Pfd$ with the \emph{v-topology}, generated by open covers and arbitrary quasicompact surjective morphisms
\cite[Definition~17.1.1]{s-berkeley}. This topology is subcanonical, that is, representable presheaves
are sheaves \cite[Corollary~17.1.5]{s-berkeley}. A v-sheaf is \emph{small} if it admits a surjective morphism
from some perfectoid space.

For $\calF$ a small v-sheaf, we may define the \emph{underlying topological space} $\left| \calF \right|$ of
$\calF$ as follows: write $\calF$ as the quotient of a perfectoid space $X$ by an equivalence relation $R \subseteq X \times X$ using \cite[Proposition~17.2.2]{s-berkeley}, then set $\left| \calF \right| := \left| X \right| / \left| R \right|$. 
This does not depend on the choice of $X$ \cite[Proposition~12.7]{scholze-diamonds}.
We say $\calF$ is \emph{spatial} if it is qcqs and admits a neighborhood basis consisting of the underlying topological spaces of quasicompact open subobjects.

We use the term \emph{diamond} in the sense of \cite[Definition~8.3.1]{s-berkeley}, except
that we assume that all diamonds are locally spatial.
That is, diamonds are small v-sheaves (see \cite[Proposition~17.1.6]{s-berkeley})
which are locally the quotient of a perfectoid space by a pro-\'etale equivalence relation
(in the sense of \cite[\S 8.2]{s-berkeley}).
We write $X^\diamond$ for the diamond associated to the analytic adic space $X$ over $E$
\cite[Theorem~10.1.5]{s-berkeley}. 
For $A$ a Huber ring over $E$, we write $\Spd(A)$ as shorthand for $\Spa(A,A^\circ)^\diamond$.
\end{defn}

\begin{lemma} \label{L:Pfd products v-sheaves}
Let $X_1,\dots,X_n$ be locally spatial small v-sheaves (resp.\ diamonds)
\begin{enumerate}
\item[(a)]
The (absolute) product $X := X_1 \times \cdots \times X_n$ is a locally spatial small v-sheaf (resp.\ a diamond).
\item[(b)]
Write $\varphi_i$ as shorthand for $\varphi_{X_i}$, the automorphism of $X$ induced by the absolute ($p$-power) Frobenius on $X_i$. Then for any $i \in \{1,\dots,n\}$, the quotient 
\[
X/\Phi := X/\langle \varphi_1,\dots, \widehat{\varphi_i}, \dots,\varphi_n \rangle
\]
exists as a locally spatial small v-sheaf (resp.\ a diamond).
\item[(c)]
If $X_1,\dots,X_n$ are spatial, then $X/\Phi$ is spatial (but $X$ need not be).
\end{enumerate}
\end{lemma}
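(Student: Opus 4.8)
The plan is to reduce the entire statement to the corresponding facts about adic spaces and affinoid perfectoid spaces that are already available in the literature on diamonds and v-sheaves (\cite{s-berkeley, scholze-diamonds}), mirroring the structure of the proof of Lemma~\ref{L:Pfd products}. The key point is that all three assertions are \'etale-local (indeed v-local) on the $X_i$, so one may immediately reduce to the case where each $X_i$ is a \emph{spatial} diamond, or even — using that every spatial diamond admits a surjection from an affinoid perfectoid space and that $\FEt$ and the relevant quotients are v-stacks — to the case where each $X_i = \Spa(A_i, A_i^+)^\diamond$ is the diamond of an affinoid perfectoid space of characteristic $p$. In that case one is exactly in the situation of Lemma~\ref{L:Pfd products}, and the product $X_1 \times \cdots \times X_n$ in the category of v-sheaves is represented by (the diamond of) the perfectoid space constructed there, while $X/\Phi$ is represented by the quasicompact separated perfectoid space $(\prod X_i)/\Phi$.

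First I would verify part (a): the product of small v-sheaves exists because the category of v-sheaves has all limits, and smallness plus local spatiality is inherited because, affinoid-locally, the product is represented by a perfectoid space (Lemma~\ref{L:Pfd products}(a)), which is in particular a spatial diamond; gluing these local models gives a locally spatial small v-sheaf. For the diamond case one uses that a small v-sheaf which is locally (on a v-cover) a diamond is itself a diamond — this is part of the definition/criterion for diamonds in \cite[\S 8.3]{s-berkeley} together with the fact that the property of being locally the quotient of a perfectoid space by a pro-\'etale equivalence relation descends along the v-cover $\prod \Spa(A_i,A_i^+) \to X$. Part (b) then follows by the same reduction: the $\Phi$-action is totally discontinuous on the affinoid-local models by the explicit neighborhood computation in the proof of Lemma~\ref{L:Pfd products}(b) (the neighborhoods defined by $|\varpi_1(x)|^{b_j} \leq |\varpi_j(x)| \leq |\varpi_1(x)|^{a_j}$ with $a_j \leq b_j < pa_j$), so the quotient exists as a perfectoid space locally and glues to a locally spatial small v-sheaf (resp.\ diamond), canonically independent of $i$. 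For part (c), spatiality of $X/\Phi$ follows from the fact, again visible in Lemma~\ref{L:Pfd products}(c), that $X/\Phi$ is covered by finitely many of the above neighborhoods with intersections of the same form, hence is qcqs with a basis of quasicompact open subobjects whose underlying spaces are spectral; one invokes \cite[Proposition~17.2.2]{s-berkeley} or the spatiality criterion of \cite{scholze-diamonds} to package this.

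The main obstacle, I expect, is bookkeeping rather than a genuine mathematical difficulty: one must be careful that the product in $\Pfd$ computed by Lemma~\ref{L:Pfd products} really does represent the product in the larger category of v-sheaves (it does, since $\Pfd \hookrightarrow$ v-sheaves is fully faithful and preserves limits that exist in $\Pfd$, by subcanonicity of the v-topology, \cite[Corollary~17.1.5]{s-berkeley}), and that descending "locally spatial diamond" along a non-representable v-cover is legitimate. The latter is exactly the content of the structure theory in \cite[\S 8.3, \S 17]{s-berkeley} and \cite[\S 11--13]{scholze-diamonds}, so I would simply cite it. One subtlety worth flagging explicitly in the write-up: in the diamond case one wants the $X_i$ to be \emph{locally spatial} diamonds as in our standing convention, and the affinoid perfectoid models $\Spa(A_i, A_i^+)^\diamond$ are spatial diamonds, so no new phenomena arise; the only place non-quasicompactness intrudes is in the product $X$ itself, which is why the statement only claims $X/\Phi$ (not $X$) is spatial in (c).

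\begin{proof}
All three assertions are local on each $X_i$ for the v-topology, and the formation of absolute products, of the quotient $X/\Phi$, and the properties of being a (locally spatial) small v-sheaf or a diamond all descend along v-covers (see \cite[\S 8.3, \S 17]{s-berkeley}, \cite[\S 11--13]{scholze-diamonds}). We may therefore assume that each $X_i$ is a spatial diamond, and then — choosing for each $i$ a surjection from an affinoid perfectoid space of characteristic $p$, which exists because spatial diamonds admit such surjections \cite[\S 11]{scholze-diamonds} — reduce further to the case $X_i = \Spa(A_i, A_i^+)^\diamond$ with $(A_i, A_i^+)$ an affinoid perfectoid pair. Since $\Pfd$ embeds fully faithfully into the category of v-sheaves and this embedding preserves any limits that exist in $\Pfd$ (the v-topology being subcanonical, \cite[Corollary~17.1.5]{s-berkeley}), the product $X := X_1 \times \cdots \times X_n$ in v-sheaves is represented by the perfectoid space constructed in Lemma~\ref{L:Pfd products}(a); this proves (a) in this case, and the general case follows by gluing, using that a small v-sheaf which is v-locally a (spatial) diamond is a (locally spatial) diamond \cite[\S 8.3]{s-berkeley}.

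For (b), in the affinoid case the $\Phi$-action on the perfectoid space $X$ is totally discontinuous by the explicit neighborhood basis exhibited in the proof of Lemma~\ref{L:Pfd products}(b): each point has a neighborhood of the form
\[
\{x \in \Spa(A,A^+): \left| \varpi_1(x) \right|^{b_j} \leq \left|\varpi_j(x)\right| \leq \left| \varpi_1(x) \right|^{a_j} \quad (j=2,\dots,n)\}
\]
with $a_j \leq b_j < p a_j$, whose $\Phi$-translates are pairwise disjoint. Hence $X/\Phi$ exists in $\Pfd$, a fortiori as a small v-sheaf, and again gluing the affinoid-local models over a v-cover of the general $X$ yields the quotient as a locally spatial small v-sheaf (resp.\ diamond), canonically independent of the choice of $i$.

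For (c), when the $X_i$ are spatial we may still reduce the verification of spatiality of $X/\Phi$ to the affinoid case, where it was observed in the proof of Lemma~\ref{L:Pfd products}(c) that $X/\Phi$ is covered by finitely many neighborhoods of the above form with pairwise intersections again of the same form. It is therefore qcqs, and the underlying topological spaces of these quasicompact open subobjects form a neighborhood basis consisting of spectral spaces; by the spatiality criterion for small v-sheaves \cite[Proposition~17.2.2]{s-berkeley} (see also \cite{scholze-diamonds}), $X/\Phi$ is spatial. That $X$ itself need not be quasicompact is already visible in Lemma~\ref{L:Pfd products}.
\end{proof}
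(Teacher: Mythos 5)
Your central reduction is where the argument breaks. You replace each $X_i$ by a v-cover $\Spa(A_i,A_i^+)^\diamond$ and then invoke the principle that ``being a (locally spatial) small v-sheaf or a diamond descends along v-covers,'' i.e.\ that a small v-sheaf which is v-locally a (spatial) diamond is a (locally spatial) diamond. That principle is false as stated: by the very definition of smallness, \emph{every} small v-sheaf admits a surjection from a perfectoid space, hence is ``v-locally a spatial diamond'' in your sense, yet not every small v-sheaf is a diamond or locally spatial --- the whole point of the notion of diamond is that the equivalence relation must be pro-\'etale, and nothing in \cite[\S 8.3]{s-berkeley} or \cite{scholze-diamonds} gives descent of these properties along arbitrary v-surjections. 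What is true is the criterion that a small v-sheaf admitting a \emph{quasi-pro-\'etale} surjection from a perfectoid space is a diamond; so the covers you use would have to be quasi-pro-\'etale, which your write-up never checks, and which is simply unavailable in the ``locally spatial small v-sheaf'' half of the statement, where the chosen covers are genuinely only v-covers. As written, your argument establishes no more than that $X$ and $X/\Phi$ are small v-sheaves.

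The paper's proof is a one-line assertion that the lemma is a formal consequence of Lemma~\ref{L:Pfd products}, and the formality runs through \emph{presentations} rather than descent: after localizing on open (spatial) subobjects of the $X_i$, write $X_i = Y_i/R_i$ with $Y_i$ affinoid perfectoid and $R_i$ a pro-\'etale equivalence relation in the diamond case (resp.\ a presentation as in \cite[Proposition~17.2.2]{s-berkeley} in the v-sheaf case). Then $X$ is the quotient of the perfectoid space $\prod_i Y_i$ of Lemma~\ref{L:Pfd products}(a) by the product relation $\prod_i R_i$, and $X/\Phi$ is a quotient of the perfectoid space $(\prod_i Y_i)/\Phi$ of Lemma~\ref{L:Pfd products}(b); the induced relations are again pro-\'etale (resp.\ of the required qcqs type) because quasi-pro-\'etale morphisms are stable under base change and composition, hence under finite products. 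Equivalently, in the diamond case the product of the quasi-pro-\'etale covers $Y_i \to X_i$ is a quasi-pro-\'etale perfectoid cover of $X$, and the criterion just quoted applies; spatiality in (c) is then read off from the finite cover by the explicit quasicompact neighborhoods in Lemma~\ref{L:Pfd products}(c) through this presentation. Your neighborhood discussion is fine for the perfectoid model itself, but the passage from the model to $X$ and $X/\Phi$ requires this equivalence-relation bookkeeping, not v-descent of diamond-ness. (A minor point: \cite[Proposition~17.2.2]{s-berkeley} is the presentation result, not a spatiality criterion.)
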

\begin{proof}
This is a formal consequence of Lemma~\ref{L:Pfd products}.
\end{proof}

\begin{remark}
Note that if $A$ is a perfectoid $E$-algebra, then $\Spd(A) = \Spd(A^\flat)$ as diamonds; the extra data of the choice of $A$ as an untilt of $A^\flat$ is equivalent to the specification of a structure morphism $\Spd(A^\flat) \to \Spd(E)$. 
Consequently, for any Huber ring $A$ over $E$, by Remark~\ref{R:Pfd product redux} we have an identification
\[
(Y_{L,E} \times_E \Spa(A,A^\circ))^\diamond \cong \Spd(L) \times \Spd(A).
\]
\end{remark}

\begin{remark} \label{R:symmetry}
Continuing with the previous remark, suppose that $F$ is a perfectoid field of characteristic $p$ containing $E$.
From Remark~\ref{R:diamond interpretation}, 
we have an identification
\[
\FEt((X_L \times_E F)^\diamond) \cong \FEt(\Spd(L) \times \Spd(F))/\varphi_L).
\]
On the other hand, as in Definition~\ref{D:Phi quotient}, for $X := \Spd(L) \times \Spd(F)$ 
we may identify the right side with
\[
\FEt(X/\Phi) \cong \FEt(X/\langle \varphi_L, \varphi_E \rangle) \times_{\FEt(X/\varphi_X)} \FEt(X),
\]
in which $L$ and $F$ play symmetric roles.
\end{remark}

Lemma~\ref{L:extend by geometric point} immediately promotes to the following statement.
\begin{lemma} \label{L:extend by geometric point v-sheaf}
For $X$ a small v-sheaf, put $X_L := X \times \Spa(L,L^\circ)$ and let $\varphi_L: X_L \to X_L$ be the pullback of the Frobenius on $L$. Then the base extension functor
\[
\FEt(X) \to \FEt(X_L/\varphi_L)
\]
is an equivalence of categories. 
\end{lemma}
\begin{proof}
This is a formal consequence of Lemma~\ref{L:extend by geometric point}.
\end{proof}

\begin{theorem}[Drinfeld's lemma for v-sheaves] \label{T:drinfeld lemma v-sheaves}
Let $X_1,\dots,X_n$ be connected spatial small v-sheaves. 
\begin{enumerate}
\item[(a)]
For $X := X_1 \times \cdots \times X_n$, the quotient $X/\Phi$ is connected.
\item[(b)]
For any geometric point $\overline{x}$ of $X$, the map
\[
\pi_1^{\prof}(X/\Phi, \overline{x}) \to \prod_{i=1}^n \pi_1^{\prof}(X_i, \overline{x})
\]
is an isomorphism of profinite groups.
\end{enumerate}
\end{theorem}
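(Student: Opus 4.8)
The plan is to run the proof of Theorem~\ref{T:drinfeld lemma perfectoid} again, essentially word for word, with the category $\Pfd$ replaced throughout by the category of spatial small v-sheaves (and everything applies verbatim in the variant where one restricts to diamonds). Every ingredient used in that proof is available in the present setting: Lemma~\ref{L:Pfd products v-sheaves} provides the absolute product $X := X_1 \times \cdots \times X_n$ and the quotient $X/\Phi$, again as a spatial small v-sheaf; Lemma~\ref{L:extend by geometric point v-sheaf} is the exact analogue of Lemma~\ref{L:extend by geometric point}; and, since the underlying topological space of a spatial small v-sheaf is spectral (\cite{scholze-diamonds}) and $\FEt$ is a v-stack, Lemma~\ref{L:same pi1} holds in this context with the same proof.

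First I would reduce to $n = 2$ by induction on $n$, exactly as in the proof of Theorem~\ref{T:drinfeld lemma perfectoid}. Setting $X' := (X_1 \times \cdots \times X_{n-1})/\langle \varphi_2, \dots, \varphi_{n-1} \rangle$, Lemma~\ref{L:Pfd products v-sheaves} guarantees that $X'$ is again a spatial small v-sheaf, the inductive hypothesis (part~(a)) applied to $X_1, \dots, X_{n-1}$ shows that $X'$ is connected, and $X/\Phi \cong (X' \times X_n)/\varphi_n$; so both (a) and (b) follow from the two-factor case applied to $X'$ and $X_n$.

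For $n = 2$, the argument for (a) is that of \cite[Corollary~4.1.22]{kedlaya-aws} and \cite[Proposition~16.3.6]{s-berkeley}: a nontrivial $\varphi_2$-invariant disconnection $U_1 \sqcup U_2$ of $X := X_1 \times X_2$ restricts over each geometric point $\overline{s}$ of $X_2$ to a $\varphi_2$-invariant disconnection of $X_1 \times_{X_2} \overline{s}$, which by Lemma~\ref{L:same pi1} and Lemma~\ref{L:extend by geometric point v-sheaf} is pulled back from $X_1$ and hence has an empty term; writing $U_1 \times_{X_2} s$ over a point $s$ of $X_2$ as the inverse limit of the $X_1 \times_{X_2} V$ over quasicompact open neighborhoods $V$ of $s$ --- an inverse limit of spectral spaces and spectral maps, which by Tikhonov for the constructible topologies is nonempty unless some term is --- one sees that the images of $U_1, U_2$ in $X_2$ form a disconnection of $X_2$, a contradiction. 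For (b) I would run the diagram chase of \cite[Corollary~4.1.22, Corollary~4.1.23]{kedlaya-aws}: the composite $\pi_1^{\prof}(X \times_{X_2} \overline{s}, \overline{x}) \to \pi_1^{\prof}(X/\Phi, \overline{x}) \to \pi_1^{\prof}(X_2, \overline{x}) \to 1$ together with the diagonal to $\pi_1^{\prof}(X_1, \overline{x})$ being an isomorphism (Lemma~\ref{L:same pi1} and Lemma~\ref{L:extend by geometric point v-sheaf}); surjectivity onto each $\pi_1^{\prof}(X_i, \overline{x})$ from part~(a) applied to $X_1 \times X_2'$ for $X_2' \in \FEt(X_2)$, as in \cite[Remark~4.1.10]{kedlaya-aws}; and exactness in the middle by comparing a finite quotient $G$ of $\pi_1^{\prof}(X/\Phi, \overline{x})$ with the Galois quotient $G \to H$ cut out by the canonical Galois cover of $X_2$ from \cite[Proposition~16.3.3]{s-berkeley}, using that $X' \to X_2'$ has geometrically connected fibers so that $\pi_1^{\prof}(X \times_{X_2} \overline{s}, \overline{x}) \to \ker(G \to H)$ is surjective.

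The only non-bookkeeping point, and hence the main obstacle, is to confirm that the foundational facts invoked really do hold in the stated generality: that $\FEt$ on small v-sheaves is a v-stack and that finite \'etale covers of a connected spatial small v-sheaf form a Galois category; that the hypothesis \emph{spatial} (not merely locally spatial) is exactly what makes $|X_i|$ spectral with its points expressible as inverse limits of quasicompact open neighborhoods, so that Lemma~\ref{L:same pi1} and the inverse-limit argument apply to the factors; that passing to a connected finite \'etale cover $X_2'$ of a spatial small v-sheaf preserves spatiality; and that the construction of canonical Galois covers over the factor $X_2$ in \cite[Proposition~16.3.3]{s-berkeley} is available for spatial small v-sheaves. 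All of these are established in \cite{s-berkeley, scholze-diamonds}; granting them, the proof is purely formal.
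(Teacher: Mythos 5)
Your proposal matches the paper's proof: the paper likewise notes that one cannot simply invoke Theorem~\ref{T:drinfeld lemma perfectoid} (a connected spatial small v-sheaf need not be covered by a connected perfectoid space) and instead reruns its proof verbatim, with Lemma~\ref{L:Pfd products v-sheaves} and Lemma~\ref{L:extend by geometric point v-sheaf} supplying the needed inputs. Your additional checklist of foundational facts (spectrality of spatial v-sheaves, $\FEt$ being a v-stack, the Galois cover construction) is exactly the content the paper summarizes as ``adapts without incident,'' so the approaches are essentially identical.
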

\begin{proof}
We cannot directly deduce this from Theorem~\ref{T:drinfeld lemma perfectoid}, because
a connected spatial small v-sheaf may not be covered by a connected perfectoid space.
However, given Lemma~\ref{L:Pfd products v-sheaves} and Lemma~\ref{L:extend by geometric point v-sheaf},
the proof of Theorem~\ref{T:drinfeld lemma perfectoid} adapts without incident.
\end{proof}

\begin{remark} \label{R:v-sheaves to schemes}
There is a fully faithful functor from arbitrary 
perfect schemes over $\FF_p$ to small v-sheaves
\cite[Proposition~18.3.1]{s-berkeley};
however, objects in the essential image of this functor need not be locally spatial,
so we cannot directly apply Theorem~\ref{T:drinfeld lemma v-sheaves} to them.

However, one may recover Theorem~\ref{T:drinfeld schemes} from Lemma~\ref{L:extend by geometric point v-sheaf} as follows. Suppose that $X$ is a perfect scheme over $\FF_p$ with associated v-sheaf $X^\diamond$;
we may still apply Lemma~\ref{L:extend by geometric point v-sheaf} to see that the target of the composition
\[
\FEt(X) \to \FEt(X_L/\varphi_L) \to \FEt((X^\diamond)_L/\varphi_L)
\]
is equivalent to $\FEt(X^\diamond) \cong \FEt(X)$. It is not hard to show that the second functor is fully
faithful, so both functors must be equivalences. This yields Lemma~\ref{L:drinfeld schemes point} for perfect schemes, from which one recovers Lemma~\ref{L:drinfeld schemes point} and Theorem~\ref{T:drinfeld schemes} at full strength
using Remark~\ref{R:drinfeld schemes perfect}.
\end{remark}

\end{document}